\newtheorem{theorem}{Theorem}[section]
\newtheorem{lemma}[theorem]{Lemma}
\newtheorem{corollary}[theorem]{Corollary}
\newtheorem{proposition}[theorem]{Proposition}
\theoremstyle{definition}
\newtheorem{definition}[theorem]{Definition}
\newtheorem{example}[theorem]{Example}
\newtheorem{remark}[theorem]{Remark}
\numberwithin{equation}{section}
\newcommand{\C}{\mathbb{C}}
\newcommand{\N}{\mathbb{N}}
\newcommand{\Z}{\mathbb{Z}}
\renewcommand{\P}{\mathbb{P}}
\newcommand{\R}{\mathbb{R}}
\def\Ascr{\mathscr{A}}
\def\Oscr{\mathscr{O}}
\newcommand{\cA}{\mathcal{A}}
\newcommand{\cH}{\mathcal{H}}
\newcommand{\cP}{\mathcal{P}}
\newcommand{\cT}{\mathcal{T}}
\newcommand{\cU}{\mathcal{U}}
\newcommand{\jgoth}{{\ensuremath{\mathfrak{j}}}}
\newcommand{\igoth}{{\ensuremath{\mathfrak{i}}}}
\newcommand{\IA}{\mathscr I_A(M)}
\newcommand\wt{\widetilde}
\newcommand\hra{\hookrightarrow}
\newcommand\di{\partial}
\newcommand\wh{\widehat}
\newcommand{\ol}{\overline}
\begin{document}
\title
{Null curves and directed immersions of open Riemann surfaces}
\author{Antonio Alarc\'on and Franc Forstneri\v c}
\address{A.\ Alarc\'{o}n, Departamento de Geometr\'{\i}a y Topolog\'{\i}a, 
Universidad de Gra\-na\-da, E-18071 Granada, Spain}
\email{alarcon\,@ugr.es}

\address{F.\ Forstneri\v c, Faculty of Mathematics and Physics, University of Ljubljana, and Institute of Mathematics, Physics and Mechanics, Jadranska 19, 1000 Ljubljana, Slovenia}
\email{franc.forstneric\,@fmf.uni-lj.si}

%
%    General info
%
\subjclass[2000]{Primary: 32E10, 32E30, 32H02, 32Q28;  Secondary: 14H50, 14Q05, 49Q05}

\date{31 March 2013}
\keywords{Riemann surface, complex curve, directed holomorphic immersion, null curve, Oka manifold, Oka principle, Mergelyan's theorem, proper embedding}

%\begin{abstract}
%We study holomorphic immersions of open Riemann surfaces into $\C^n$ whose derivative lies in a conical algebraic subvariety $A$ of $\C^n$ that is smooth away from the origin. Classical examples of such  $A$-immersions include null curves in $\C^3$ which are closely related to minimal surfaces in $\R^3$, and null curves in $SL_2(\C)$ that are related to Bryant surfaces. We establish a basic structure theorem for the set of all $A$-immersions of a bordered Riemann surface, and we prove several approximation and desingularization theorems. Assuming that $A$ is irreducible and is not contained in any hyperplane, we show that every $A$-immersion can be approximated by $A$-embeddings; this holds in particular for null curves in $\C^3$. If in addition $A\setminus \{0\}$ is an Oka manifold, then $A$-immersions are shown to satisfy the Oka principle, including the Runge and the Mergelyan approximation theorems. Another version of the Oka principle holds when $A$ admits a smooth Oka hyperplane section. This lets us prove in particular that every open Riemann surface is biholomorphic to a properly embedded null curve in $\C^3$.
%\end{abstract}

\maketitle
% \tableofcontents

%%%%%%%%%%%%%%%%%%%%%%%%%%%%%%%%%%%%%%%%%%%%%%%%%%%%%%%%%%%
%  																												%		
%   INTRODUCTION 																					%	
%																													%
%%%%%%%%%%%%%%%%%%%%%%%%%%%%%%%%%%%%%%%%%%%%%%%%%%%%%%%%%%%

\section{Introduction} 
\label{sec:Intro}
This paper was motivated by open problems in a classical field of geometry --- {\em null curves} in $\C^3$ and in $SL_2(\C)$. The former are holomorphic immersions $F=(F_1,F_2,F_3)\colon M\to\C^3$ of an open Riemann surface $M$ into $\C^3$ which are directed by the quadric subvariety
\begin{equation}
\label{eq:null}
	A=\bigl\{z = (z_1,z_2,z_3)\in \C^3\colon z_1^2+z_2^2+z_3^2=0 \bigr\},
\end{equation}
in the sense that the derivative $F'=(F'_1,F'_2,F'_3)$ with respect to any local holomorphic coordinate on $M$ has range in $A\setminus \{0\}$. The real and the imaginary part of a null curve are minimal surfaces in $\R^3$; conversely, every simply connected, conformally immersed minimal surface in $\R^3$ is the real part of a null curve in $\C^3$. This connection has strongly influenced the theory of minimal surfaces, supplying this field with powerful tools coming from complex analysis and Riemann surfaces theory. (See \cite{Osserman} for a classical survey of this subject and \cite{MP1,MP2} for recent ones.) Similarly, null curves in $SL_2(\C)$ are holomorphic immersions $M\to SL_2(\C)$ directed by the variety
\begin{equation}
\label{eq:nullSL}
	\left \{ z= \left(\begin{matrix} z_{11} & z_{12} 
	\cr z_{21} & z_{22} \end{matrix} \right) \colon \det z= z_{11}z_{22}-z_{12}z_{21}=0\right \} 
	\subset \C^4.
\end{equation}
The projection of a null curve in $SL_2(\C)$ to the hyperbolic 3-space $\cH^3=SL_2(\C)/SU(2)$ is a {\em Bryant surface} (mean curvature 1 surface); conversely, a simply connected Bryant surface  in $\cH^3$ lifts to a null curve in $SL_2(\C)$ \cite{Br}. See for instance \cite{UY,CHR,Ro} for the background on this topic. 

In spite of the rich literature on null curves, many basic problems remain open. In this paper we invent new methods to solve several of them, not only for null curves, but also for immersions with derivative in an arbitrary conical subvariety $A$ of $\C^n$ which is smooth away from the origin; such directed immersions will be called {\em $A$-immersions} (Def.\ \ref{def:directed}). For convenience we also assume that $A$ is irreducible and is not contained in any hyperplane. We point out that null curves in $\C^3$, and in $\C^n$ for any $n\ge 3$, are a particular case of all our results. Unlike in many papers where results hold only after a deformation of the complex structure on the Riemann surface $M$, usually due to cutting away pieces of the surface (see among others \cite{AL1,AL3}, and \cite{AFM,Al,FMM} for the corresponding problems on minimal surfaces), we always work with a fixed complex structure. 

Our results can be divided in two classes; lacking a better term, we call them local and global ones. The local results pertain to compact bordered Riemann surfaces. We show that the set of all $A$-immersions $M\to\C^n$ contains an open everywhere dense subset connsisting of so called {\em nondegenerate $A$-immersions} (Def.\ \ref{def:nondegenerate}), and this subset is an infinite dimensional complex Banach manifold; see Theorem \ref{th:local}. We also prove that every $A$-immersion can be approximated by $A$-embeddings 
(i.e., injective $A$-immersions); see Theorem \ref{th:desing} and Corollary \ref{co:sl2c-desing}. 

The global results pertain to all open Riemann surfaces, but we assume in addition that $A\setminus\{0\}$ is an {\em Oka manifold} in the sense of \cite{FF:Oka}. (See Sect.\ \ref{sec:Okamanifolds} below.) This condition is completely natural, and its 1-dimensional version, Property $\mathrm{CAP}_1$ (see \cite[Def.\ 5.4.3]{F:book}), is even necessary. This holds in particular for the varieties (\ref{eq:null}), (\ref{eq:nullSL}) controlling null curves, and for any other quadric conical hypersurface in $\C^n$ which is smooth away from the origin. We prove that $A$-immersions can be approximated by $A$-embeddings (Theorem \ref{th:desing2}), and they satisfy the Oka principle (Theorem \ref{th:Oka}), including the Runge and the Mergelyan approximation property (Theorems \ref{th:Mergelyan} and \ref{th:Mergelyan2}). For such $A$, we use these tools to construct proper $A$-embeddings of an arbitrary open Riemann surface into $\C^n$; see Theorem \ref{th:proper}. 

%Our methods lean on two main components. One is the technique of holomorphic sprays, developed in the context of Oka theory, which has proved a powerful tool in a number of nonlinear problems; see \cite{DF,F:book}. We use sprays to find suitable deformations of the derivative $M\to A\setminus \{0\}$ of an $A$-immersion, with control on the periods over a basis of the homology group $H_1(M;\Z)$. The other main inspiration is Gromov's theory of convex integration of partial differential relations \cite{Gromov:PDR}. The global aspects of our work illuminate an intimate link between the theory of directed immersion of Riemann surfaces and modern Oka principle as presented in \cite{F:book}. 

Directed immersions have been studied in many classical geometries (symplectic, contact, totally real, lagrangian, etc.); surveys can be found in the monographs by Gromov \cite{Gromov:PDR} and Eliashberg and Mishachev \cite{EM} (see in particular Chap.\ 19 in the latter). Apart from specific examples such as null curves, ours seems to be the first systematic investigation of this subject in the holomorphic case. Interesting new problems open up, and we point out some of them at the end of the following section.

%%%%%%%%%%%%%%%%%%%%%%%%%%%%%%%%%%%%%%%%%%%%%%%%%%%%%%%%%%%
%  																												%		
%   sec 2: MAIN RESULTS																		%	
%																													%
%%%%%%%%%%%%%%%%%%%%%%%%%%%%%%%%%%%%%%%%%%%%%%%%%%%%%%%%%%%

\section{Main results} 
\label{sec:results}
In this section we present our main results, indicate the methods used in the proof, and mention some interesting problems that our work opens. The organization of the paper is explained along the way.

\begin{definition}\label{def:directed}
Let $A$ be a (topologically) closed conical complex subvariety of $\C^n$. (A subvariety $A$ is conical if  $tA=A$ for every $t\in \C^*=\C\setminus\{0\}$.) A holomorphic immersion $F=(F_1,\ldots, F_n) \colon M\to \C^n$ of an open Riemann surface $M$ to $\C^n$ is said to be {\em directed by $A$}, or an {\em $A$-immersion}, if its complex derivative $F'$ with respect to any local holomorphic coordinate on $M$ assumes values in $A\setminus \{0\}$. Likewise, $F$ is said to be an {\em $A$-embedding} if it is an injective $A$-immersion.
\end{definition}

By Chow's theorem, every closed conical complex subvariety of $\C^n$ is algebraic and is the common zero set of finitely many homogeneous holomorphic polynomials; see e.g.\ \cite[p.\ 73]{Chirka}.

Definition \ref{def:directed} also applies if $M$ is a bordered Riemann surface with smooth boundary $bM\subset M$ and $F\colon M\to\C^n$ is a map of class $\mathscr C^1(M)$ (i.e., continuously differentiable on $M$) which is holomorphic in the interior 
$\mathring M=M\setminus bM$.

Pick a nowhere vanishing holomorphic 1-form $\theta$ on $M$.  (Such a 1-form exists by the Oka-Grauert principle, see   \cite{Gra1,Gra2,Gra3} or Theorem 5.3.1 in \cite[p.\ 190]{F:book}. If $M$ is a compact bordered Riemann surface, we can choose $\theta$ to be smooth on $M$ and holomorphic in the interior.) Then we can write $dF=f\theta$, where $f=(f_1,\ldots,f_n)\colon M\to \C^n$ is a holomorphic map. Clearly $F$ is an $A$-immersion if and only if $f=dF/\theta$ maps $M$ to $A\setminus\{0\}$. The specific choice of the 1-form $\theta$ does not matter since $A$ is conical. 

We shall always assume that $A\setminus \{0\}$ is smooth (non-singular). Without loss of generality we also assume that $M$, and the submanifold $A\setminus \{0\}\subset \C^n$, are connected (so the variety $A$ is irreducible), and that $A$ is not contained in any hyperplane of $\C^n$. These conditions imply that $n\ge 3$ (since the only irreducible complex cones in $\C^2$ are complex lines).

We denote by $\IA$ the set of all $A$-immersions of an open Riemann surface $M$ to $\C^n$. If $M$ is a compact bordered Riemann surface with smooth boundary $\emptyset\ne bM \subset M$, we denote by $\mathscr A^r(M,\C^n)$ the set of all maps $M\to \C^n$ of class $\mathscr C^r$ $(r\in \Z_+)$  that are holomorphic on $\mathring M$, and by  
${\mathscr I}_A(M) \subset \Ascr^1(M,\C^n)$ the set of all $A$-immersions $M\to \C^n$ of class $\mathscr C^1$ that are holomorphic on 
$\mathring M$. The space $\IA$ is naturally endowed with the topology associated to the $\mathscr C^1$ maximum norm on $M$ (see Sec.\ \ref{sec:prelim}).

The following notion will play an important role in our analysis.

\begin{definition}
\label{def:nondegenerate}
An $A$-immersion $F\colon M\to \C^n$ is said to be {\em nondegenerate} if the linear span of the tangent spaces $T_{f(x)}A$, $x\in M$, equals $\C^n$; otherwise $F$ is said to be {\em degenerate}. (Here $dF=f\theta$ as above.)
\end{definition}

It is immediate that nondegenerate $A$-immersions of a compact bordered Riemann surface $M$ form an open subset of the space $\IA$. Our first result concerns the local structure of $\IA$.

\begin{theorem} 
\label{th:local}
Let $M$ be a compact bordered Riemann surface, and let $A$ be an irreducible closed conical subvariety of $\C^n$ $(n\ge 3)$ which is not contained in any hyperplane and such that $A\setminus \{0\}$ is smooth.  Then the following hold: 
\begin{itemize}
\item[\rm(a)]
	Every $A$-immersion $F\in \IA$ can be approximated in the ${\mathscr C}^1(M)$ topology by non\-de\-generate $A$-immersions.
\item[\rm (b)] 
The set of all nondegenerate $A$-immersions $M\to\C^n$ is a complex Banach manifold.
\item[\rm (c)] If $M$ is a smoothly bounded compact domain in a Riemann surface $R$, then every $F\in \IA$ can be approximated in the ${\mathscr C}^1(M)$ topology by $A$-immersions defined on small open neighborhoods of $M$ in $R$.
\end{itemize}
\end{theorem}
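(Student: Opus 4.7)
The common thread in all three parts is to encode an $A$-immersion $F$ by its derivative $f = dF/\theta \colon M \to A \setminus \{0\}$ together with the basepoint value $F(x_0) \in \C^n$, subject to the period constraints $\int_{\gamma_j} f\theta = 0$ for a basis $\gamma_1, \dots, \gamma_m$ of $H_1(M;\Z)$. Two preliminaries should be put in place at the outset: first, because $A \setminus \{0\}$ is a closed complex submanifold of the Stein manifold $\C^n \setminus \{0\}$, Docquier--Grauert provides a holomorphic retraction $\rho \colon U \to A \setminus \{0\}$ from an open neighborhood $U \supset A \setminus \{0\}$; second, composing the constant coordinate vector fields $\partial/\partial z_j$ on $\C^n$ with the differential of $\rho$ along $A \setminus \{0\}$ produces finitely many global holomorphic vector fields $V_1, \dots, V_n$ tangent to $A \setminus \{0\}$ whose values span $T_a A$ at every $a$, and whose joint image sweeps out $\C^n$ because $A$ lies in no hyperplane.

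For part (b), I would set up the Banach manifold structure via the period map. Let $\cB$ denote the complex Banach space of maps $M \to \C^n$ of class $\mathscr C^1(M)$ that are holomorphic on $\mathring M$. Using $\rho$, the subset $\cB_A = \{ f \in \cB \colon f(M) \subset A \setminus \{0\} \}$ is a closed complex Banach submanifold with tangent space $T_f \cB_A = \{h \in \cB \colon h(x) \in T_{f(x)} A\ \forall x\}$. The holomorphic period map $P \colon \cB_A \to (\C^n)^m$, $P(f) = \bigl(\int_{\gamma_j} f\theta\bigr)_j$, has differential $dP_f(h) = \bigl(\int_{\gamma_j} h\theta\bigr)_j$; when $f$ is nondegenerate this is surjective, because any prescribed target period-vector is produced by a finite combination of bump-localized sections $x \mapsto \chi_i(x) V_k(f(x))$, exploiting both the spanning condition $\Span \bigcup_x T_{f(x)} A = \C^n$ and the freedom to position the cutoffs $\chi_i$ against the $\gamma_j$. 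The implicit function theorem then identifies the nondegenerate $A$-immersions (modulo the $\C^n$ of basepoint translations) with an open subset of the Banach submanifold $P^{-1}(0) \subset \cB_A$.

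For part (a), take $F \in \IA$ with derivative $f$, and build a finite-parameter holomorphic family $f_t \colon M \to A \setminus \{0\}$ with $f_0 = f$ by iteratively composing time-$t_k h_k(x)$ flows of the vector fields $V_k$, with holomorphic coefficient functions $h_k$ on $M$. The parameters are chosen in two waves: enough so that the vectors $\{h_k(x_\ast) V_k(f(x_\ast))\}$ at some $x_\ast \in M$ span $\C^n$, forcing $f_t$ to be nondegenerate for small $t \neq 0$; and additional parameters chosen so that the differential at $t = 0$ of $t \mapsto P(f_t)$ is surjective onto $(\C^n)^m$, exactly as in (b). The implicit function theorem produces a Banach submanifold of period-killing parameter values through $t=0$, and integrating $f_t \theta$ from a basepoint for any nondegenerate $t$ in it gives a nondegenerate $A$-immersion arbitrarily close to $F$ in $\mathscr C^1(M)$.

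For part (c), I would first reduce via (a) to the case that $F$ is nondegenerate, then apply componentwise Mergelyan approximation to produce $\widetilde f \in \cO(V, \C^n)$ on an open neighborhood $V \supset M$ in $R$, $\mathscr C^0$-close to $f$ on $M$. For sufficiently close approximation $\widetilde f(M) \subset U$, so $g := \rho \circ \widetilde f$ is a holomorphic map from a smaller neighborhood of $M$ to $A \setminus \{0\}$, close to $f$ on $M$; but the periods $P(g)$ will generally be small and nonzero. Killing these residual periods is the main technical obstacle: I apply the spray-and-IFT construction of (a) to $g$, which remains nondegenerate since $f$ is and the approximation is close, to find nearby parameters making all periods vanish, while keeping the map holomorphic on a (possibly shrunken) neighborhood of $M$. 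Integrating the corrected $g\theta$ from a basepoint yields the required $A$-immersion $\widetilde F$ defined near $M$. Throughout, nondegeneracy is precisely what powers the period-closing step, which is why (a) is positioned as a prerequisite for (b) and (c).
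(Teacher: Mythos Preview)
For parts (b) and (c) your approach matches the paper's in substance: build a spray of deformations via flows of tangent vector fields, prove submersivity of the period map at nondegenerate $f$, and invoke the implicit function theorem. One point is glossed over: the ``bump-localized sections'' $\chi_i(x)V_k(f(x))$ are not holomorphic, hence not in $T_f\cB_A$. The paper handles this explicitly by first constructing smooth bumps supported on the generating loops $C_j$, then approximating them via Mergelyan by holomorphic functions on $M$; submersivity is an open condition and survives.

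Part (a), however, has a genuine gap. Your claim that ``the vectors $\{h_k(x_\ast) V_k(f(x_\ast))\}$ at some $x_\ast$ span $\C^n$'' is impossible whenever $\dim A < n$: each $V_k$ is tangent to $A$, so $V_k(f(x_\ast)) \in T_{f(x_\ast)}A$, a proper subspace of $\C^n$. Worse, your ``second wave'' of parameters also fails for a degenerate $f$: since $V_k(f(x)) \in T_{f(x)}A \subset \Sigma(f)$ for every $x$, each period derivative $\int_{\gamma_j} h_k\, V_k(f)\,\theta$ lies in $\Sigma(f)$, so the differential of $t \mapsto P(f_t)$ at $t=0$ has image contained in $\Sigma(f)^m \subsetneq (\C^n)^m$ and is not surjective. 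The implicit function theorem therefore does not produce the period-zero submanifold you need, and there is a circularity: your period-closing machinery presupposes exactly the nondegeneracy you are trying to establish.

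The paper's route is different. It increases $\dim\Sigma(f)$ one step at a time. Given a degenerate $f$, it sets $A' = \{z \in A : T_zA \subset \Sigma(f)\}$, a proper subvariety of $A$ containing $f(M)$, picks a single tangent field $V$ not everywhere tangent to $A'$ along $f(M)$, and flows by $V$ with multiplier $g\cdot h$, where $h$ vanishes at anchor points $x_1,\dots,x_k$ whose tangent spaces already span $\Sigma(f)$ (so these are preserved). The decisive point is that the period-killing step uses no submersivity: the parameter $g$ ranges over the infinite-dimensional space $\Ascr(M)$, so the holomorphic map $g \mapsto \cP(\Phi(g))$ into the finite-dimensional target $(\C^n)^l$ has positive-dimensional fibre through $0$ and hence contains a nonconstant $g$ arbitrarily close to $0$. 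This infinite-dimensional fibre argument is precisely what a finite-parameter spray cannot supply at a degenerate $f$.
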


Observe that $\IA$ is always nonempty as it contains immersions of the form $M \ni x \mapsto z g(x)\in\C^n$, where $z\in A\setminus \{0\}$ and $g$ is a holomorphic function without critical points on $M$ (see Gunning and Narasimhan \cite{GN}). Immersions of this form are obviously degenerate.
 
Theorem \ref{th:local} is proved in Sec.\ \ref{sec:local}. In relation to part (b), we can add that the set of all nondegenerate $A$-immersions is a {\em split complex Banach submanifold} of the Banach space $\mathscr A^1(M,\C^n)$; see Remark \ref{rem:split}.
 
The following desingularization result is new even for null curves in $\C^3$.

\begin{theorem}
\label{th:desing}
Let $M$ be a compact bordered Riemann surface. If $A\subset \C^n$ $(n\ge 3)$ is a subvariety as in Theorem \ref{th:local}, then every $A$-immersion $M\to\C^n$ can be approximated in the ${\mathscr C}^1(M)$ topology by $A$-embeddings. 
\end{theorem}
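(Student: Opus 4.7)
My strategy is a parametric transversality argument on the Banach manifold of nondegenerate $A$-immersions. Let $F\in\mathscr{I}_A(M)$. By Theorem \ref{th:local}(a) I may assume $F$ is nondegenerate, and by Theorem \ref{th:local}(b) I work near $F$ in the complex Banach manifold of nondegenerate $A$-immersions. Write $dF=f\theta$ with $f\colon M\to A\setminus\{0\}$. The heart of the proof is to build a holomorphic family $\{F_t\}_{t\in B}$ of nondegenerate $A$-immersions, parameterized by a ball $B\subset\C^N$, with $F_0=F$, such that the difference map
\[
\Phi\colon (M\times M)\times B\to\C^n,\qquad \Phi(x,y,t)=F_t(x)-F_t(y)
\]
has surjective partial differential $\partial_t\Phi|_{(x,y,0)}\colon\C^N\to\C^n$ for every pair $(x,y)\in M\times M$ with $x\neq y$.

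To construct such a dominating spray, I exploit nondegeneracy: since $A\setminus\{0\}$ is smooth and the tangent spaces $T_{f(x)}A$ collectively span $\C^n$, I can choose finitely many holomorphic vector fields $V_1,\ldots,V_k$ on $\C^n$ tangent to $A\setminus\{0\}$ whose values span $T_zA$ at each $z\in f(M)$. For small $s\in\C$ the time-$s$ flow $\phi_j^s$ of $V_j$ preserves $A\setminus\{0\}$. Given holomorphic weight functions $h_1,\ldots,h_N\in\mathcal O(M)$ and parameters $t=(t_1,\ldots,t_N)$, set
\[
f_t(x)=\phi_{\iota(N)}^{t_N h_N(x)}\circ\cdots\circ\phi_{\iota(1)}^{t_1 h_1(x)}\bigl(f(x)\bigr)\in A\setminus\{0\}
\]
for some index map $\iota\colon\{1,\ldots,N\}\to\{1,\ldots,k\}$, and $F_t(x)=F(x_0)+\int_{x_0}^x f_t\,\theta$, provided $f_t\theta$ is exact. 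The obstruction to exactness is the $\C^g$-valued period map $\mathcal P(t)$ along a basis of $H_1(M;\Z)\cong\Z^g$. Choosing $h_j$ so that $\mathcal P$ is a submersion in $t$ at $t=0$ (possible by the spanning property), I reserve $g$ of the parameters and solve $\mathcal P(t)=0$ via the implicit function theorem; the remaining parameters, with appropriately chosen weights, deliver dominance of $\Phi$. By compactness of the off-diagonal set $K=(M\times M)\setminus U$ (for $U$ a small neighborhood of $\Delta_M$), a finite $N$ suffices.

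The transversality step is then routine. On the neighborhood $U$ of $\Delta_M$, $F$ is an immersion, so $F_t$ is injective on $U$ for $t$ sufficiently small. On the compact complement $K$, $\Phi$ is a submersion onto $\C^n$, hence transverse to $\{0\}$. By the finite-dimensional Sard theorem applied to $\Phi|_{K\times B}$, the set of $t\in B$ for which $\Phi(\cdot,\cdot,t)$ is transverse to $\{0\}$ on $K$ is dense in $B$. Since $\dim_\R K=4$ while $\mathrm{codim}_\R\{0\}=2n\geq 6$, transversality forces $\Phi(\cdot,\cdot,t)^{-1}(0)\cap K=\emptyset$. Thus for a dense set of arbitrarily small $t\in B$, $F_t$ is an $A$-embedding approximating $F$ in $\mathscr{C}^1(M)$. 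The principal technical obstacle is the spray construction itself: simultaneously annihilating the periods and retaining dominance of the difference map is delicate, and depends essentially on nondegeneracy to supply enough vector fields tangent to $A$ that separate the required directions in $\C^n$.
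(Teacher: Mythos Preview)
Your overall strategy---a finite-dimensional spray of $A$-immersions built from flows of tangent vector fields, period correction by the implicit function theorem, then parametric transversality off the diagonal---is exactly the paper's approach. Two points need sharpening.

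First, a dimension slip: the period map is $(\C^n)^g$-valued, not $\C^g$-valued (each loop gives a vector in $\C^n$), so you must reserve $gn$ parameters, not $g$, to make $\cP$ a submersion.

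Second, and more substantively, the sentence ``the remaining parameters, with appropriately chosen weights, deliver dominance of $\Phi$'' hides the real work, as you yourself flag at the end. The issue is that the implicit function $\rho$ solving $\cP(t',\rho(t'))=0$ feeds back into the spray, and the derivative of $\Phi$ in the free parameters $t'$ involves $\partial\rho/\partial t'$; there is no a~priori reason this correction does not swamp the intended contribution to $F_t(y)-F_t(x)$. The paper's resolution is a \emph{localization} argument carried out one pair $(p,q)$ at a time (Lemma~\ref{lem:pq}): choose an arc $\Lambda$ from $p$ to $q$ disjoint from the homology basis loops $C_1,\ldots,C_l$, take the smooth weight functions $h_i$ supported on $\Lambda$ and identically zero on the $C_j$, and then Mergelyan-approximate the $h_i$ by holomorphic $g_i$ to within a small $\epsilon>0$. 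Because $|g_i|<\epsilon$ on the loops, the periods of the deformed map are $O(\epsilon|\zeta|)$, hence $\rho(\zeta)=O(\epsilon|\zeta|)$ as well; consequently the period-corrected family differs from the uncorrected one by $O(\epsilon|\zeta|)$ on $\Lambda$, and for $\epsilon$ small the $\zeta$-derivatives of $F_\zeta(q)-F_\zeta(p)=\int_\Lambda(\cdots)$ remain linearly independent. Compactness of the off-diagonal set then lets you compose finitely many such local sprays. Without this localization-plus-estimate step, your proposal does not yet close.
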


Theorem \ref{th:desing} is proved in Sec.\ \ref{sec:desing} by using the transversality theorem. To construct a submersive family of $A$-immersions to which Sard's lemma applies, we use methods developed in the proof of Theorem \ref{th:local}, but with rather precise estimates. 

Theorem \ref{th:desing} and the results of \cite{AL1} immediately give complete properly embedded null curves in any convex domain of $\C^3$; see Corollary \ref{co:CY} in Sec.\ \ref{sec:desing} for a brief discussion of this result.

We now pass on to the global results, assuming in addition that $A\setminus\{0\}$ is an {\em Oka manifold}. (See Def.\ \ref{def:Oka} in Sec.\ \ref{sec:Okamanifolds} below, 
%{\color{red} 
where we also recall some sufficient geometric conditions for this property.) 
The quadric (\ref{eq:null}) controlling null curves is Oka. We mention in particular that $A\setminus \{0\}$ is Oka if and only if the projection $A_\infty \subset \C\P^{n-1}$ of $A$ to the hyperplane at infinity is Oka; see Proposition \ref{prop:Oka}. 

The following analogue of Theorem  \ref{th:desing} is proved in Sec.\ \ref{sec:desing}.

\begin{theorem}
\label{th:desing2}
Let $M$ be an open Riemann surface, and let $A\subset \C^n$ be a closed conical subvariety as in Theorem \ref{th:local}. If $A\setminus \{0\}$ is an Oka manifold, then every $A$-immersion $M\to\C^n$ can be approximated uniformly on compacts by $A$-embeddings. 

In particular, every immersed null curve in $\C^n$, $n\ge 3$, can be approximated by embedded null curves.
\end{theorem}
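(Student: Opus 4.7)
The plan is to exhaust $M$ by a sequence of smoothly bounded, Runge, compact bordered Riemann surfaces $M_1 \Subset M_2 \Subset \cdots$ with $\bigcup_j M_j = M$ (such exhaustions exist on every open Riemann surface since they are Stein), and to build inductively a sequence of $A$-immersions $F_j$, defined on neighborhoods of $\overline{M_{j+1}}$, that converge in the $\mathscr{C}^1$-topology on compacts to an $A$-embedding $\widetilde F\colon M\to\C^n$ with $\widetilde F$ arbitrarily close to $F$ in $\mathscr{C}^1(M_1)$.

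Fix a summable sequence $\epsilon_j>0$ with $\sum_j\epsilon_j<\epsilon/2$ and set $F_0=F$. The inductive step constructing $F_j$ from $F_{j-1}$ has two substeps. First, using the Runge/Mergelyan-type approximation theorem for $A$-immersions (Theorem \ref{th:Mergelyan}), whose proof rests on the Oka hypothesis on $A\setminus\{0\}$, I approximate $F_{j-1}$ in $\mathscr{C}^1(M_j)$ by an $A$-immersion $\widetilde F_j$ defined on a neighborhood of $\overline{M_{j+1}}$; this is possible because $M_j$ is Runge in $M_{j+1}$. Second, I apply Theorem \ref{th:desing} on the compact bordered Riemann surface $M_{j+1}$ to approximate $\widetilde F_j$ arbitrarily well in $\mathscr{C}^1(M_{j+1})$ by an actual $A$-embedding $F_j$. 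Composing the two, and shrinking the error at will, I ensure $\|F_j-F_{j-1}\|_{\mathscr{C}^1(M_j)}<\epsilon_j$.

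The quantitative input needed to pass to the limit is the following stability statement: for any $A$-embedding $g\colon K\to\C^n$ of a compact set $K$ in a Riemann surface, there exists $\delta=\delta(g,K)>0$ such that every $\mathscr{C}^1$-map $h$ on a neighborhood of $K$ satisfying $\|h-g\|_{\mathscr{C}^1(K)}<\delta$ is again an injective immersion on $K$; this is elementary, combining the injectivity margin of $g$ on $K\times K\setminus\Delta$ with the non-vanishing of $dg$. Consequently, at step $j$ I further require each subsequent $\epsilon_\ell$ ($\ell>j$) to be smaller than $2^{-\ell}\delta(F_j,M_j)$. With these constraints, the limit $\widetilde F:=\lim_jF_j$ exists in $\mathscr{C}^1$ on compacts, is an $A$-immersion on all of $M$, and is injective on each $M_k$; since the $M_k$ exhaust $M$, it is globally injective, hence an $A$-embedding.

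The main obstacle is precisely this balancing of competing demands at every stage: achieving $\mathscr{C}^1$-closeness to $F_{j-1}$ on $M_j$, injectivity on $M_{j+1}$, and leaving enough future slack to preserve the injectivity margins accumulated on all previous $M_\ell$ ($\ell<j$). This is handled by the standard diagonal trick sketched above: choose $\epsilon_j$ at stage $j$ after having seen $F_{j-1}$, bounded by both the prescribed summable tolerance and the injectivity thresholds $\delta(F_\ell,M_\ell)$ for $\ell<j$. The Oka assumption on $A\setminus\{0\}$ enters only through the Mergelyan step; Theorem \ref{th:desing} does the desingularization. The last assertion follows since the null quadric \eqref{eq:null} (and any smooth quadric cone in $\C^n$) has $A\setminus\{0\}$ Oka, as stated in Section \ref{sec:results}.
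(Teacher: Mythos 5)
Your argument is correct and follows essentially the same strategy as the paper: exhaust $M$ by smoothly bounded compact Runge domains, alternately apply the desingularization Theorem \ref{th:desing} on compact bordered surfaces and the Oka-dependent Runge/Mergelyan approximation (Corollary \ref{cor:Runge}, Theorem \ref{th:Mergelyan}), and choose the tolerances $\epsilon_j$ small enough, invoking Cauchy estimates, so that the embedding property established on each $M_j$ persists in the limit. The only difference is the order of the two substeps within each stage --- the paper first desingularizes on $M_j$ and then extends to $M$ via Corollary \ref{cor:Runge}, whereas you first extend from $M_j$ to $M_{j+1}$ and then desingularize on $M_{j+1}$ --- but this reordering is immaterial.
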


Next we describe the {\em Oka principle for directed immersions}. 

Recall that a compact set $K$ in a complex manifold $M$ is said to be {\em $\Oscr(M)$-convex} if $K$ equals its holomorphically convex hull 
\[
	\wh K=\{x\in M\colon |f(x)| \le \sup_K |f|\ \ \forall f\in \Oscr(M)\}. 
\]
If $M$ is a Stein manifold (for instance, an open Riemann surface), then $K=\wh K$ implies the Runge theorem, also called the Oka-Weil theorem in this setting: Every holomorphic function in a neighborhood of $K$ can be approximated, uniformly on $K$, by functions holomorphic on $M$. (See e.g.\ \cite{Hormander-SCV}.) For this reason we shall also call such a set $K$ a {\em Runge set} in $M$.

\begin{theorem}
\label{th:Oka}
{\rm (The Oka principle for $A$-immersions.)} 
Let $M$ be an open Riemann surface, and let $A\subset \C^n$ $(n\ge 3)$ be a closed conical subvariety as in Theorem \ref{th:local}. Assume in addition  that $A\setminus \{0\}$ is an Oka manifold (see Def.\ \ref{def:Oka} below). Fix a nowhere vanishing holomorphic $1$-form $\theta$ on $M$. 

Every continuous map $f\colon M\to A\setminus\{0\}$ is homotopic to a holomorphic map $\tilde f\colon M\to A\setminus\{0\}$ such that $\tilde f\theta=d\wt F$ is the differential of an $A$-immersion $\wt F\colon M\to \C^n$. Furthermore, if $f\theta=dF$ is the differential of an $A$-immersion $F$ on an open  neighborhood of a compact Runge set $K\subset M$, then the A-immersion $\wt F$ can be chosen to approximate $F$ uniformly on $K$. 
\end{theorem}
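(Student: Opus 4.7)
The plan is to combine the Oka property of $A\setminus\{0\}$ with a period-killing argument via holomorphic sprays, iterated along a Runge exhaustion of $M$.

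First I would apply the Oka principle for maps into the Oka manifold $A\setminus\{0\}$: since $M$ is Stein, every continuous map $f\colon M\to A\setminus\{0\}$ is homotopic to a holomorphic map, and in the approximation version of the principle this holomorphic map may be taken close to $f$ on any compact Runge set on which $f$ is already holomorphic (here $f\theta=dF$ near $K$). Call the resulting holomorphic map $g\colon M\to A\setminus\{0\}$. The remaining issue is that $g\theta$ is generally not exact; one must further perturb $g$, within holomorphic maps $M\to A\setminus\{0\}$, to arrange that all periods
\[
\cP_\gamma(g)\;=\;\int_\gamma g\theta,\qquad \gamma\in H_1(M;\Z),
\]
vanish, while preserving the approximation on $K$ and the homotopy class.

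Next I would construct \emph{period-dominating sprays}. Because $A$ is irreducible, $A\setminus\{0\}$ is smooth, and $A$ is not contained in any hyperplane, the tangent spaces $T_zA$ span $\C^n$ on a nonempty Zariski-open subset of $A\setminus\{0\}$. Pick holomorphic vector fields $V_1,\ldots,V_N$ on $\C^n$ that are tangent to $A$ along $A\setminus\{0\}$ (the Euler field $z\mapsto z$ always is, and more can be produced from local trivializations of the smooth manifold $A\setminus\{0\}$ glued by a Cartan/Cousin argument). For given loops $\gamma_1,\ldots,\gamma_m$ and holomorphic coefficients $h_{j,k}\in\Oscr(M)$, form the composed spray
\[
G(x,t)=\phi^{\sum_k t_{1,k}h_{1,k}(x)}_{V_1}\circ\cdots\circ\phi^{\sum_k t_{N,k}h_{N,k}(x)}_{V_N}\bigl(g(x)\bigr),
\]
where $\phi^s_{V_j}$ is the (locally defined) flow of $V_j$. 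Then $G(\cdot,0)=g$ and $\partial_{t_{j,k}}G|_{t=0}=h_{j,k}\,(V_j\circ g)$. Using the generic spanning of $T_zA$ along $g$ together with Runge approximation on $M$ (to choose $h_{j,k}$ that mimic functionals dual to a chosen basis of $(\C^n)^m$ on the cycles), the period differential
\[
\Bigl(\partial_{t_{j,k}}\cP_{\gamma_\ell}(G_t)\Bigr)\Bigl|_{t=0}\;=\;\Bigl(\int_{\gamma_\ell} h_{j,k}\,(V_j\circ g)\,\theta\Bigr)_{j,k,\ell}
\]
can be made surjective onto $(\C^n)^m$. The holomorphic implicit function theorem then yields $t^*$ close to $0$ with $\cP_{\gamma_\ell}(G(\cdot,t^*))=0$ for $\ell=1,\ldots,m$; for $t^*$ small the map $G(\cdot,t^*)$ remains close to $g$, and hence to $dF/\theta$, on $K$.

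Finally I would iterate this along a normal Runge exhaustion $K\subset L_1\subset L_2\subset\cdots$ of $M$ by smoothly bounded compact domains. At stage $j$, given an $A$-immersion $F_j$ defined on a neighborhood of $L_j$ approximating $F$ on $K$, use Mergelyan approximation (Theorem \ref{th:Mergelyan}) to approximate $dF_j/\theta$ on $L_j$ by a holomorphic map $g_{j+1}\colon L_{j+1}\to A\setminus\{0\}$ in the correct homotopy class, then apply the spray construction on $L_{j+1}$ to a finite set of loops generating $H_1(L_{j+1};\Z)$ to kill all periods of $g_{j+1}\theta$. Integrate to obtain an $A$-immersion $F_{j+1}$ on $L_{j+1}$. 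With errors in geometric progression, the $F_j$ converge uniformly on compacts to the desired $A$-immersion $\wt F$, and $\tilde f=d\wt F/\theta$ is holomorphic into $A\setminus\{0\}$ and homotopic to $f$.

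The main obstacle is the construction of the period-dominating spray — producing enough holomorphic vector fields tangent to $A\setminus\{0\}$, and enough free holomorphic coefficients on $M$, to render the period differential surjective on any preassigned finite collection of cycles. This is exactly where the hypotheses that $A\setminus\{0\}$ is smooth, irreducible, Oka, and that $A$ is not contained in a hyperplane enter in an essential way. Once such sprays are available, the Runge/Mergelyan exhaustion scheme is now-standard technology in the Oka theory of directed holomorphic immersions.
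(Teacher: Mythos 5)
Your overall architecture — Oka principle for maps into $A\setminus\{0\}$, period-dominating sprays built from flows of tangential vector fields, and an inductive scheme along a Runge exhaustion — is essentially the same as the paper's, and Lemma~\ref{lem:deformation} in the paper is exactly the period-dominating spray you sketch. However, there is a concrete gap in the way you handle changes of topology, which is precisely the point at which the paper invokes the convex integration lemma (Lemma~\ref{lem:convex}).

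Your implicit-function-theorem step kills periods only if they are already small, since you solve $\cP_{\gamma_\ell}(G(\cdot,t^*))=0$ for $t^*$ near $0$. On the initial step this is fine, because you start from $g$ close to $dF/\theta$ on a neighborhood of $K$, so the periods over loops inside that neighborhood are small. But when you pass from $L_j$ to $L_{j+1}$ and a new handle is attached — i.e., $H_1(L_{j+1};\Z)$ strictly contains $H_1(L_j;\Z)$ — the holomorphic map $g_{j+1}$ you produce by Oka/Mergelyan approximation of $dF_j/\theta$ is only controlled on $L_j$; its period over the \emph{new} cycle is a priori arbitrary, not small, so the spray (which only moves periods locally in parameter space) cannot reach zero. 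The paper resolves this by splitting the inductive step into a noncritical case (Lemma~\ref{lem:noncritical}) and a critical case: in the critical case one first attaches the stable arc $C$ of the new handle and extends the map $f$ continuously over $C$ with values in $A\setminus\{0\}$ so that $\int_C f\theta$ is approximately the required value $F(q_2)-F(q_1)$; the possibility of doing this is exactly Lemma~\ref{lem:convex}, which uses that the convex hull of $A$ is $\C^n$ (Lemma~\ref{lem:convexhull}). Only after this convex-integration step is the approximate period small enough for the implicit-function argument to finish the job. You should incorporate an analogue of this: without it, the inductive step fails as soon as topology is added.

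A second, minor, issue: you cite Theorem~\ref{th:Mergelyan} as a tool inside the induction, but in the paper Theorem~\ref{th:Mergelyan}(b) and Theorem~\ref{th:Oka} are proved simultaneously by the same induction; only Theorem~\ref{th:Mergelyan}(a) (the local version in small neighborhoods of admissible sets) can be used as a prior ingredient. So either restrict to the local Mergelyan step, or make explicit that you are running both inductions in tandem, to avoid circularity.
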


Theorem \ref{th:Oka} is proved in Sec.\ \ref{sec:Oka}. In the proof we combine the Oka principle for maps to Oka manifolds (see Theorem \ref{th:OkaP}) with the technique of controlling the periods, developed in the proof of Theorem \ref{th:local}. The Oka property of $A\setminus \{0\}$ allows us to deform $f$ to a holomorphic map $\tilde f\colon M\to A\setminus\{0\}$. The main nontrivial point is to show that this map can be chosen such that the 1-form $\tilde f\theta$ has vanishing periods, i.e., it is exact. This is done inductively by passing through sublevel sets of a strongly subharmonic Morse exhaustion function. The control of the periods is provided by Lemma \ref{lem:convex}.
%  whose proof is close in spirit to Gromov's convex integration lemma.   

If $D$ is a closed Jordan domain in an open Riemann surface $M$, then $M$ is obtained by successively adding $0$- and $1$-handles to $D$. It follows that any continuous map $D\to X$ to a connected manifold $X$ extends to a continuous map $M\to X$. In the context of Theorem \ref{th:Oka}, with $X=A\setminus \{0\}$ and $D$ a suitable open neighborhood of the compact Runge set $K$ in $M$, we obtain the following corollary to Theorem \ref{th:Oka}.

\begin{corollary}
\label{cor:Runge}
{\em (Runge theorem for $A$-immersions.)}
Let $M$ and $A\subset \C^n$ be as in Theorem \ref{th:Oka}. Assume that $K$ is a compact Runge set in $M$ and $F\colon U\to \C^n$ is an $A$-immersion on an open neighborhood of $K$. Then $F$ can be approximated uniformly on $K$ by $A$-immersions $M\to \C^n$.
\end{corollary}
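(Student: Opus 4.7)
The plan is to reduce the corollary directly to Theorem~\ref{th:Oka} via a purely topological extension step, exactly as foreshadowed in the paragraph immediately preceding the corollary. The content lies entirely in the Oka principle for $A$-immersions; here we only need to produce a globally defined continuous map $f\colon M\to A\setminus\{0\}$ whose associated $1$-form $f\theta$ coincides with $dF$ on some neighborhood of $K$. Once this is done, Theorem~\ref{th:Oka} delivers the desired global $A$-immersion approximating $F$ on $K$.

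First, I would shrink the given neighborhood $U$ of $K$ to a better one. Since $K$ is a compact Runge (i.e.\ $\Oscr(M)$-convex) subset of the open Riemann surface $M$, and since $M$ is Stein, there exists a smoothly bounded compact domain $D\subset M$ with $K\subset \mathring D\subset D\subset U$ whose connected components are finitely connected planar domains, and which is again $\Oscr(M)$-convex; that is, $D$ is a closed Jordan domain (possibly with finitely many holes) of the kind considered in the paragraph preceding the statement. Let $f_0=dF/\theta\colon U\to A\setminus\{0\}$, a holomorphic map; by construction $f_0\theta=dF$ on $U\supset D$.

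Second, I extend $f_0|_D$ to a continuous map $f\colon M\to A\setminus\{0\}$. By the handle decomposition recalled before the statement, $M$ is obtained from $D$ by successively attaching $0$- and $1$-handles. Since $A\setminus\{0\}$ is connected (indeed non-empty and path-connected as a smooth complex manifold), the extension proceeds inductively: over each new $0$-handle one sends the handle to an arbitrary point of $A\setminus\{0\}$, while over a $1$-handle $D^1\times D^1$ (attached along $S^0\times D^1$) one uses a path in $A\setminus\{0\}$ joining the images of the two attaching arcs and extends by a homotopy on the square. This yields the desired continuous $f\colon M\to A\setminus\{0\}$ with $f\equiv f_0$ on $D$, and in particular $f\theta=dF$ on the neighborhood $\mathring D$ of $K$.

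Third, I apply Theorem~\ref{th:Oka} to the continuous map $f$, the compact Runge set $K\subset \mathring D\subset M$, and the $A$-immersion $F$ defined on the neighborhood $\mathring D$ of $K$ (on which $f\theta=dF$). The theorem produces a holomorphic map $\tilde f\colon M\to A\setminus\{0\}$, homotopic to $f$, such that $\tilde f\theta=d\widetilde F$ is the differential of an $A$-immersion $\widetilde F\colon M\to\C^n$, with $\widetilde F$ approximating $F$ uniformly on $K$ in the $\mathscr{C}^1$ sense. This is precisely the conclusion of the corollary. The only genuine obstacle is the one already overcome in Theorem~\ref{th:Oka}, namely arranging that the globalized map $\tilde f$ has vanishing periods so that $\tilde f\theta$ is exact; the present corollary is purely the topological packaging of that result.
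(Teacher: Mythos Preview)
Your proof is correct and follows exactly the approach sketched in the paper's own paragraph preceding the corollary: extend $f_0=dF/\theta$ continuously from a compact Runge neighborhood $D$ of $K$ to all of $M$ via the handle decomposition, then invoke Theorem~\ref{th:Oka}. One harmless slip: the components of $D$ need not be planar (on a surface of positive genus a Runge neighborhood of $K$ may itself have positive genus), but this is irrelevant since all you actually use is that $M$ is built from $D$ by attaching $0$- and $1$-handles, which holds for any smoothly bounded compact Runge domain in an open Riemann surface.
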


We also obtain the {\em Mergelyan approximation theorem} for $A$-immer\-sions; see Theorems \ref{th:Mergelyan} and \ref{th:Mergelyan2} in Sec.\ \ref{sec:Oka}. In the second version (Theorem \ref{th:Mergelyan2}) we assume that the directional variety $A$ has a smooth hyperplane section which is an Oka manifold, and we prove the Mergelyan theorem for $A$-immersions with a fixed component function that is holomorphic on all of $M$. This enables us to show that every open Riemann surface carries a proper $A$-embedding into $\C^n$ (see Theorem \ref{th:proper}), thereby solving a natural question that has been open even for null curves in $\C^3$.

We remark that, for null curves in $\C^3$, Runge and Mergelyan theorems were proved by Alarc\'on and L\'opez \cite{AL2}. Their analysis depends on the {\em Weierstrass representation} of a null curve, a tool that is not available in the general situation considered here.

%%%%% SL2C

We end this survey of results by briefly discussing null curves in 
% {\color{red} 
\[
   SL_2(\C)	=\left \{ z= \left(\begin{matrix} z_{11} & z_{12} \cr z_{21} & z_{22} \end{matrix} \right) \colon \det z= z_{11}z_{22}-z_{12}z_{21}=1\right \}\subset\C^4.
\]
Although the quadric variety \eqref{eq:nullSL} meets all our requirements, our methods do not apply directly to null curves in $SL_2(\C)$. Indeed, applying our deformation procedures to a null curve $M\to SL_2(\C)\subset \C^4$, one gets holomorphic immersions $M\to\C^4$ directed by the variety \eqref{eq:nullSL}, but the resulting curves need not lie in $SL_2(\C)$. However, some of our results can be transported to null curves in $SL_2(\C)\setminus\{z_{11}=0\}$ by using the biholomorphism
$\cT\colon\C^3\setminus\{z_3=0\} \to SL_2(\C)\setminus\{z_{11}=0\}$, given by
\begin{equation}\label{eq:T}
\cT(z_1,z_2,z_3) = \frac{1}{z_3}\left(\begin{matrix} 1 & z_1+\imath z_2 \cr z_1-\imath z_2 & z_1^2+z_2^2+z_3^2
\end{matrix} \right), \quad \imath=\sqrt{-1},
\end{equation}
which maps null curves into null curves; see \cite{MUY1}. The following is an immediate corollary to Theorem \ref{th:desing}.

\begin{corollary}\label{co:sl2c-desing}
If $M$ is a compact bordered Riemann surface, then every immersed null curve $M\to SL_2(\C)\setminus\{z_{11}=0\}$ can be approximated in the ${\mathscr C}^1(M)$ topology by embedded null curves.   % $M\to SL_2(\C)$. 
\end{corollary}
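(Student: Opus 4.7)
The plan is to transfer the statement across the biholomorphism $\cT$ of \eqref{eq:T}, reducing it to the already established desingularization theorem for null curves in $\C^3$ (Theorem \ref{th:desing} applied to the quadric $A$ in \eqref{eq:null}).

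First I would set $N:=SL_2(\C)\setminus\{z_{11}=0\}$ and $\Omega:=\C^3\setminus\{z_3=0\}$, so that $\cT\colon\Omega\to N$ is a biholomorphism sending null curves to null curves (as recalled from \cite{MUY1}). Given an immersed null curve $F\in\mathscr C^1(M,N)$, holomorphic on $\mathring M$, form $G:=\cT^{-1}\circ F\colon M\to\Omega$. Then $G$ is a $\mathscr C^1$ $A$-immersion into $\C^3$, where $A$ is the null quadric \eqref{eq:null}; since $A$ satisfies the hypotheses of Theorem \ref{th:local} (it is an irreducible conical hypersurface, smooth away from $0$, not contained in any hyperplane), Theorem \ref{th:desing} supplies a sequence of $A$-embeddings $G_k\colon M\to\C^3$ with $G_k\to G$ in the $\mathscr C^1(M)$ topology.

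The second step is to lift this approximation back to $N$. Since $M$ is compact, $G(M)$ is a compact subset of the open set $\Omega$, so $\dist(G(M),\{z_3=0\})>0$; by $\mathscr C^0$ convergence we have $G_k(M)\subset\Omega$ for all sufficiently large $k$. For such $k$, the map $F_k:=\cT\circ G_k\colon M\to N$ is defined; it is a null immersion because $\cT$ carries null curves to null curves, and it is injective because $\cT$ is a biholomorphism and $G_k$ is injective, hence $F_k$ is a null embedding into $N$. Finally, because $\cT$ is smooth on a neighborhood of $G(M)$ and $M$ is compact, composition with $\cT$ is continuous from $\mathscr C^1(M,\Omega)$ to $\mathscr C^1(M,N)$ on a $\mathscr C^0$-neighborhood of $G$, so $F_k\to F$ in the $\mathscr C^1(M)$ topology.

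I do not foresee a serious obstacle: the only technical point is the continuity of the composition operator $G\mapsto\cT\circ G$ in the $\mathscr C^1$ norm, which is routine on compact domains since $G(M)$ lies in the interior of the domain of holomorphy of $\cT$. All remaining content is packaged in Theorem \ref{th:desing}.
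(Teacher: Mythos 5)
Your proposal is correct and follows exactly the route the paper has in mind: the paper states the corollary is "an immediate corollary to Theorem \ref{th:desing}" via the biholomorphism $\cT$, and your argument fills in the (routine) details — pulling back along $\cT^{-1}$, applying Theorem \ref{th:desing} to the null quadric in $\C^3$, using compactness of $M$ to keep the approximating embeddings inside $\C^3\setminus\{z_3=0\}$, and pushing forward by $\cT$.
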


The local Mergelyan approximation theorem holds in $SL_2(\C)\setminus\{z_{11}=0\}$; see Corollary \ref{co:sl2c-mergelyan}. Finally, Corollary \ref{co:CY} and the correspondence $\cT$ give complete bounded embedded null curves in $SL_2(\C)$. 

On the other hand, we do not know how to derive the Oka principle and the global approximation pro\-perties for null curves in $SL_2(\C)\setminus\{z_{11}=0\}$ by using the correspondence $\cT$. Furthermore, $\cT$ does not seem useful for determining whether every open Riemann surface is a properly embedded null curve in $SL_2(\C)$, in analogy to the situation in $\C^3$ (see Theorem \ref{th:proper}). This question remains open even allowing self-intersections.

%Nevertheless, we expect that all our results hold for null curves in $SL_2(\C)$ as well. In fact, the map (\ref{eq:T}) is a birational equivalence between $\C^3$ and $SL_2(\C)$, and the desired results could perhaps be obtained by considering curves in suitable compactifications of these two manifolds. 

\smallskip\noindent\it Methods used in the proof. \rm
We systematically work with the derivative map $f\colon M\to A\setminus \{0\}$, where $dF=f\theta$ and $\theta$ is a nowhere vanishing holomorphic 1-form on $M$. Every small holomorphic deformation of $f$ is obtained as a composition of flows of holomorphic vector fields tangential to $A$, where the respective time variables are holomorphic functions on $M$. The main point is to find deformations with suitable properties and with vanishing periods over a basis of the 1st homology group $H_1(M;\Z)$; the latter property ensures that the map integrates to an $A$-immersion $M\to\C^n$. For this purpose we develop an effective method of controlling the periods (see Lemma \ref{lem:deformation}). Although similar  technique have been used before (see e.g.\ \cite{GN,LMM,Majcen-2007,AL1}), our proof pertains to a completely general situation and only assumes that $A$ is not contained in any complex hypersurface. This implies that the convex hull of $A$ equals $\C^n$ (see Lemma \ref{lem:convexhull}). Our use of this condition, which corresponds to {\em ampleness} in Gromov's theory, is reminiscent of the proof of the {\em convex integration lemma}, the basic step in Gromov's method of convex integration of partial differential relations \cite{Gromov:convex,Gromov:PDR,EM}. 

In our global results, the method of controlling the periods is combined with the Oka principle for maps from Stein manifolds to Oka manifolds (see Sect.\ \ref{sec:Okamanifolds}). Finally, to desingularize $A$-immersions to $A$-embeddings, we combine all of the above techniques with Abraham's approach \cite{Abraham} to transversality theorems.

\smallskip\noindent\it Open problems. \rm
Our results open several directions of possible further research; let us mention a few of them. 

First of all, one could study {\em $A$-maps} $F\colon M\to\C^n$, that is, holomorphic maps whose derivative belongs to $A$, but may assume the value zero. The analysis near singularities of an immersion may be rather delicate. 

An interesting generalization would be to allow the variety $A$ to depend on the base point. Let $A$ be a closed complex subvariety of $T\C^n \cong \C^n\times\C^n$ with conical fibers $A_z\subset T_z\C^n$. (A special case with linear fibers are holomorphic distributions.) A holomorphic immersion $F\colon M\to\C^n$ is an $A$-immersion if $F'(x)\in A_{F(x)}$ for every $x\in M$. To what extent do our results generalize to this setting? Since sections of the first coordinate projection $\pi\colon A\to \C^n$ define vector fields on $\C^n$ whose integral curves are $A$-immersions, there exist plenty of $A$-immersions of the disc, but it may be difficult to deal with non-simply connected Riemann surfaces. An example are the complex Legendrian curves in $\C^3$ which are directed by the distribution $dz_1+z_3dz_2$. 

Finally, what could be said about directed holomorphic immersions of Stein manifolds of dimension $>1$? Due to involutivity obstructions one must in general allow an open set of directions (an {\em open differential relation} in Gromov's terminology \cite{Gromov:PDR}) to obtain nontrivial results. We hope to return to these interesting questions in a future work.

%%%%%%%%%%%%%%%%%%%%%%%%%%%%%%%%%
%																%
%  SEC 3: PRELIMINARIES					%
%																%
%%%%%%%%%%%%%%%%%%%%%%%%%%%%%%%%%
%
\section{Preliminaries}
\label{sec:prelim}
In this section we establish the notation and recall some basic facts on Riemann surfaces; see for example \cite{AS,Forster-book} or any other standard source. 

Let $M$ be a compact connected bordered Riemann surface with boundary $\emptyset \ne bM \subset M$. Denote by $g$ the genus of $M$ and by $m$ the number of boundary components of $M$. By gluing a disc onto $M$ along each of its boundary curves, we obtain a compact surface, $R$, containing $M$ as a domain with smooth boundary; the number $g$ is also the genus of $R$. 

The 1-st homology group $H_1(M;\Z)$ is a free abelian group on $l=2g+m-1$ generators. We can represent the basis of $H_1(M;\Z)$ by closed, smoothly embedded loops $\gamma_1,\ldots,\gamma_l\colon S^1 \to \mathring M$ that only meet at a chosen base point $p\in \mathring M$. Let $C_j=\gamma_j(S^1)\subset M$ denote the trace of $\gamma_j$. Their union $C=\bigcup_{j=1}^l C_j$ is a wedge of $l$ circles meeting at $p$. 

For any $r\in \Z_+$ we denote by $\mathscr A^r(M)=\mathscr A^r(M,\C)$ the space of all ${\mathscr C}^r$ functions $M\to \C$ that are holomorphic in $\mathring M$. Similary we define the space $\Ascr^r(M,X)$ of maps $A\to X$ to a complex manifold $X$. We write $\Ascr^0(M)=\Ascr(M)$ and $\Ascr^0(M,X)=\Ascr(M,X)$. The space $\Ascr^r(M,X)$ is naturally endowed with the topology associated to the ${\mathscr C}^r$ maximum norm, measured with respect to some smooth Riemannian metric on each of the two manifolds. Note that $\Ascr^r(M,\C^n)$ is a complex Banach space; furthermore, for any complex manifold $X$, $\Ascr^r(M,X)$ is a complex Banach manifold modeled on $\Ascr^r(M,\C^n)$ with $n=\dim X$ (see \cite[Theorem 1.1]{FF:manifolds}).

A function $f\colon M\to \C$ is said to be holomorphic on $M$ if it is holomorphic on some unspecified open neighborhood of $M$ in $R$; the space of all such functions is denoted $\Oscr(M)$. Holomorphic 1-forms on $M$ are defined likewise, as the restrictions to $M$ of holomorphic 1-forms in open neighborhoods on $M$ in $R$. It is classical that each function in $\Ascr^r(M)$ can be approximated in the ${\mathscr C}^r(M)$ topology by functions in $\Oscr(M)$. The same is true for maps to an arbitrary complex manifold or complex space (see \cite[Theorem 5.1]{DF}).

Let $A$ be a closed conical complex subvariety of $\C^n$. By Chow's theorem, such a variety $A$ is algebraic. Let $\overline A\subset \C\P^n=\C^n\cup \C\P^{n-1}$ be the projective closure of $A$. Denoting by $0$ the origin of $\C^n$, we have a holomorphic vector bundle projection $\pi\colon \C\P^n\setminus\{0\} \to \C\P^{n-1}$. Then 
\begin{equation}
\label{eq:projA}
	A_\infty := \pi(A\setminus \{0\}) = \pi(\overline A \setminus\{0\}) = \overline A\cap \C\P^{n-1}
\end{equation}
is a closed algebraic subvariety of $\C\P^{n-1}$, and $\overline A$ is the cone over $A_\infty$. Note that $A\setminus \{0\}$ is smooth and connected if and only if $A_\infty$ is such. Furthermore, $A$ is not contained in any hyperplane in $\C^n$ if and only if $A_\infty$ is not contained in any projective hyperplane in $\C\P^{n-1}$. If $A\subset \C^3$ is the quadric variety (\ref{eq:null}) controlling null curves, then $A_\infty$ is a smooth rational curve  in $\C\P^2$ (an embedded copy of the Riemann sphere $\C\P^1$).

The following observation will be used in Sec.\ \ref{sec:Oka} below; lacking a precise reference, we include a short proof. Simple examples show that the result fails in general for non-algebraic subvarieties.

\begin{lemma}
\label{lem:convexhull}
The convex hull of an algebraic subvariety $A\subset \C^n$ is the smallest affine complex subspace of $\C^n$ containing $A$.
\end{lemma}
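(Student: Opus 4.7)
The plan is to prove two inclusions. Let $V$ denote the smallest affine complex subspace of $\C^n$ containing $A$; as an intersection of $\C$-affine subspaces it is itself $\C$-affine. Since any $\C$-affine subspace of $\C^n\cong\R^{2n}$ is real-convex, the inclusion $\mathrm{conv}(A)\subseteq V$ is immediate.

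The substantive direction is $V\subseteq \mathrm{conv}(A)$, and I would exploit the standing assumption that $A$ is conical. Conicality forces $0\in A$ (since $A$ is closed and $tz\to 0$ as $t\to 0$ for any $z\in A$) and $\C z\subseteq A$ for every $z\in A$. In particular $V$ contains the origin, so $V$ is a $\C$-linear subspace, namely the $\C$-linear span $\Span_\C(A)$.

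The key step is a span-and-rescale trick. Given $v\in V$, write $v=\sum_{i=1}^k c_i z_i$ with $c_i\in\C$ and $z_i\in A$. Setting $w_i:=c_i z_i$, each $w_i$ lies in $A$ by conicality, so
\[
v \;=\; \sum_{i=1}^k w_i \;=\; \sum_{i=1}^k \frac{1}{k}\,(k\, w_i),
\]
and again by conicality $k w_i\in A$. This exhibits $v$ as a real convex combination of $k$ points of $A$, so $v\in\mathrm{conv}(A)$. (The degenerate case $k=0$ just reads $v=0\in A$.)

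The main obstacle is essentially absent once conicality is recognized; the argument reduces to a two-line manipulation. The only subtle point is the reduction: one must observe that for a conical variety the smallest affine complex subspace is automatically the $\C$-linear span, which lets complex linear combinations be rescaled into convex ones. If one wished to drop conicality and prove the statement for a general irreducible algebraic subvariety, the rescaling trick would no longer be available, and one would have to argue via Hahn--Banach separation instead: after reducing to $V=\C^n$, a separating real hyperplane would yield $\mathrm{Re}\,L\le c$ on $A$ for some nonzero $\C$-linear $L$, and then irreducibility combined with Chevalley's theorem on constructibility of images would force either $L$ constant on $A$ (contradicting $V=\C^n$) or $L(A)$ cofinite in $\C$ (contradicting the real-part bound). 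In the conical setting relevant to this paper, however, the direct approach above is the cleanest route.
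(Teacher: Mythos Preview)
Your conical argument is correct, and pleasantly does not even use algebraicity: for any closed complex cone $A\subset\C^n$, the span-and-rescale trick gives $\mathrm{conv}(A)=\Span_\C(A)$. Since the paper only ever invokes the lemma for conical $A$ (in the proof of Lemma~\ref{lem:convex}), this covers every application. However, the lemma as stated carries no conicality hypothesis---it is asserted for an arbitrary algebraic subvariety, and the paper remarks immediately afterward that algebraicity is essential (the result fails for general analytic sets). So your primary argument proves only a special case of what is actually claimed; there is no ``standing assumption'' of conicality in force at this point.

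For the general statement the paper follows the Hahn--Banach route you sketch in your final paragraph, but with a different endgame. Where you would invoke Chevalley's theorem to see that $L(A)\subset\C$ is either a point or cofinite, the paper instead passes to the projective closure $\overline A\subset\C\P^n$: the bounded-above real linear function $u$ extends across $A_\infty=\overline A\cap\C\P^{n-1}$ to a bounded plurisubharmonic function on the compact variety $\overline A$, and the maximum principle forces it to be constant, so $A$ lies in an affine complex hyperplane; one then inducts on dimension. Your Chevalley alternative is a valid purely algebraic substitute that stays in the affine picture, while the paper's route is complex-analytic and exploits the projective compactification.
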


\begin{proof}
%Replacing $\C^n$ by the intersection of all affine complex hyperplanes containing $A$, we may assume that $A$ is not contained in any (real or complex) hyperplane. 
If the convex hull of $A$ is not all of $\C^n$, the Hahn-Banach theorem tells us that $A$ lies in the half space $\{u\le c\}$ for some nonconstant real linear function $u$ on $\C^n$. Let $\overline A \subset \C\P^n$ denote the projective closure of $A$. Since $u$ is bounded from above on $A$, it extends across the subvariety $A_\infty=\overline A \cap \C\P^{n-1}$ to a bounded plurisubharmonic function $u^*\colon \overline A \to \R\cup\{-\infty\}$. As $\overline A $ is compact, the maximum principle implies that $u^*$ is constant, so $A$ lies in an affine complex hyperplane. Now proceed inductively. 
\end{proof}

Fix a nowhere vanishing holomorphic 1-form $\theta$ on $M$; such exists since every holomorphic vector bundle on $M$ is trivial by the Oka-Grauert principle \cite{Gra1,Gra2,Gra3}. Given a holomorphic map $F=(F_1,\ldots,F_n)\colon M\to\C^n$, we write $dF_j=f_j\theta$ and identify the differential $dF$ with the map $f=(f_1,\ldots,f_n)\colon M\to \C^n$. Then $F$ is an $A$-immersion if and only if $f$ maps $M$ to $A\setminus\{0\}$. Conversely, a holomorphic map $f=(f_1,\ldots,f_n)\colon M\to A\setminus \{0\}$ determines an $A$-immersion $F\colon M\to\C^n$ if and only if the holomorphic vector-valued 1-form $f\theta=(f_1\theta,\ldots,f_n\theta)$ is exact, and in this case $F$ is obtained as the integral
\begin{equation}
\label{eq:integral}
	  F(x)= F(p) + \int_{p}^x f\theta , \quad x\in M
\end{equation}
from an arbitrary initial point $p\in M$ to $x$. The choice of the integration curve from $p$ to $x$ does  not matter.

For a fixed  choice of a nowhere vanishing holomorphic 1-form $\theta$ on $M$ and of the basis $\{\gamma_j\}_{j=1}^l$ of the homology group $H_1(M;\Z)$, we denote by 
\[
	\cP=(\cP_1,\ldots, \cP_l)\colon \Ascr(M,\C^n)\to (\C^n)^l
\]
the {\em period map} whose $j$-th component, applied to $f\in \Ascr(M,\C^n)$, equals 
\begin{equation}
\label{eq:P}
	\cP_j(f) = \int_{\gamma_j} f\theta = \int_0^1 f(\gamma_j(t))\, \theta(\gamma_j(t),\dot{\gamma_j}(t))\, dt
	\in \C^n.
\end{equation}
By Stokes' theorem, the period map does not change under homotopic deformations of the loops $\gamma_j \colon [0,1]\to M$, and the 1-form $f\theta$ is exact if and only if its periods vanish: $\cP_j(f) = \int_{\gamma_j} f\theta =0$ for $j=1,\ldots,l$.

%%%%%%%%%%%%%%%%%%%%%%%%%%%%%%%%%%%%%
%																		%
%   SECTION 4: OKA MANIFOLDS        %
%																		%
%%%%%%%%%%%%%%%%%%%%%%%%%%%%%%%%%%%%%
%
\section{Oka manifolds} 
\label{sec:Okamanifolds}

In this section we recall the Oka principle for maps of Stein manifolds to Oka manifolds, and we mention the geometric conditions and examples which are most relevant for us and which illuminate the scope of our global results on $A$-immersions. A comprehensive exposition of Oka theory can be found in \cite{F:book}; for a more leisurely introduction see the surveys \cite{FL1,FL2}. The paper \cite{FL3} contains up-to-date information of what is known about which compact complex surfaces are Oka.

The concept of an Oka manifold evolved from the classical Oka-Grauert principle and the seminal work of Gromov \cite{Gromov:Oka}. This class of manifolds was first formally introduced in \cite{FF:Oka}; see also \cite[Def.\ 5.4.1]{F:book}. 

%
%  OKA MANIFOLDS
%
\begin{definition}
\label{def:Oka}  
A complex manifold $X$ is said to be an {\em Oka manifold} if every holomorphic map from a neighborhood of a compact convex set $K\subset \C^N$ to $X$ can be approximated, uniformly on $K$, by entire maps $\C^N\to X$.
\end{definition}

The main result is that maps $M \to X$ from a Stein manifold $M$ to an Oka manifold $X$ satisfy all forms of the Oka principle (see \cite[Theorem 5.4.4]{F:book}). We shall use the following {\em Oka property with approximation}.

%
%  BOPA
%
\begin{theorem}
\label{th:OkaP}
Assume that $X$ is an Oka manifold, and let $K$ be a compact Runge set in a Stein manifold $M$. Then every continuous map $f\colon M\to X$ which is holomorphic on a neighborhood of $K$ can be approximated, uniformly on $K$, by holomorphic maps $M\to X$ that are homotopic to $f$.
\end{theorem}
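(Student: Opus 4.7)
The plan is to prove this by the standard inductive exhaustion scheme that underlies all Oka principles, using the convex approximation property (Def. \ref{def:Oka}) as the basic building block. Choose a strongly plurisubharmonic Morse exhaustion function $\rho\colon M\to\R$ such that $K\subset\{\rho<c_0\}$ for some regular value $c_0$. Pick regular values $c_0<c_1<c_2<\cdots\to+\infty$ and set $K_j=\{\rho\le c_j\}$; then each $K_j$ is $\Oscr(M)$-convex and $M=\bigcup_j K_j$. I will build inductively a sequence of continuous maps $f_j\colon M\to X$, holomorphic on a neighborhood of $K_j$, such that $f_j$ approximates $f_{j-1}$ on $K_{j-1}$ within a prescribed tolerance $\varepsilon_j>0$ and is homotopic to $f_{j-1}$ through a homotopy supported in a small neighborhood of $K_j\setminus K_{j-1}$. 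Setting $f_0$ to be the original map $f$ (already holomorphic near $K=K_0$) starts the induction.

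The inductive step splits into two cases according to whether $\rho$ has critical points in the slab $\{c_{j-1}\le\rho\le c_j\}$. In the \emph{noncritical case}, the passage from $K_{j-1}$ to $K_j$ can be decomposed into a finite sequence of \SPCP{}s $(A,B)$ with $A\supset K_{j-1}$ and $A\cup B$ advancing toward $K_j$. On each such pair, $A\cap B$ is biholomorphic to a neighborhood of a compact convex set in $\C^N$, so Def. \ref{def:Oka} supplies holomorphic maps on neighborhoods of $B$ that approximate the given map on $A\cap B$; the splitting lemma for holomorphic maps into a complex manifold (the standard gluing tool in Oka theory) then yields a single holomorphic extension to a neighborhood of $A\cup B$ approximating the old map on $A$. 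In the \emph{critical case}, passing through a Morse critical point of index $k\le\dim_\C M$ corresponds to attaching a $k$-handle $E$ whose core is a totally real disc. Here one first extends the current holomorphic map continuously across $E$ (possible because $E$ is a cell and $X$ is path connected along the homotopy class of $f$), then approximates this continuous extension by a map which is holomorphic on a Stein, strongly pseudoconvex neighborhood of $K_{j-1}\cup E$ via a Mergelyan-type approximation theorem for totally real submanifolds, which itself reduces to the convex approximation property after a local straightening of $E$.

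The main obstacle is the gluing step in the noncritical case: one needs to convert the raw approximation statement on $A\cap B$ into an honest approximation of maps into $X$ over $A\cup B$. This is handled by the splitting lemma for sprays (or, equivalently, by the Cartan-type splitting of a transition map close to the identity into a product of two maps close to the identity, one holomorphic on a neighborhood of $A$ and the other on a neighborhood of $B$). The inductive extension of holomorphy across the attaching sphere in the critical case is also technical but is reduced, in a standard local model, to the convex case.

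Finally, choosing the tolerances $\varepsilon_j$ to decrease sufficiently fast (for instance in a geometric series bounded by the prescribed approximation error on $K$) ensures that the sequence $\{f_j\}$ converges uniformly on compacts of $M$ to a holomorphic map $\tilde f\colon M\to X$ which approximates $f$ on $K$ within the given tolerance. Since each step is accompanied by a homotopy $f_{j-1}\simeq f_j$ supported in a neighborhood of $K_j\setminus K_{j-1}$, concatenating these homotopies (which are eventually constant on any fixed compact) produces a continuous homotopy from $f$ to $\tilde f$, completing the proof.
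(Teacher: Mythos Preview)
The paper does not actually prove Theorem \ref{th:OkaP}; it is stated there as a known result and cited from the literature (specifically \cite[Theorem 5.4.4]{F:book}). So there is no ``paper's own proof'' to compare against.

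That said, your sketch is a faithful outline of the standard proof of this Oka principle as developed in \cite{F:book}: exhaustion by sublevel sets of a strongly plurisubharmonic Morse function, handling the noncritical step by decomposing into Cartan pairs and invoking CAP together with a splitting lemma, and handling the critical step by attaching a totally real handle followed by Mergelyan-type approximation. The main technical ingredients you mention (the splitting lemma for sprays, the local model near a critical point, the gluing of homotopies) are exactly the ones used there. In the context of this paper, however, the appropriate response is simply to cite the result rather than reprove it.
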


By a theorem of Grauert \cite{Gra1}, every complex homogeneous manifold is an Oka manifold. (See also \cite[Proposition 5.5.1]{F:book}.) The most useful geometric conditions which are known to imply that a manifold is Oka are {\em ellipticity} in the sense of Gromov \cite{Gromov:Oka}, and {\em subellipticity} in the sense of Forstneri\v c \cite{FF:subelliptic}. These conditions are formulated in terms of (families of) holomorphic sprays $s\colon E\to X$ on holomorphic vector bundles $E\to X$. A spray restricts to the identity map on the zero section of $E$; it is said to be {\em dominating} if its derivative $ds_{0_z} \colon E_z \to T_z X$ in the fiber direction at any point $0_z$ in the zero section is surjective. A complex manifold which admits a dominating spray is said to be elliptic. Similarly one defines dominability of a family of sprays and subelliptic manifolds.

\begin{example}
{\rm (Gromov \cite{Gromov:Oka})}
If the tangent bundle $TX$ of a complex manifold $X$ is pointwise spanned by finitely many $\C$-complete holomorphic vector fields $V_j$ $(j=1,\ldots,m)$, then the compositions of their flows $\phi^j_t$ gives a dominating spray on $X$, defined on the trivial bundle of rank $m$ over $X$:
\begin{equation}
\label{eq:flow-spray}
	s(z,t_1,\ldots,t_m)=\phi^1_{t_1}\circ \phi^2_{t_2}\circ \cdots \circ \phi^m_{t_m} (z),\quad
	z\in X,\ (t_1,\ldots,t_m)\in\C^m.
\end{equation}
Hence every such manifold is elliptic and therefore Oka.
\qed \end{example}

We shall apply Theorem \ref{th:OkaP} to the manifold $X=A\setminus\{0\}$, where $A \subset\C^n$ is a conical algebraic subvariety as in Theorem \ref{th:local}. Such $A$ is a complex cone over the projective manifold $\pi(A)= A_\infty \subset \C\P^{n-1}$; see (\ref{eq:projA}).

\begin{example}
\label{eq:quadrics}
If $P(z)$ is a homogeneous quadratic polynomial on $\C^n$ such that the conical hypersurface $A=\{P=0\}$ is smooth away from the origin, then the manifold $A\setminus \{0\}$  admits a dominating spray of the form (\ref{eq:flow-spray}), and hence is an Oka manifold.  Indeed, the holomorphic vector fields 
\[
	V_{j,k}=\frac{\di P}{\di z_j}\frac{\di}{\di z_k}-\frac{\di P}{\di z_k}\frac{\di}{\di z_j},
	\quad 1\le j\ne k\le n,
\]
are tangential to $A$, they span the tangent space $T_zA$ at each point $0\ne z\in A$, they are linear and hence $\C$-complete, and their flows preserve $A\setminus \{0\}$. 

In particular, the varieties (\ref{eq:null}), (\ref{eq:nullSL}) determining null curves in $\C^3$ and $SL_2(\C)$, respectively, are Oka manifolds after removing the origin. 
\qed\end{example}

Since the projection $\pi\colon A \setminus \{0\} \to A_\infty\subset\C\P^{n-1}$ is a holomorphic fiber bundle with Oka fiber $\C^*$, Theorem 5.5.4 in \cite{F:book} implies the following.

\begin{proposition}
\label{prop:Oka}
Let $A$ be a closed conical subvariety of $\C^n$ such that $A\setminus\{0\}$ is smooth. Then $A\setminus\{0\}$ is an Oka manifold if any only if $A_\infty=\overline A\cap \C\P^{n-1}$ \eqref{eq:projA} is an Oka manifold. 
\end{proposition}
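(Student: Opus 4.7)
The plan is to deduce the proposition directly from the ascent–descent theorem for fiber bundles with Oka fibers (Theorem 5.5.4 in \cite{F:book}), which says that if $\pi\colon E\to B$ is a holomorphic fiber bundle with Oka fiber, then $E$ is Oka if and only if $B$ is Oka. All I really need to provide is the bundle structure on the projection $\pi\colon A\setminus\{0\}\to A_\infty$ and observe that its fiber is Oka.

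First I would verify that $\pi$ is a holomorphic principal $\C^*$-bundle. Because $A$ is a closed conical subvariety, the standard scalar $\C^*$-action on $\C^n\setminus\{0\}$ restricts to a free holomorphic action on $A\setminus\{0\}$, and $A_\infty$ is the quotient, realized as the restriction of the tautological projection $\C^n\setminus\{0\}\to\C\P^{n-1}$. To produce local trivializations, fix $[z_0]\in A_\infty$ and choose a linear form $\ell$ on $\C^n$ with $\ell(z_0)\neq 0$. On the open neighborhood $U=\{[z]\in A_\infty:\ell(z)\neq 0\}$ of $[z_0]$ the map
\[
    \Phi\colon \pi^{-1}(U)\longrightarrow U\times \C^*, \qquad z\longmapsto \bigl([z],\,\ell(z)\bigr),
\]
is a biholomorphism with inverse $([z],t)\mapsto (t/\ell(z))\,z$, and it intertwines $\pi$ with the first projection. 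Since such forms $\ell$ can be chosen at every point of $A_\infty$, this exhibits $\pi$ as a holomorphic fiber bundle with fiber $\C^*$.

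Second, the fiber $\C^*$ is a complex Lie group, hence homogeneous, and so by Grauert's theorem it is an Oka manifold. Now Theorem 5.5.4 in \cite{F:book} applies to the bundle $\pi$ and yields the desired equivalence: $A\setminus\{0\}$ is Oka if and only if $A_\infty$ is Oka.

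There is no genuine obstacle here; the only point that requires care is the explicit construction of the local trivializations, and this is where the conical hypothesis is used in an essential way (it is precisely what makes $\ell$, restricted to the fiber through $z_0$, a coordinate on that $\C^*$-orbit). The rest is a direct appeal to the cited ascent–descent principle.
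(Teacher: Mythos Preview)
Your proposal is correct and follows exactly the same route as the paper: the paper simply states that $\pi\colon A\setminus\{0\}\to A_\infty$ is a holomorphic fiber bundle with Oka fiber $\C^*$ and invokes Theorem 5.5.4 in \cite{F:book}. You supply more detail than the paper does by writing down explicit local trivializations, but the argument is the same.
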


\begin{example}
\label{ex:OkaRS}
A Riemann surface $X$ is Oka if and only if it is not Kobayashi hyperbolic %{\color{red}
(see \cite{Gra1} and \cite[Corollary 5.5.3]{F:book}). If such $X$ is compact, it is either the Riemann sphere $\C\P^1$ or a complex torus. Each of these surfaces embeds in $\C\P^k$ for any $k\ge 2$; embeddings of $\C\P^1$ are called {\em rational curves}, while embeddings of tori are {\em elliptic curves}. The complex cone $A\setminus \{0\} \subset \C^{k+1}$ over any such curve is an Oka manifold in view of Proposition \ref{prop:Oka}.  
\qed \end{example}

%\begin{example}
%Here is a summary of what is known about which minimal compact complex surfaces are Oka
%(see \cite[p.\ 4]{FL}); here $\kappa\in\{-\infty,0,1,2\}$ denotes the Kodaira dimension of the surface.
%Any compact projective Oka manifold on this list can be used as our $A_\infty$ \eqref{eq:projA}.
%
%$\kappa=-\infty$:  Rational surfaces are Oka.  A ruled surface is Oka if and only if its base is Oka.  Theorem 4 in \cite{FL} covers surfaces of class VII if the global spherical shell conjecture is true.
%
%$\kappa=0$:  Bielliptic surfaces, Kodaira surfaces, and tori are Oka.  It is unknown whether any or all K3 or Enriques surfaces are Oka.
%
%$\kappa=1$:  Buzzard and Lu determined which properly elliptic surfaces are dominable by $\C^2$ \cite{Buzzard-Lu}.  Nothing further is known about the Oka property for these surfaces.
%
%$\kappa=2$:  Surfaces of general type are not Oka manifolds. % dominable by $\C^2$, and hence are not Oka manifolds. 
%\qed\end{example}

%%%%%%%%%%%%%%%%%%%%%%%%%%%%%%%%%%%%%%%%%%%%%%%%%%%%%%%
%																											%
%   SECTION 5: LOCAL STRUCTURE OF THE SPACE $\IA$     %
%																											%
%%%%%%%%%%%%%%%%%%%%%%%%%%%%%%%%%%%%%%%%%%%%%%%%%%%%%%%
%
\section{Local structure of the space $\IA$}
\label{sec:local}
In this section we prove Theorem \ref{th:local}. We use the notation established in Sec.\ \ref{sec:prelim}. In particular, we fix a nowhere vanishing holomorphic 1-form $\theta$ on $M$ and let $\cP$ denote the associated period map given by (\ref{eq:P}). 

\smallskip
\noindent \it Proof of part (a). \rm 
Assume that $F\colon M\to \C^n$ is a degenerate $A$-immersion (see Def.\ \ref{def:nondegenerate}). Write $dF=f\theta$, with $f\colon M\to A\setminus\{0\}$ and $\cP(f)=0$. Let $\Sigma(f)$ denote the $\C$-linear subspace of $\C^n$ spanned by all the tangent spaces $T_z A$, $z\in f(M)$. Since $F$ is degenerate, $\Sigma(f)$ is a proper subspace of $\C^n$. To prove part (a), it suffices to find a map $\wt f \colon M\to A$ of class $\Ascr(M)$, arbitrarily close to $f$, such that $\cP(\wt f)=0$ and $\dim \Sigma(\wt f) > \dim \Sigma(f)$; the proof is then finished by a finite induction. 

Choose points $x_1,\ldots,x_k\in M$ such that the tangent spaces $T_{f(x_j)} A$ for $j=1,\ldots,k$ span $\Sigma(f)$. The set 
\[
	A'=\{0\} \cup\{ z\in A\setminus\{0\} \colon T_z A \subset \Sigma(f)\}
\]
is a proper complex subvariety of $A$, and we have $f(M)\subset A'\setminus\{0\}$. 

Choose a holomorphic tangential vector field $V$ on $A$ that vanishes at $0$ and is not everywhere tangential to $A'$ along $f(M)$. Let $t\mapsto \phi(t,z)$ denote the flow of $V$ for small complex values of time $t$, with $\phi(0,z)=z \in A$. Choose a nonconstant function $h\in \Oscr(M)$ that vanishes at the points $x_1,\ldots, x_k$. For any function $g$ in a small open neighborhood of the zero function in $\Ascr(M)$ we define the map $\Phi(g)\in \Ascr(M,A\setminus\{0\})$ by setting
\[
	\Phi(g)(x)= \phi(g(x)h(x), f(x)),\quad x\in M.
\]
Clearly $\Phi(g)$ depends holomorphically on $g$. Consider the holomorphic map
\[
	\Ascr(M) \ni g \longmapsto \cP(\Phi(g)) \in (\C^n)^l.
\]
Since $\cP(\Phi(0))= \cP(f)=0$ and the space $\Ascr(M)$ is infinite dimensional, there is a nonconstant function $g\in \Ascr(M)$ arbitrarily close to $0$ such that $\cP(\Phi(g))=0$. For such $g$, the map $\tilde f = \Phi(g) \colon M\to A\setminus \{0\}$ integrates to an $A$-immersion $\wt F\in \IA$ that is close to $F$. 

Since the function $gh$ vanishes at the points $x_1,\ldots, x_k$, we have $\Phi(g)(x_j)=\phi(0,f(x_j))=f(x_j)$ for $j=1,\ldots,k$, so $\Sigma(f) \subset \Sigma(\tilde f)$.  Furthermore, as $gh$ is nonconstant on $M$ and the vector field $V$ is not everywhere tangential to $A'$ along $f(M)$, there is a point $x_0\in M \setminus \{x_1,\ldots,x_k\}$ such that $z_0:=\Phi(g)(x_0)\in A\setminus A'$. By the definition of $A'$, the tangent space $T_{z_0} A \subset \Sigma(\tilde f)$ is not contained in $\Sigma(f)$, so $\Sigma(\tilde f)$ is strictly larger than $\Sigma(f)$. This concludes the proof of part (a) in Theorem \ref{th:local}.

\smallskip
\noindent \it Proof of part (b). \rm 
According to \cite[Theorem 1.1]{FF:manifolds}, $\Ascr(M,A\setminus\{0\})$ is a complex Banach manifold modeled on the complex Banach space $\Ascr(M,\C^k)$ with $k=\dim A$. The following lemma is the key to the proof of part (b).

\begin{lemma}
\label{lem:deformation}
Let $F\in \IA$ be a nondegenerate $A$-immersion, and write $dF=f\theta$ with $f\colon M\to A\setminus \{0\}$. Then there exist an open neighborhood $U$ of the origin in some $\C^N$ and a holomorphic map 
\[
	U\times M\ni (\zeta,x) \longmapsto \Phi_f(\zeta,x)\in A\setminus\{0\} 
\]
such that $\Phi_f(0,\cdotp)=f$ and the period map $\zeta\mapsto \cP(\Phi_f(\zeta,\cdotp)) \in (\C^n)^l$ (\ref{eq:P}) has maximal rank equal to $ln$ at $\zeta=0$. 

Furthermore, there is a neighborhood $V$ of $f$ in $\Ascr(M,A\setminus\{0\})$ such that the map $V\ni \tilde f \mapsto \Phi_{\tilde f}$ can be chosen to depend holomorphically on $\tilde f$.
\end{lemma}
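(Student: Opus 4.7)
My plan is to construct $\Phi_f(\zeta,x)$ as a composition of holomorphic flows of a finite family of polynomial tangential vector fields on $A$, with time parameters of the form $\zeta_k g_k(x)$ depending linearly on $\zeta\in\C^N$ and holomorphically on $x\in M$, arranged so that the first-order deformation at $\zeta=0$ already realizes every possible period vector. Since $A$ is affine algebraic with $A\setminus\{0\}$ smooth, its tangent sheaf is globally generated by finitely many polynomial tangential vector fields $V_1,\dots,V_m$ on $\C^n$; using conicality I take these homogeneous of positive degree, so they vanish at $0$ and their local flows $\phi^{V_\mu}_t$ preserve $A\setminus\{0\}$ on compacts for small $t$. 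For an integer $N$ and tuples $\mu_k\in\{1,\dots,m\}$, $g_k\in\Oscr(M)$ to be pinned down, I then set
\[
\Phi_f(\zeta,x)=\phi^{V_{\mu_1}}_{\zeta_1 g_1(x)}\circ\cdots\circ\phi^{V_{\mu_N}}_{\zeta_N g_N(x)}(f(x)),\qquad (\zeta,x)\in U\times M,
\]
for a small neighborhood $U\subset\C^N$ of the origin. This is holomorphic in $(\zeta,x)$ and in $f$, takes values in $A\setminus\{0\}$, and reduces to $f$ at $\zeta=0$.

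A direct computation gives $\partial_{\zeta_k}|_{\zeta=0}\Phi_f(\zeta,x)=g_k(x)V_{\mu_k}(f(x))$, so the differential of $\zeta\mapsto\cP(\Phi_f(\zeta,\cdot))$ at $\zeta=0$ is the $\C$-linear map $\zeta\mapsto\sum_k \zeta_k w_k$ with
\[
w_k:=\Bigl(\int_{\gamma_j}g_k V_{\mu_k}(f)\,\theta\Bigr)_{j=1}^l\in(\C^n)^l.
\]
Picking $(\mu_k,g_k)_{k=1}^N$ with $N=ln$ such that $w_1,\dots,w_N$ span $(\C^n)^l$ would finish the construction. This spanning property is equivalent to surjectivity of the $\C$-linear map
\[
\cT\colon\bigoplus_{\mu=1}^m\Oscr(M)\to(\C^n)^l,\qquad (g_\mu)\mapsto\Bigl(\int_{\gamma_j}\sum_\mu g_\mu V_\mu(f)\,\theta\Bigr)_{j=1}^l.
\]

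The main obstacle is proving that $\cT$ is surjective, and this is where nondegeneracy of $F$ enters. Suppose, toward a contradiction, that a nonzero $(L_1,\dots,L_l)\in((\C^n)^*)^l$ annihilates the image; then $\sum_j\int_{\gamma_j}g\,L_j(V_\mu(f))\,\theta=0$ for every $g\in\Oscr(M)$ and every $\mu$. Since $V_1(f(x)),\dots,V_m(f(x))$ span $T_{f(x)}A$ at each $x\in M$, the vanishing of $L_j(V_\mu(f(x)))$ in $\mu$ is equivalent to $L_j|_{T_{f(x)}A}=0$. Therefore if some $L_{j_0}\neq 0$, the analytic set $E:=\{x\in M:L_{j_0}|_{T_{f(x)}A}=0\}$ must be a proper (hence discrete) subset of $M$: otherwise $L_{j_0}$ would annihilate $\Sigma(f)=\C^n$, contradicting $L_{j_0}\neq 0$. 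Choosing $x_0\in\gamma_{j_0}\setminus E$ and $\mu_0$ with $L_{j_0}(V_{\mu_0}(f(x_0)))\neq 0$, the holomorphic function $\psi:=L_{j_0}(V_{\mu_0}(f))$ is nonzero on an open subarc of $\gamma_{j_0}$ around $x_0$. Fixing a homology basis $(\gamma_j)$ with connected complement $M\setminus\bigcup_j\gamma_j$ (a canonical polygonal dissection of $M$), pick a short arc $\beta\subset\gamma_{j_0}$ around $x_0$ disjoint from $\{p\}\cup\bigcup_{j'\neq j_0}\gamma_{j'}$ on which $\psi$ is nonzero. Mergelyan's theorem on the $1$-dimensional compact $\bigcup_j\gamma_j$ (which is Runge in $M$ because its complement has no relatively compact components) supplies $g\in\Oscr(M)$ uniformly approximating on $\bigcup_j\gamma_j$ a continuous bump supported near $\beta$, so that $\sum_j\int_{\gamma_j}g L_j(V_{\mu_0}(f))\theta$ is arbitrarily close to $\int_\beta\psi\theta\neq 0$---a contradiction.

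Finally, once the data $(V_\mu)$ and $(\mu_k,g_k)_{k=1}^N$ are fixed based on $f$, the same formula defines $\Phi_{\tilde f}$ for any $\tilde f\in\Ascr(M,A\setminus\{0\})$ close to $f$; holomorphic dependence of $\Phi_{\tilde f}$ on $\tilde f$ follows from holomorphic dependence of the flows $\phi^{V_\mu}_t(z)$ on $(z,t)$ together with holomorphy of evaluation $\tilde f\mapsto \tilde f(x)$ on the Banach manifold $\Ascr(M,A\setminus\{0\})$, and the maximal-rank condition at $\zeta=0$ is open, hence persists on a neighborhood of $f$. The delicate step throughout is the Mergelyan localization: simultaneously controlling one holomorphic function's contributions to integrals over $l$ cycles meeting at the base point $p$, for which the connectedness of $M\setminus\bigcup_j\gamma_j$ is the key topological input.
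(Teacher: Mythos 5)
Your construction mirrors the paper's: both build $\Phi_f$ by composing the local flows of finitely many holomorphic vector fields tangential to $A$ and vanishing at $0$, with time parameters $\zeta_k g_k(x)$ obtained by Mergelyan-approximating bump functions concentrated on the homology loops, and both use nondegeneracy plus the identity principle to locate the bumps. The one place you genuinely diverge is the rank computation. The paper organizes the parameters so that $h_{i,k}$ is supported on the single loop $C_i$, making $dP(0)$ block-diagonal with $l$ blocks of size $n\times m$; it then shows each block has rank $n$ directly by choosing $m$ bump points $x_{i,k}\in C_i$ where the vectors $V_k(f(x_{i,k}))$ span $\C^n$. You instead prove abstract surjectivity of the $\C$-linear map $\cT$ on $\bigoplus_\mu\Oscr(M)$ by a duality argument: a nonzero annihilating functional $(L_j)$ forces some $L_{j_0}$ to vanish on $T_{f(x)}A$ for all $x$, contradicting $\Sigma(f)=\C^n$, once a Mergelyan bump localizes the period integral to an arc of $\gamma_{j_0}$ away from $p$ and the other loops. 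The paper's route gives an explicit $lm$-parameter family with a transparent block structure; your route is slightly shorter and buys you the economy of only $N=ln$ parameters once surjectivity is known, at the cost of being non-constructive about the choice of the $(\mu_k,g_k)$. Both are valid.

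Two small remarks on details you gloss over. First, the assertion that the tangential polynomial vector fields can be taken homogeneous of positive degree is true (homogeneous components of a tangential field are tangential since the ideal of $A$ is homogeneous; and any constant tangential field $V_0$ can be replaced by the degree-one fields $z_iV_0$ to recover the spanning at $z\ne0$), but the paper sidesteps this by invoking Cartan's Theorem~A for the sheaf of tangential fields vanishing at $0$, which is tidier. Second, in your last displayed contradiction the limit is really $\int_\beta\chi\psi\,\theta$ for the bump $\chi$, not $\int_\beta\psi\,\theta$; this is what's nonzero for $\beta$ short enough, since $\psi(x_0)\ne0$ and $\theta$ is nowhere vanishing.
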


Assuming Lemma \ref{lem:deformation} for a moment, we explain how it implies Theorem \ref{th:local}-(b). Denote by $\Ascr^*(M,A\setminus\{0\})$ the set of all $f\in \Ascr(M,A\setminus\{0\})$ satisfying $\cP(f)=0$, where $\cP$ is the period map (\ref{eq:P}). Let us call $f\in \Ascr(M,A\setminus\{0\})$ {\em nondegenerate} if it satisfies the condition in Definition \ref{def:nondegenerate}, and let $\Ascr^*_{reg}(M,A\setminus\{0\})$ denote the set of all nondegenerate maps with vanishing periods (an open subset of $\Ascr^*(M,A\setminus\{0\})$). The components of the period map $\cP$ are holomorphic functions on $\Ascr(M,A\setminus\{0\})$, and Lemma \ref{lem:deformation} says that their differentials are linearly independent at every point $f_0\in \Ascr^*_{reg}(M,A\setminus\{0\})$. The implicit function theorem shows that any such $f_0$ admits an open neighborhood $U\subset \Ascr(M,A\setminus\{0\})$ such $U\cap \Ascr^*(M,A\setminus\{0\})$ is a complex Banach submanifold of $U$, parametrized by the kernel of the differential $d\cP_{f_0}$ of the period map at $f_0$ (a complex codimension $ln$ subspace of the Banach space $\Ascr(M,\C^k)$ with $k=\dim A$). This shows that $\Ascr^*_{reg}(M,A\setminus\{0\})$ is a complex Banach manifold. The integration (\ref{eq:integral}), with an arbitrary choice of the initial value $F(p)\in \C^n$, provides an isomorphism between $\Ascr^*_{reg}(M,A\setminus\{0\}) \times \C^n$ and the subset of $\IA$ consisting of all nondegenerate $A$-immersions, so Theorem \ref{th:local}-(b) follows. 
  
% By the implicit function theorem, Lemma \ref{lem:deformation} implies that the subset $\Ascr_*(M,A\setminus\{0\})$ of $\Ascr(M,A\setminus\{0\})$, consisting of all maps with vanishing periods (\ref{eq:P}), is a complex Banach submanifold of finite codimension $ln$ in a neighborhood of any nondegenerate $A$-immersion. The integration (\ref{eq:integral}), with an arbitrary choice of the initial value $F(p)\in \C^n$, then provides an isomorphism between the Banach manifold $\Ascr_*(M,A\setminus\{0\}) \times \C^n$ and the subset of $\IA$ consisting of all nondegenerate $A$-immersions, so Theorem \ref{th:local}-(b) follows. 

\smallskip 
\noindent \it Proof of Lemma \ref{lem:deformation}. \rm
Let $C_1,\ldots, C_l$ be smooth embedded loops in $M$ forming a basis of $H_1(M;\Z)$, and set $C=\bigcup_{j=1}^l C_j \subset M$. As explained in Sec.\ \ref{sec:prelim}, we may assume that the $C_j$'s only meet at a common point $p\in M$ and are otherwise pairwise disjoint. Let $\gamma_j\colon [0,1]\to C_j$ be a parametrization of $C_j$. 

By Cartan's theorem A there exist holomorphic vector fields $V_1,\ldots, V_m$ on $\C^n$ that are tangential to $A$ along $A$, that vanish at $0\in A$, and that satisfy $\mathrm{span}(V_1(z),\ldots, V_m(z))=T_z A$ for every $z\in A\setminus \{0\}$. We may assume that $m\ge n$. Let $\phi^j_t$ denote the flow of $V_j$ for small complex values of time $t$. Since $V_j$ is tangential to $A$, we have $\phi^j_t(x)\in A$ when $x\in A$.

For every $i=1,\ldots,l$ let $h_{i,1},\ldots, h_{i,m} \colon C \to \C$ be smooth functions that are identically zero except on $C_i$; their values on $C_i$ will be specified later. In particular, these functions vanish near the common point $p$ of the $C_i$'s. 

Let $\zeta=\left(\zeta_1,\ldots, \zeta_l\right)\in \C^{lm}=(\C^m)^l$, where  $\zeta_j=(\zeta_{j,1},\ldots,\zeta_{j,m})\in \C^m$ are complex coordinates on the $j$-th copy of $\C^m$ in the above product. For a sufficiently small open neighborhood $U\subset \C^{lm}$ of the origin we define a smooth map $\Psi\colon U \times C \to A$ by setting
\begin{equation}
\label{eq:Psi}
	\Psi(\zeta,x)=\phi^1_{\zeta_{1,1}h_{1,1}(x)} \circ \cdots  \circ \phi^m_{\zeta_{l,m}h_{l,m}(x)} (f(x))
\end{equation}
for $\zeta\in U\subset \C^{lm}$ and $x\in C$. The composition on the right hand side of (\ref{eq:Psi}) contains all terms $\phi^j_{\zeta_{j,k}h_{j,k}(x)}$ (the flow of $V_j$ for time $\zeta_{j,k}h_{j,k}(x)$) for $j=1,\ldots,l$ and $k=1,\ldots, m$. The order of terms in $\Psi$ is unimportant. Note that 
\[
	\Psi(0,x)= f(x),\quad x\in C.
\]
Since the function $h_{i,k}$ vanishes on $C\setminus C_i$, it follows that 
\[
	\Psi(\zeta,x) = \phi^1_{\zeta_{i,1}h_{i,1}(x)} \circ \cdots  \circ \phi^m_{\zeta_{i,m}h_{i,m}(x)} (f(x)) 	
		\quad\text{for}\ x\in C_i, 
\]
and $\Psi(\zeta,x)$ is independent of $\zeta_{j,k}$ when $x\in C_i$ and $i\ne j$. Note that $\Psi$ is smooth, and it is holomorphic in the variable $\zeta\in U$ for every fixed $x\in C$. For any $x\in C_i$ its partial derivatives at $\zeta=0$ equal
\begin{eqnarray*}
	\frac{\di \Psi(\zeta,x)}{\di \zeta_{i,k}}\bigg|_{\zeta=0} &=& h_{i,k}(x)\, V_k(f(x)), \cr
	\frac{\di \Psi(\zeta,x)}{\di \zeta_{j,k}}\bigg|_{\zeta=0} &=& 0 \qquad \text{for}\ j\ne i. \cr 
\end{eqnarray*}

Recall that $\cP$ is the period map (\ref{eq:P}). Consider the map $P=\cP(\Psi)=(P_1,\ldots,P_l)\colon U\to (\C^n)^l$ defined by 
\begin{equation}
\label{eq:P2}
	P_i(\zeta) = \cP_i(\Psi(\zeta,\cdotp)) = \int_{C_i} \Psi(\zeta,\cdotp)\,\theta = 
	\int_0^1 \Psi(\zeta,\gamma_i(t))\, \theta(\gamma_i(t),\dot\gamma_i(t)) \,dt 
\end{equation}
for $\zeta \in U$ and $i=1,\ldots,l$. That is, $P_i(\zeta)\in\C^n$ is the period of the map $C_i\ni x\mapsto \Psi(\zeta,x)\in A$ around the loop $C_i$. Clearly $P$ is holomorphic. Its partial derivatives at $\zeta=0$ equal
\begin{eqnarray*}
	\frac{\di P_i(\zeta)}{\di \zeta_{i,k}}\bigg|_{\zeta=0} &=& 
	\int_0^1 \frac{\di \Psi(\zeta,\gamma_i(t))}{\di \zeta_{i,k}}\bigg|_{\zeta=0}\,
	 \theta(\gamma_i(t),\dot\gamma_i(t)) \,dt \cr
	&=& \int_0^1 h_{i,k}(\gamma_i(t))\, V_k(f(\gamma_i(t)) \, \theta(\gamma_i(t),\dot\gamma_i(t)) \,dt; \cr
	\frac{\di P_i(\zeta)}{\di \zeta_{j,k}}\bigg|_{\zeta=0} &=& 0 \quad \text{for}\ j\ne i.\cr
\end{eqnarray*}
Hence the matrix representing the differential $dP(0)$ has a block structure, with $l\times l$ blocks, each of them of size $n\times m$, such that the blocks off the main diagonal are zero, while the $i$-th diagonal block represents the partial differential $d_{\zeta_i}P_i(0)$. The $k$-th column of this block equals $\frac{\di P_i(\zeta)}{\di \zeta_{i,k}}\big|_{\zeta=0}$.

\begin{lemma}
\label{lem:maxrank}
The functions $h_{j,k}\colon C\to \C$ can be chosen such that the differential $dP(0)$ of the period map (\ref{eq:P2}) at the origin has maximal rank $ln$.
\end{lemma}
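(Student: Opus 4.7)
Plan. The key structural observation I would use is that $dP(0)$ is block-diagonal. Because each $h_{i,k}$ is supported on $C_i$ only, the mixed partials $\partial P_j/\partial\zeta_{i,k}|_{\zeta=0}$ vanish whenever $j\ne i$, so $\rank dP(0)=\sum_i\rank B_i$, with $B_i$ the $(n\times m)$ diagonal block whose $k$-th column is
\[
	c_{i,k}=\int_0^1 h_{i,k}(\gamma_i(t))\,V_k(f(\gamma_i(t)))\,\theta(\gamma_i(t),\dot\gamma_i(t))\,dt.
\]
This reduces the lemma to showing, for each fixed $i$ independently, that the functions $h_{i,1},\ldots,h_{i,m}$ on $C_i$ can be chosen so that $\rank B_i=n$.

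Fix $i$. The first step is identifying the space of attainable columns. Since $\theta(\gamma_i(t),\dot\gamma_i(t))$ is nowhere zero on $[0,1]$ (as $\theta$ has no zeros on $M$ and $\gamma_i$ is an immersion), letting $h_{i,k}$ range over smooth functions on $C_i$ causes $c_{i,k}$ to sweep out the entire subspace
\[
	W_{i,k}:=\Span_\C\{V_k(f(\gamma_i(t)))\colon t\in[0,1]\}\subset\C^n.
\]
The central claim I would then prove is $\sum_{k=1}^m W_{i,k}=\C^n$. Indeed, if some $\lambda\in(\C^n)^*$ annihilates every $W_{i,k}$, the holomorphic functions $g_k:=\lambda\circ V_k\circ f$ on $M$ all vanish along the loop $C_i$; since $C_i$ has accumulation points in the connected Riemann surface $M$, the identity principle forces $g_k\equiv 0$ on all of $M$. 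Hence $\lambda$ kills $V_k(f(x))$ for every $x\in M$ and every $k$, and therefore annihilates $\Span\{T_{f(x)}A\colon x\in M\}=\Sigma(f)$, which equals $\C^n$ by the nondegeneracy hypothesis on $F$, so $\lambda=0$.

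The last step is linear-algebraic: select $c_{i,k}\in W_{i,k}$ whose joint span is $\C^n$. Given $\sum_k W_{i,k}=\C^n$, this is immediate in the generic case, by placing bump functions $h_{i,k}$ near points of $C_i$ where $V_k\circ f$ assumes prescribed basis-spanning values. The main obstacle I foresee is the degenerate case in which too few $W_{i,k}$ are nontrivial, or several collapse onto the same subspace, so that $m$ columns cannot span a sum of dimension $n$ even though the subspaces themselves sum to $\C^n$. I would overcome this by slightly enlarging the parameter space in \eqref{eq:Psi}: replace each single factor $\phi^k_{\zeta_{i,k}h_{i,k}(x)}$ by a composition of several factors $\phi^k_{\zeta_{i,k,s}h_{i,k,s}(x)}$ with independent parameters $\zeta_{i,k,s}$ and functions $h_{i,k,s}$ — admissible because the lemma only requires the existence of some $N\in\N$. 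Each such replica contributes an additional column lying in $W_{i,k}$, so with enough replicas one can realize a basis of each $W_{i,k}$ among the columns of $B_i$; their union then spans $\sum_k W_{i,k}=\C^n$, giving $\rank B_i=n$ and the full maximal rank $ln$ of $dP(0)$. Thus the essential content of the proof is the identity $\sum_k W_{i,k}=\C^n$, driven by the identity principle on $M$ and the nondegeneracy of $F$; the combinatorial realization of this span as a column rank is then a routine enlargement.
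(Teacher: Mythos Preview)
Your approach is essentially the paper's: reduce via the block-diagonal structure of $dP(0)$ to one loop $C_i$ at a time, invoke nondegeneracy together with the identity principle to see that the values $V_k(f(x))$ along $C_i$ span $\C^n$, and realise spanning columns by bump functions. The paper does exactly this, choosing explicit points $x_{i,k}\in C_i$ and concentrating $h_{i,k}$ near $x_{i,k}$ so that the $k$-th column of the $i$-th block is close to the nonzero scalar multiple $V_k(f(x_{i,k}))\,\theta(\gamma_i(t_{i,k}),\dot\gamma_i(t_{i,k}))$.

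Your caution about the ``degenerate case'' is well placed---in the abstract, $\sum_k W_{i,k}=\C^n$ with $m\ge n$ does not guarantee that one can pick a single vector from each $W_{i,k}$ spanning $\C^n$ (e.g.\ $W_1=\C e_1\oplus\C e_2$, $W_2=W_3=\C e_3$ in $\C^3$). The paper's sentence ``Hence there exist points $x_{i,1},\ldots,x_{i,m}$\ldots'' glosses over this, but the fix is already available in the ambient setup rather than by modifying \eqref{eq:Psi}: the list $V_1,\ldots,V_m$ is chosen \emph{before} Lemma~\ref{lem:maxrank} with the proviso ``we may assume that $m\ge n$'', and nothing prevents taking $m$ larger by repeating vector fields. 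Your replica factors $\phi^k_{\zeta_{i,k,s}h_{i,k,s}(x)}$ are literally what one gets by listing $V_k$ several times in the sequence $V_1,\ldots,V_m$, so your enlargement is equivalent to enlarging $m$---which keeps the format of \eqref{eq:Psi} and the literal statement of the lemma intact. With that small rephrasing, your argument and the paper's coincide.
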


\begin{proof}
Due to the block structure of $dP(0)$ as explained above, it suffices to ensure that each of the diagonal blocks of size $n\times m$, representing the partial differentials $d_{\zeta_i}P_i(0)$, has maximal rank $n$. 

Write $dF=f\theta$ as before. Since the immersion $F$ is nondegenerate, the tangent spaces $T_{f(x)} A$ over all $x\in M$ span $\C^n$. By the identity principle for holomorphic functions on $M$, the same is true if we restrict the point $x$ to any nontrivial curve in $M$; in particular, to the loop $C_i$. Hence there exist points $x_{i,1},\ldots, x_{i,m} \in C_i\setminus \{p\}$ such that the vectors $V_k(f(x_{i,k}))$ for $k=1,\ldots,m$ span $\C^n$. Let $t_{i,1},\ldots, t_{i,m}\in (0,1)$ be such that $\gamma_i(t_{i,k})=x_{i,k}$ for $k=1,\ldots,m$. For every $k$ we choose a smooth function $\eta_{i,k}\colon [0,1]\to \R_+$, supported in a small neighborhood of $t_{i,k}$, such that $\int_0^1 \eta_{i,k} \, dt=1$. Let $h_{i,k} \colon C_i\to\R_+$ be defined by $h_{i,k}(\gamma_i(t))=\eta_{i,k}(t)$ for $t\in [0,1]$, and extend it to $C$ by setting $h_{i,k}=0$ on $C\setminus C_i$. We then have
\[
	\int_0^1 h_{i,k}(\gamma_i(t))\, V_k(f(\gamma_i(t))) \, \theta(\gamma_i(t),\dot\gamma_i(t)) \,dt  \approx
  V_k(f(x_{i,k}))\, \theta(\gamma_i(t_{i,k}),\dot\gamma_i(t_{i,k}))
\]
for $k=1,\ldots,m$. Since $\theta(\gamma_i(t_{i,k}),\dot\gamma_i(t_{i,k}))\ne 0$, the vectors on the right hand side span $\C^n$, and hence the same is true for the vectors on the left hand side, provided that all approximations are close enough. For such choices of the $h_{i,k}$'s the partial differential $d_{\zeta_i}P_i(0)$ has maximal rank $n$.
\end{proof}

By Mergelyan's theorem we approximate each of the functions $h_{i,k}$, uniformly on $C$, by holomorphic functions $g_{i,k}\colon M\to \C$. Replacing the functions $h_{i,k}$ by $g_{i,k}$ in the expression (\ref{eq:Psi}) we obtain a holomorphic map 
\begin{equation}
\label{eq:Phi}
	\Phi(\zeta,x,z) =
	\phi^1_{\zeta_{1,1}g_{1,1}(x)} \circ \cdots  \circ \phi^m_{\zeta_{l,m}g_{l,m}(x)} (z)
	\in A, 
\end{equation}
defined for $z\in A$, $x \in M$, and $\zeta$ in a neighborhood $\wt U\subset \C^{lm}$ of the origin. (This neighborhood can be chosen independent of $z$ for all points $z$ in any compact subset of $A$.)  Setting $z=f(x)$ also gives the holomorphic map
\begin{equation}
\label{eq:Phif}
	(\zeta,x) \longmapsto \Phi_f(\zeta,x)= \Phi(\zeta,x,f(x)) \in A\setminus\{0\}
\end{equation}
defined for  $\zeta \in \C^{lm}$ near the origin. Note that $\Phi_f(0,\cdotp)=f$.

If the approximations of $h_{i,j}$'s by $g_{i,j}$'s are close enough, then the corresponding period map $\zeta\mapsto \cP(\Phi_f(\zeta,\cdotp)) \in (\C^n)^l$ still has maximal rank at $\zeta=0$. Furthermore, by varying $f$ locally (keeping the functions $g_{i,j}$ fixed) we obtain a holomorphic family of maps $f\mapsto \Phi_f$ with the desired properties.

This proves Lemma \ref{lem:deformation} and hence part (b) of Theorem \ref{th:local}.
\qed

\begin{remark}
\label{rem:split}
The construction in \cite{FF:manifolds} shows that, if $X$ is a complex submanifold of a complex manifold $Y$, then $\Ascr(M,X)$ is a split Banach submanifold of $\Ascr(M,Y)$ (see \cite[p.\ 490]{Lempert} for this notion). Thus $\Ascr(M,A\setminus \{0\})$ is a (nonclosed) split Banach submanifold of the Banach space $\Ascr(M,\C^n)$. Furthermore, Lemma \ref{lem:deformation} shows that $\Ascr^*_{reg}(M,A\setminus\{0\})$ is a (nonclosed) split Banach submanifold of $\Ascr(M,A\setminus \{0\})$. By integration (\ref{eq:integral}) we get the corresponding statements for the inclusion $\IA\hra \Ascr^1(M,\C^n)$ at any nondegenerate $A$-immersion $F\in \IA$.
\qed\end{remark}

\noindent\it Proof of part (c). \rm
Fix $F\in \IA$. By part (a) we may assume that $F$ is nondegenerate. Write $dF=f\theta$, and let $\Phi_f$ be the deformation map furnished by Lemma \ref{lem:deformation}. We can approximate $f$ uniformly on $M$ by holomorphic maps $\tilde  f\colon V\to A\setminus \{0\}$, defined on small open neighborhoods $V=V_{\tilde f}$ of $M$ in a larger Riemann surface $R$. The associated deformation map $\Phi_{\tilde f}$ is then defined and holomorphic in a neighborhood $\wt U\times \wt V \subset \C^N\times R$ of $\{0\}\times M$. (Recall that the functions $g_{i,k}$ used in the construction of $\Phi_f$ are holomorphic in a neighborhood of $M$, and we may use the same functions for $\Phi_{\tilde f}$.) If $\tilde f$ is sufficiently close to $f$ on $M$, then the domain and the range of the period map $\cP(\Phi_{\tilde f})$ are so close to those of $\cP(\Phi_f)$ that the range of $\cP(\Phi_{\tilde f})$ contains the origin in $\C^{ln}$. This means that $\tilde f$ can be approximated by a map $h \in \Oscr(M,A\setminus\{0\})$ with vanishing periods. The integral $H(x)=F(p)+\int_p^x h\, d\theta$ is then a holomorphic $A$-immersion from a neighborhood of $M$ to $\C^n$ which approximates $F$ in ${\mathscr C}^1(M,\C^n)$.
\qed

%%%%%%%%%%%%%%%%%%%%%%%%%%%%%%%%%%%%%%%%%%%%%%%%%%%%%%%%%%%%%%%
%																															%
%   SECTION 6: APPROXIMATION BY DIRECTED EMBEDDINGS           %
%																															%
%%%%%%%%%%%%%%%%%%%%%%%%%%%%%%%%%%%%%%%%%%%%%%%%%%%%%%%%%%%%%%%
%
\section{Approximation by directed embeddings}
\label{sec:desing}
In this section we prove Theorems \ref{th:desing} and \ref{th:desing2} concerning the approximation of $A$-immersions by $A$-embeddings. 

\smallskip
\noindent \it Proof of Theorem \ref{th:desing}. \rm 
We may assume that $M$ is a smoothly bounded domain in an open Riemann surface $R$. Let $F\colon M\to \C^n$ be an $A$-immersion. In view of Theorem \ref{th:local} we may assume that $F$ is holomorphic in a neighborhood of $M$ in $R$, and is nondegenerate in the sense of Def.\ \ref{def:nondegenerate}. We associate to $F$ the {\em difference map}
\[
	\delta F\colon M\times M\to \C^n,\qquad \delta F(x,y)=F(y)-F(x).
\]
Clearly $F$ is injective if and only if $(\delta F)^{-1}(0)= D_M=\{(x,x)\colon x\in M\}$, the diagonal of $M\times M$.

Since $F$ is an immersion, it is locally injective, and hence there is an open neighborhood $U\subset M\times M$ of $D_M$ such that $\delta F$ does not assume the value $0\in \C^n$ in $\overline U\setminus D_M$. To prove the theorem, it suffices to find arbitrarily close to $F$ another $A$-immersion $\wt F\colon M\to\C^n$ whose difference map $\delta \wt F$, restricted to $M\times M\setminus U$, is transverse to the origin $0\in \C^n$. Indeed, since $\dim_\C M\times M=2<n$, this will imply that $\delta\wt F$ does not assume the value zero in $M\times M\setminus U$, so $\wt F(x)\ne \wt F(y)$ if $(x,y)\in M\times M\setminus U$. If on the other hand $(x,y)\in U \setminus D_M$, then $\wt F(x)\ne \wt F(y)$ provided that $\wt F$ is sufficiently close to $F$. Thus $\wt F$ is an injective immersion, hence an embedding. 

A map $\wt F$ with these properties will be constructed by the standard transversality argument (see Abraham \cite{Abraham} or \cite[Sec.\ 7.8]{F:book}). We need to find a neighborhood $\Omega \subset \C^N$ of the origin in a complex Euclidean space and a holomorphic map $H\colon \Omega \times M \to \C^n$ such that $H(0,\cdotp)=F$ and the difference map $\delta H\colon \Omega \times M\times M \to \C^n$, defined by 
\begin{equation}
\label{eq:difference}
	\delta H(\zeta,x,y) = H(\zeta,y)-H(\zeta,x), \qquad \zeta\in \Omega, \ \ x,y\in M, 
\end{equation}
is a {\em submersive family of maps}, meaning that its partial differential 
\begin{equation}
\label{eq:pd}
	d_\zeta|_{\zeta=0} \delta H(\zeta,x,y) \colon \C^N \to \C^n
\end{equation}
is surjective for any $(x,y)\in M\times M\setminus U$. By openess of this condition and compactness of $M\times M \setminus U$ it follows that the partial differential $d_\zeta \delta H$ is surjective for all $\zeta$ in a neighborhood $\Omega'\subset \Omega$ of the origin in $\C^N$. Hence the map $\delta H \colon M\times M\setminus U\to\C^n$ is transverse to any submanifold of $\C^n$, in particular, to the origin $0\in \C^n$. The standard argument then shows that for a generic member $H(\zeta, \cdotp)\colon M \to \C^n$ of this family, the difference map $\delta H(\zeta,\cdotp)$ is also transverse to $0\in\C^n$ on $M\times M\setminus U$. By choosing the point $\zeta$ sufficiently close to $0$ we thus obtain a desired $A$-embedding $\wt F=H(\zeta,\cdotp)$.  

We now construct a deformation family $H$ with the above properties. 

Fix a nowhere vanishing holomorphic $1$-form $\theta$ on $R$ and write $dF=f\theta$, with $f\colon M\to A\setminus \{0\}$ a holomorphic map. Pick a neighborhood $U\subset M\times M$ of $D_M$ such that $\overline U\cap (\delta F)^{-1}(0)=D_M$. 

\begin{lemma}
\label{lem:pq}
{\rm (Notation as above.)} 
For any $(p,q)\in M\times M\setminus D_M$ there exists a deformation family $H=H^{(p,q)}(\zeta,\cdotp)$ as above, with $\zeta\in \C^n$, such that the differential $d_\zeta|_{\zeta=0} \delta H(\zeta,p,q) \colon \C^n \to \C^n$ is an isomorphism. 
\end{lemma}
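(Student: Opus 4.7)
The plan is to adapt the deformation from Lemma \ref{lem:deformation} by appending $n$ extra flow parameters $\eta\in\C^n$ tailored to the pair $(p,q)$, and then to solve away the original parameters via the implicit function theorem applied to the period map. This will produce an $n$-parameter family $H(\eta,\cdotp)$ of $A$-immersions with $H(0,\cdotp)=F$ along which the difference $H(\eta,q)-H(\eta,p)$ moves isomorphically in $\C^n$ at $\eta=0$; relabeling $\eta$ as $\zeta$ will then give the desired $H^{(p,q)}$.

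First I would choose a smoothly embedded arc $\gamma\colon[0,1]\to M$ from $p$ to $q$ that is disjoint from the wedge $C=\bigcup_jC_j$ (arrangeable by picking homology representatives through a base point off $\gamma$), with $C\cup\gamma$ an $\Oscr(M)$-convex compact subset of $M$ so that Mergelyan approximation is available there. Since $F$ is nondegenerate and $f$ is holomorphic, the identity principle applied to the restriction of $f$ to the real-analytic arc $\gamma$ guarantees that the tangent spaces $T_{f(x)}A$ for $x\in\gamma$ still span $\C^n$. I can therefore pick interior points $y_1,\dots,y_n\in\gamma$ and indices $k_1,\dots,k_n\in\{1,\dots,m\}$ such that $V_{k_1}(f(y_1)),\dots,V_{k_n}(f(y_n))$ form a basis of $\C^n$, where $V_1,\dots,V_m$ are the tangential vector fields on $A$ used in Lemma \ref{lem:deformation}.

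Next I augment the deformation $\Psi$ from the proof of Lemma \ref{lem:deformation} by precomposing with $n$ new flows. Choose smooth bump functions $\tilde h_1,\dots,\tilde h_n$ on $C\cup\gamma$, each supported in a small neighborhood of $y_j$ in $\gamma$ and vanishing on $C$, normalized so that $\int_\gamma\tilde h_j(\gamma(t))\,\theta(\gamma(t),\dot\gamma(t))\,dt\approx 1$; retain the functions $h_{i,k}$ of Lemma \ref{lem:maxrank} (supported on $C_i$, vanishing on $\gamma$). Set
\[
\Psi(\zeta,\eta,x) = \phi^{k_1}_{\eta_1\tilde h_1(x)}\circ\cdots\circ\phi^{k_n}_{\eta_n\tilde h_n(x)}\circ\phi^1_{\zeta_{1,1}h_{1,1}(x)}\circ\cdots\circ\phi^m_{\zeta_{l,m}h_{l,m}(x)}(f(x)).
\]
Mergelyan approximation on $C\cup\gamma$ replaces all these bumps by holomorphic functions on $M$, producing a holomorphic $\Psi$ in $(\zeta,\eta)$ near the origin with $\Psi(0,0,\cdotp)=f$. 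At $(\zeta,\eta)=0$ the $\zeta$-partial differential of $\cP(\Psi)$ has maximal rank $ln$ by Lemma \ref{lem:maxrank}, while the $\eta$-partial differential is arbitrarily small (each integral $\int_{C_i}\tilde g_j\, V_{k_j}(f)\,\theta$ is small, since $\tilde g_j$ approximates a function vanishing on $C$). The implicit function theorem then yields a holomorphic $\zeta=\zeta(\eta)$ near $0$ with $\zeta(0)=0$ and $\cP(\Psi(\zeta(\eta),\eta,\cdotp))=0$, so $H(\eta,x)=F(p)+\int_p^x\Psi(\zeta(\eta),\eta,\cdotp)\,\theta$ is an $n$-parameter family of $A$-immersions with $H(0,\cdotp)=F$.

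Finally I would compute $\partial_\eta|_0\,\delta H(\eta,p,q)=\partial_\eta|_0\int_p^q\Psi(\zeta(\eta),\eta,\cdotp)\,\theta$ by integrating along $\gamma$. The direct $\eta$-contribution is approximately the matrix $[V_{k_1}(f(y_1))\mid\cdots\mid V_{k_n}(f(y_n))]$ (by concentration of $\tilde g_j$ near $y_j$ and the normalization of $\tilde h_j$), which is invertible by construction. The indirect $\zeta(\eta)$-contribution is negligible because $\partial_\eta|_0\zeta(\eta)$ is small (forced by the smallness of the $\eta$-partial of $\cP$) and each integral $\int_\gamma g_{i,k}V_k(f)\theta$ is small (since $g_{i,k}$ approximates $h_{i,k}$ vanishing on $\gamma$). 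Provided the Mergelyan approximations are sufficiently close, the differential of $\delta H$ in $\eta$ at the origin is a small perturbation of an isomorphism, hence itself an isomorphism. The main obstacle is the coupling between the period constraints and the difference-map submersion condition; the remedy is the support disjointness of the two families of bumps on the admissible set $C\cup\gamma$, which forces the cross-terms in Mergelyan approximations to be arbitrarily small.
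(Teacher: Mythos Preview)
Your argument is correct and follows essentially the same route as the paper: an arc from $p$ to $q$ disjoint from the homology basis $C$, bump functions concentrated at points of the arc where the tangent vectors $V_{k_j}(f(y_j))$ span $\C^n$, Mergelyan approximation on the admissible set $C\cup\gamma$, and period correction via the implicit function theorem using the deformation of Lemma~\ref{lem:deformation}, with the support disjointness forcing the cross-terms to be $O(\epsilon)$ so that the $\eta$-differential of $\delta H$ at the origin remains invertible. The only cosmetic difference is organizational: the paper keeps the period-correcting spray $\Phi$ from Lemma~\ref{lem:deformation} fixed and composes it with the new arc-deformation $\Psi_f$ as maps, writing $\Phi(\rho(\zeta),x,\Psi_f(\zeta,x))$, whereas you merge both families of bumps into a single long composition of flows and re-approximate everything at once; either packaging yields the same estimates.
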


Suppose that the lemma holds. Clearly $H$ satisfies the same property for all pairs $(p',q')\in M\times M$ close to $(p,q)$. Since $M\times M\setminus U$ is compact, it is covered by finitely many such neighborhoods. The superposition of the corresponding deformation families will yield a deformation family $H$ for which the differential (\ref{eq:pd}) is surjective for any $(x,y)\in M\times M\setminus U$. 

\begin{proof}
Let $\Lambda \subset M$ be a smooth embedded arc connecting $p$ to $q$. Pick a point $p_0\in M\setminus\Lambda$ and closed loops $C_1,\ldots, C_l \subset M\setminus\Lambda$ based at $p_0$ and forming a basis of $H_1(M;\Z)$. Set $C=\bigcup_{j=1}^l C_j$. Let $\gamma_j\colon [0,1]\to C_j$ ($j=1,\ldots,l$) and $\lambda\colon [0,1]\to \Lambda$ be smooth parametrizations of the respective curves.  

Since $F$ is nondegenerate (Def.\ \ref{def:nondegenerate}), there exist tangential holomorphic vector fields $V_1,\ldots,V_n$ on $A$ and points $x_1,\ldots, x_n\in \Lambda\setminus \{p,q\}$ such that, setting $z_i=f(x_i)\in A$, the vectors $V_i(z_i)$ for $i=1,\ldots,n$ span $\C^n$. (Such points exist since a nontrivial arc $\Lambda$ in $M$ is a determining set for holomorphic functions on $M$, and hence the tangent spaces $T_{f(x)} A$ over all points $x\in \Lambda$ have the same span as the tangent spaces $T_{f(x)} A$ over all points $x\in M$. Of course one could also move the curve $\Lambda$ a little to insure this property.) Let $t_i\in (0,1)$ be such that $\lambda(t_i)=x_i$. Let $\phi^i_t$ denote the flow of $V_i$. Choose smooth functions $h_i\colon C\cup \Lambda \to \R_+$ $(i=1,\ldots,n)$ that vanish at the endoints $p,q$ of $\Lambda$ and on the curves $C$; their value on $\Lambda$ will be chosen later. Let $\zeta=(\zeta_1,\ldots,\zeta_n) \in \C^n$. As in the proof of Lemma \ref{lem:deformation} we consider the map 
\[
	\psi(\zeta,x)=\phi^1_{\zeta_{1}h_{1}(x)} \circ \cdots  \circ \phi^n_{\zeta_{n}h_{n}(x)} (f(x)) 
	\in A\setminus\{0\},\quad x\in C\cup\Lambda,
\] 
which is holomorphic in $\zeta =(\zeta_1,\ldots,\zeta_n)\in\C^n$ near the origin. Note that $\psi(0,\cdotp)=f$ and $\psi(\zeta,x)=f(x)$ if $x\in C$ (since $h_i=0$ on $C$). We have 
\[
	\frac{\di \psi(\zeta,x)}{\di \zeta_{i}}\bigg|_{\zeta=0} = h_{i}(x)\, V_i(f(x)),\quad i=1,\ldots,n.
\]
By choosing the function $h_i$ to have its support concentrated near the point $x_i=\lambda(t_i) \in \Lambda$, we can arrange that for all $i=1,\ldots,n$ we have
\[
	\int_0^1 h_{i}(\lambda(t))\, V_i(f(\lambda(t))) \, \theta(\lambda(t),\dot\lambda(t)) \,dt
	\approx V_i(z_i) \, \theta(\lambda(t_i),\dot{\lambda}(t_i)) \in \C^n.
\]
Assuming that the approximations are close enough, the vectors on the left hand side above form a basis of $\C^n$. 

Fix a number $\epsilon>0$; its precise value will be chosen later. We apply Mergelyan's theorem to find holomorphic functions $g_i\colon M\to\C$ such that 
\[
	\sup_{C\cup\Lambda} |g_i-h_i| <\epsilon \quad \text{for}\ \ i=1,\ldots,n. 
\]
In the analogy with (\ref{eq:Phi}) and (\ref{eq:Phif}) we define holomorphic maps
\begin{eqnarray}
\label{eq:Psi2}
	\Psi(\zeta,x,z) &=&
	\phi^1_{\zeta_{1}g_{1}(x)} \circ \cdots  \circ \phi^n_{\zeta_{n}g_{n}(x)} (z) \in A,  \cr
	\Psi_f(\zeta,x) &=& \Psi(\zeta,x,f(x)) \in A,
\end{eqnarray}
where $x\in M$, $z\in A$, and $\zeta$ is near the origin in $\C^n$. Note that $\Psi_f(0,\cdotp)=f$.
If the approximations of $h_i$ by $g_i$ are close enough, then the vectors
\begin{eqnarray}
\label{eq:derivatives}
	\frac{\di}{\di \zeta_{i}}\bigg|_{\zeta=0} \int_0^1 \Psi_f(\zeta,\lambda(t)) 
	\,\theta(\lambda(t),\dot\lambda(t)) \,dt  \qquad \qquad \qquad\qquad \cr
	\qquad \qquad \qquad 
	= \int_0^1 g_{i}(\lambda(t))\, V_i(f(\lambda(t))) \, \theta(\lambda(t),\dot\lambda(t)) \,dt \in\C^n
\end{eqnarray}
are still close enough to the vectors $V_i(z_i) \, \theta(\lambda(t_i),\dot{\lambda}(t_i))$ for $i=1,\ldots,n$ so that they are linearly independent. 

The $\C^n$-valued $1$-form $\Psi_f(\zeta,\cdotp) \,\theta$ on $M$ need not be exact. We shall now correct its periods to zero by using the tools from Sec.\ \ref{sec:local}. 

From the Taylor expansion of the flow of a vector field we see that
\[
	\Psi_f(\zeta,x)=f(x)+\sum_{i=1}^n \zeta_i g_i(x) V_i(f(x)) + o(|\zeta|).
\]
Since $|g|<\epsilon$ on $C$, the periods over the loops $C_j$ can be estimated by 
\begin{equation}
\label{eq:estimate-periods}
	\left| \int_{C_j} \Psi_f(\zeta,\cdotp)\, \theta \right| \le \eta_0\epsilon |\zeta|
\end{equation}
for some constant $\eta_0>0$ and for sufficiently small $|\zeta|$. 

Lemma \ref{lem:deformation} gives holomorphic maps $\Phi(\wt \zeta,x,z)$ and $\Phi_f(\wt\zeta,x)=\Phi(\wt\zeta,x,f(x))$ (see (\ref{eq:Phi}) and (\ref{eq:Phif})), with the parameter $\wt \zeta$ near $0\in\C^{\wt N}$ for some $\wt N\in\N$ and $x\in M$, such that $\Phi(0,x,z)=z$ and the differential of the associated period map $\wt \zeta \mapsto \cP(\Phi_f(\wt\zeta,\cdotp)) \in \C^{ln}$ (see (\ref{eq:P})) at the point $\wt\zeta=0$ has maximal rank equal to $ln$. The same is true if we let the map 
$f\in \Ascr(M,A\setminus\{0\})$ vary locally near the given initial map. In particular, we can replace $f$ by the deformation family $\Psi_f(\zeta,\cdotp)$ and consider the composed map 
\[
	\C^{\wt N}\times \C^n\times M  \ni (\wt \zeta,\zeta,x) \longmapsto \Phi(\wt\zeta,x,\Psi_f(\zeta,x)) 
	\in A\setminus \{0\}
\]
which is defined and holomorphic for $(\wt \zeta,\zeta)$ near the origin in $\C^{\wt N}\times \C^n$ and for $x\in M$. The implicit function theorem furnishes a holomorphic map $\wt \zeta=\rho(\zeta)$ near $\zeta=0\in\C^n$, with $\rho(0)=0\in\C^{\wt N}$, such that the $\C^n$-valued 1-form on $M$, defined by
\[
	\Theta_f(\zeta,x,v)= \Phi(\rho(\zeta),x,\Psi_f(\zeta,x))\, \theta(x,v), \quad x\in M,\ v\in T_x M,
\]
has vanishing periods over the curves $C_j$ for every fixed $\zeta \in\C^n$ near $0$. (The map $\rho=(\rho_1,\ldots,\rho_n)$ also depends on $f$, but we shall suppress this dependence in our notation.) It follows that the integral
\begin{equation}
\label{eq:H}
	H_F(\zeta,x) =  F(p_0)+\int_{p_0}^x \Theta_f(\zeta,\cdotp,\cdotp) =
	F(p_0) + \int_0^1 \Theta_f(\zeta,\gamma(t),\dot\gamma(t)) 
\end{equation}
is independent of the choice of the path $\gamma$ from $p_0$ to $x\in M$. Clearly $H_F(0,\cdotp)=F$, and $H_F(\zeta,\cdotp) \colon M\to\C^n$ is an $A$-immersion for every $\zeta\in\C^n$ sufficiently close to $0$. Furthermore, in view of  (\ref{eq:estimate-periods}) we have the estimate
\[
	|\rho(\zeta)| \le \eta_1\epsilon |\zeta|
\]
for some $\eta_1>0$. The map $\Phi(\wt \zeta,x,z)$ is of the form (\ref{eq:Phi}), i.e., it is obtained by composing the flows of certain holomorphic vector fields $W_j$ on $A$ for the times $\wt \zeta_j \wt g_j(x)$, where $\wt g_j\in\Oscr(M)$. The Taylor exansion of the flow, together with the above estimate on $\rho(\zeta)$, give 
\begin{eqnarray*}
	\left| \Phi(\rho(\zeta),x,\Psi_f(\zeta,x)) - \Psi_f(\zeta,x) \right| &=& \left| \sum \rho_j(\zeta) \wt g_j(x)   W_j(\Psi_f(\zeta,x)) + o(|\zeta|) \right| \cr
	&\le& \eta_2\epsilon |\zeta| \cr
\end{eqnarray*}
for some $\eta_2>0$ and for all $x\in M$ and all $\zeta$ near the origin in $\C^n$. By applying this estimate on the curve $\Lambda$ (with the endpoints $p$ and $q$) we get 
\[
	\left| \int_0^1 \Theta_f(\zeta,\lambda(t),\dot\lambda(t)) - 
	\int_0^1 \Psi_f(\zeta,\lambda(t)) \,\theta(\lambda(t),\dot\lambda(t)) \,dt \right| \le \eta_3 \epsilon |\zeta|
\]
for some $\eta_3>0$. If $\epsilon>0$ is chosen small enough, it follows that the derivatives
\[
	\frac{\di}{\di \zeta_{i}}\bigg|_{\zeta=0} \int_0^1 \Theta_f(\zeta,\lambda(t),\dot\lambda(t)) \in\C^n,
	\qquad i=1,\ldots,n,   
\]
are so close to the vectors (\ref{eq:derivatives}) that they are $\C$-linearly independent. In view of (\ref{eq:H}) we have
\[
	\int_0^1 \Theta_f(\zeta,\lambda(t),\dot\lambda(t)) = H_F(\zeta,q)-H_F(\zeta,p)=\delta H_F(\zeta,p,q),
\]
where $\delta H_F$ is the difference map (\ref{eq:difference}). Hence the above says that the partial differential 
\[
	\frac{\di}{\di\zeta}\big|_{\zeta=0} \delta H_F(\zeta,p,q) \colon \C^n\to\C^n
\]
is an isomorphism. This proves Lemma \ref{lem:pq}.
\end{proof}

The family $H_F$ obtained above is holomorphically dependent also on $F$ in a neighborhood of a given initial $A$-immersion $F_0$. In particular, if $F(\eta,\cdotp)\colon M\to \C^n$ is a family of holomorphic $A$-immersions depending holomorphically on a complex parameter $\eta$, then $H_{F(\eta,\cdotp)}(\zeta,\cdotp)$ depends holomorphically on $(\zeta,\eta)$. This allows us to compose any finite number of such deformation families. We explain this operation for two families. Suppose that $H=H_F(\zeta,\cdotp)$ and $G=G_F(\eta,\cdotp)$ are  deformation families with $H_F(0,\cdotp)=G_F(0,\cdotp)=F$. We define the composed deformation family by 
\[
	(H \sharp G)_F(\zeta,\eta,x)=G_{H_F(\zeta,\cdotp)} (\eta,x),\quad x\in M.
\]
Clearly we have
\[
	(H\sharp G)_F(0,\eta,\cdotp)=G_F(\eta,\cdotp), \quad (H\sharp G)_F(\zeta,0,\cdotp)=H_F(\zeta,\cdotp).
\]
The operation $\sharp$ extends by induction to finitely many factors; it is associative, but not commutative. (This operation is similar to the composition of sprays that was introduced by Gromov \cite{Gromov:Oka}; see also \cite[p.\ 246]{F:book}.)

We can now complete the proof of Theorem \ref{th:desing}. The above construction gives a finite open covering $\cU=\{U_i\}_{i=1}^m$ of the compact set $M \times M\setminus U$ and deformation families $H^i=H^i(\zeta^i,\cdotp)\colon M\to \C^n$, with  $H^i(0,\cdotp)=F$, where $\zeta^i=(\zeta^i_1,\ldots,\zeta^i_{k_i}) \in \Omega_i\subset \C^{k_i}$, so that the difference map $\delta H^i(\zeta^i,p,q)$ is submersive at $\zeta^i=0$ for all $(p,q)\in U_i$. By taking $\zeta=(\zeta^1,\ldots,\zeta^m)\in \C^N$, with $N=\sum_{i=1}^m k_i$, and setting
\[
	H(\zeta,x) = (H^1 \sharp H^2 \sharp \cdots \sharp H^m)(\zeta^1,\ldots,\zeta^m,x)
\]
we obtain a deformation family such that $H(0,\cdotp)=F$ and $\delta H$ is submersive everywhere on $M\times M\setminus U$ for all $\zeta\in \C^N$ sufficiently close to the origin. This completes the proof of Theorem \ref{th:desing}. 
\qed 
\smallskip

\noindent \it Proof of Theorem \ref{th:desing2}. \rm
Let $F\colon M\to \C^n$ be an $A$-immersion of an open Riemann surface to $\C^n$. Fix a number $\epsilon>0$ and a compact set $K\subset M$. Write $F_0=F$ and $\epsilon_0=\epsilon$. Choose an exhaustion of $M$ by an increasing sequence $M_0\subset M_1\subset \cdots \bigcup_{j=0}^\infty M_j = M$ of smoothly bounded compact domains such that every $M_j$ is holomorphically convex in $M$ and $K\subset M_0$. By Theorem \ref{th:desing} we can find an $A$-embedding $\wt F_1\colon M_1\to \C^n$ such that 
\[
	||\wt F_1-F_0||_{M_1} := \sup_{x\in M_1} |\wt F_1(x)-F_0(x)|<\epsilon/4. 
\]

Since $A\setminus\{0\}$ is assumed to be an Oka manifold, Corollary \ref{cor:Runge} gives an $A$-immersion $F_1 \colon M\to \C^n$ such that $F_1$ is an embedding on $M_1$, $||F_1-\wt F_1||_{M_1}<\epsilon/4$, and hence $||F_1-F_0||_{M_1}<\epsilon/2$. (Although Corollary \ref{cor:Runge} is proved in Sec.\ \ref{sec:Oka} below, its proof is independent of the results in this section.) 
%  , so there is no logical problem here.) 

Pick a number $\epsilon_1$ with $0<\epsilon_1<\epsilon/4$ such that every immersion $G\colon M\to \C^n$ satisfying $||G-F_1||_{M_1} <\epsilon_1$ is an embedding on $M_0$. (Such $\epsilon_1$ exists by the Cauchy estimates.) By applying the above argument to $F_1$ we find an $A$-immersion $F_2\colon M\to \C^n$ which is an embedding on $M_2$ and satisfies $||F_2-F_1||_{M_2} <\epsilon_1/2$. Pick a number $\epsilon_2$ with $0<\epsilon_2<\epsilon_1/4$ such that every holomorphic map $G\colon M\to \C^n$ satisfying $||G-F_2||_{M_2} <\epsilon_2$ is an embedding on $M_1$. Continuing inductively, we find a sequence of $A$-immersions $F_j\colon M\to\C^n$ and and a sequence of numbers $\epsilon_j>0$ such that the following hold for every $j=1,2\ldots$:
\begin{itemize}
\item[\rm (a)]  $F_{j}$ is an embedding on $M_{j}$, 
\item[\rm (b)]  $||F_{j}-F_{j-1}||_{M_{j}} < \epsilon_{j-1}/2$, 
\item[\rm (c)]  $0<\epsilon_{j} <\epsilon_{j-1}/4$,  and 
\item[\rm (d)]  every holomorphic map $G\colon M\to \C^n$ satisfying $||G-F_j||_{M_{j}} <\epsilon_{j}$ is an embedding on $M_{j-1}$.      
\end{itemize}
Property (c) implies that $\sum_{k=j+1}^\infty \epsilon_k<\epsilon_j/2$ for every $j=0,1,\ldots$. By property (b) we see that the limit $\wt F=\lim_{j\to\infty} F_j\colon M\to \C^n$ exists and satisfies $||\wt F-F||_{K}<\epsilon$. Furthermore, we have 
\[
	||\wt F-F_j||_{M_{j}} \le \sum_{k=j}^\infty ||F_{k+1}-F_k||_{M_{j}}
	< \frac{\epsilon_j}{2} + \sum_{k=j+1}^\infty \epsilon_k < \epsilon_j,
\]
and hence $\wt F$ restricted to $M_{j-1}$ is an $A$-embedding by properties (a) and (d). Since this holds for every $j$, we see that $\wt F\colon M\to \C^n$ is an $A$-embedding.
\qed

Theorems \ref{th:desing} and \ref{th:desing2} allow us to extend the known existence theorems for immersed null curves to the embedded case. The following corollary is of particular interest.

\begin{corollary}\label{co:CY}
Let $N$ be an orientable noncompact smooth real surface without boundary, and let $\Omega\subset\C^3$ be a convex domain. Then there exists a complex structure $J$ on $N$ such  that $(N,J)$ embeds as a complete proper null curve in $\Omega$.
\end{corollary}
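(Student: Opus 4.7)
The plan is to combine the construction of complete properly immersed null curves in convex domains by Alarc\'on--L\'opez \cite{AL1} with the desingularization provided by Theorem \ref{th:desing2}, applied to the null quadric $A$ of \eqref{eq:null}. This $A$ meets the hypotheses of the theorem: $A\setminus\{0\}$ is smooth and is Oka by Example \ref{eq:quadrics}. From \cite{AL1} I would obtain a complex structure $J$ on $N$ together with a complete proper null immersion $F_0\colon M\to\Omega$, where $M=(N,J)$. The goal is to deform $F_0$ to a null embedding of $M$ whose image still lies in $\Omega$ and which is still complete and proper.

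To that end, fix exhaustions $M_0\Subset M_1\Subset\cdots$ of $M$ by smoothly bounded $\Oscr(M)$-convex compact domains with $\bigcup_j M_j=M$, and $K_0\Subset K_1\Subset\cdots$ of $\Omega$ by compact sets chosen so that $F_0(M\setminus M_j)\subset\Omega\setminus K_j$ for every $j$. I would construct inductively null immersions $F_j\colon M\to\C^3$ and numbers $\epsilon_j>0$ such that for every $j\ge 1$: (i) $F_j(M)\subset\Omega$ and $F_j|_{M_j}$ is an embedding; (ii) $\|F_j-F_{j-1}\|_{\mathscr C^1(M_j)}<\epsilon_j$; (iii) $F_j(M\setminus M_j)\subset\Omega\setminus K_j$; and (iv) every null immersion $G\colon M\to\C^3$ with $\|G-F_j\|_{M_j}<\sum_{k>j}\epsilon_k$ is an embedding on $M_{j-1}$. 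Each inductive step combines Theorem \ref{th:desing} on $M_j$ (producing a null embedding close to $F_{j-1}|_{M_j}$) with Corollary \ref{cor:Runge} (extending this approximation to a null immersion on all of $M$, arbitrarily close to $F_{j-1}$ on $M_j$ in $\mathscr C^1$-norm).

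The limit $\wt F=\lim_{j\to\infty}F_j$ then exists in $\mathscr C^1_{\mathrm{loc}}(M)$ by (ii) and is a null immersion; it is a null embedding, exactly as in the proof of Theorem \ref{th:desing2}, thanks to (iv). Properness of $\wt F\colon M\to\Omega$ follows from (iii), since $\wt F(M\setminus M_j)$ is $\mathscr C^0$-close to $F_j(M\setminus M_j)\subset\Omega\setminus K_j$, so $\wt F^{-1}(K_j)\Subset M_j$ for every $j$. Completeness is inherited from $F_0^*\langle\cdot,\cdot\rangle$ by choosing each $\epsilon_j$ small enough in $\mathscr C^1(M_j\setminus M_{j-1})$ that the induced metric is distorted there by less than a factor of $2$; a summable sequence of such bounded distortions preserves metric completeness on $M$.

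The main obstacle is arranging condition (iii): Corollary \ref{cor:Runge} only approximates on $M_j$ and gives no direct control on $M\setminus M_j$, so the naive extension could push the image back into $K_j$ or even outside of $\Omega$. The remedy is to upgrade Corollary \ref{cor:Runge} via Theorem \ref{th:Oka} by working on a slightly larger Runge neighborhood built from $M_j$ together with short outgoing arcs attached along each boundary component and landing deep inside $\Omega\setminus K_j$, so the extension is constrained on those arcs to move toward the boundary. At the $j$-th step one then selects the tolerance smaller than $\dist(F_{j-1}(M_j),\C^3\setminus\Omega)$, smaller than the distance from $F_{j-1}(M\setminus M_{j-1})$ to $K_j$, smaller than the injectivity margin from the previous step, and smaller than the metric-distortion bound, and only then invokes Theorem \ref{th:desing} followed by this refined extension. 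With this coordinated bookkeeping the induction delivers the desired complete proper null embedding $(N,J)\hookrightarrow\Omega$.
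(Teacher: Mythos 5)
The paper's proof takes a simpler route than yours. It does not first complete the [AL1] construction and then desingularize the resulting immersion on the (noncompact) limit surface. Instead it uses that \cite{AL1} produces the null curve as a limit of null curves $F_j\colon M_j\to\C^3$ on compact bordered pieces $M_j$ exhausting a domain $D$, and that (by \cite[Lemma 3]{AL1}) the limit retains completeness, properness and image in $\Omega$ whenever consecutive $F_j$'s are sufficiently close on $M_{j-1}$. The paper merely inserts Theorem \ref{th:desing} at each finite stage to replace $F_j$ by a nearby embedding; because this perturbation is as small as needed, the [AL1] control survives and the limit is embedded. All the hard global bookkeeping (image in $\Omega$, completeness, properness) is done by the [AL1] machinery, not redone.

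Your proposal instead starts from the finished immersion $F_0\colon M\to\Omega$ and runs a fresh desingularization induction on $M$, and this is where the gap is. Condition (iii), $F_j(M\setminus M_j)\subset\Omega\setminus K_j$, and the more basic requirement $F_j(M)\subset\Omega$, are not delivered by the tools you invoke: Corollary \ref{cor:Runge} and Theorem \ref{th:Oka} extend from $M_j$ to $M$ with approximation only on $M_j$, so the image on $M\setminus M_j$ is completely uncontrolled and nothing prevents $F_j$ from re-entering $K_j$, failing properness, or leaving $\Omega$. Attaching short outgoing arcs to $M_j$, as you suggest, constrains the map only on those arcs --- a one-dimensional set --- while the remainder of the noncompact complement stays free; Mergelyan/Oka extension from an admissible set $M_j\cup(\text{arcs})$ gives no control off that set. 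Compare with Theorem \ref{th:proper}, whose proof achieves properness by a substantially more elaborate scheme (splitting the annuli $M_j\setminus\mathring M_{j-1}$ into discs and alternately pushing $|F_1|$ and $|F_2|$ large while pinning one coordinate via Theorem \ref{th:Mergelyan2}), and even that only guarantees properness into all of $\C^3$, not into a prescribed convex $\Omega\subsetneq\C^3$. Your plan thus needs control strictly beyond what the paper's lemmas supply; the paper's proof avoids this entirely by perturbing \emph{inside} the [AL1] induction rather than after it.
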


If one takes $\Omega$ to be a bounded convex domain contained in $\C^3\setminus\{z_3 =0\}$, then the correspondence $\cT$ (\ref{eq:T}) applies and embeds $(N,J)$ as a complete bounded null curve in $SL_2(\C)\setminus\{z_{11}=0\}$.

\begin{proof}
Let $M$ be an open Riemann surface diffeomorphic to $N$. It is shown in \cite{AL1} that there exist an increasing sequence of smoothly bounded Runge domains $M_1\subset M_2\subset\ldots \subset M$ and null curves $F_j\colon M_j\to\C^3$, $j\in\N$, such that the limit map $F=\lim_{j\to\infty} F_j\colon\bigcup_{j\in\N} M_j\to \C^3$  exists and is a complete null curve mapping the domain $D=\bigcup_{j\in\N} M_j \subset M$ properly into $\Omega$. Furthermore, we can arrange that $D$ is homeomorphic (and hence diffeomorphic) to $M$, and hence to $N$. Let $J$ be the complex structure on $N$ obtained from the complex structure on $D$ via this diffeomorphism.

Now Theorem \ref{th:desing} insures that such $F_j$'s can be chosen to be embeddings. If $F_j$ is close enough to $F_{j-1}$ on $M_{j-1}$ for all $j>1$ (see \cite[Lemma 3]{AL1}), then the limit null curve $F\colon D\to \Omega$ is embedded as well.
%  (see the proof of Theorem \ref{th:desing2} for the details of this argument). 
\end{proof}

Corollary \ref{co:CY} is motivated by the question whether there exist complete bounded minimal surfaces in $\R^3$; a classical problem in the theory of minimal surfaces, known as the {\em Calabi-Yau problem}. The answer to this question strongly depends on whether self-intersections are allowed or not. In the immersed case, such surfaces exist and may have arbitrary topological type \cite{Na,FMM,AL1}. On the other hand, complete embedded minimal surfaces with finite genus and countably many ends are necessarily proper in $\R^3$ \cite{CM,MPR}, hence unbounded. The general problem remains open for embedded surfaces. The corresponding question for immersed null curves in $\C^3$ was answered affirmatively in \cite{AL1}; however, the methods in \cite{AL1} do not enable one to avoid the self-intersections, nor to control the complex structure on the curve. The former question (see \cite[Problem 2]{MUY1}) is solved by Corollary \ref{co:CY}. For the latter, a technique for constructing complete bounded complex curves in $\C^2$ that are normalized by any given bordered Riemann surface has been  developed recently in \cite{AF}. The analogous problems for minimal surfaces in $\R^3$ and null curves in $\C^3$ remain open (see \cite[Question 2]{AF}).

%\begin{proof}
%Let $\cT$ denote the map (\ref{eq:T}). If $F=(F_{ij})_{i,j\in\{1,2\}}\colon M\to SL_2(\C)\setminus\{z_{11}=0\}$ is a null curve, then $\cT^{-1}\circ F=(H_1,H_2,H_3)\colon M\to \C^3$ is a null curve in $\C^3$. It is a limit of a sequence of embedded null curves $H^n=(H^n_1,H^n_2,H^n_3)\colon M\to\C^3$ by Theorem \ref{th:desing}. Since $H_3=1/F_{11}$ (see \eqref{eq:T}), $H_3$ never vanishes on $M$, and hence the same holds for $H^n_3$ for all sufficiently large $n$. The embedded null curves $\cT\circ H^n\colon M\to SL_2(\C)$ for large $n$ then approximate $F$. 
%\end{proof}

%%%%%%%%%%%%%%%%%%%%%%%%%%%%%%%%%%%%%%%%%%%%%%%%%%%%%
%																				            %
%  SEC 7: OKA PRINCIPLE FOR DIRECTED IMMERSIONS     %
%																				            %
%%%%%%%%%%%%%%%%%%%%%%%%%%%%%%%%%%%%%%%%%%%%%%%%%%%%%
%
\section{The Oka principle and Mergelyan's theorem for $A$-immersions}
\label{sec:Oka}

In this section we prove Theorem \ref{th:Oka} -- the Oka principle for $A$-immersions. The same proof also gives the Mergelyan approximation theorem for $A$-immer\-sions; see Theorem \ref{th:Mergelyan} below.

We begin by introducing a suitable type of sets  for the Mergelyan theorem. We shall not strive for the most general possible situation; the type of sets in the following definition suffice in most geometric applications.

\begin{definition}
\label{def:admissible}
A compact subset $S$ of an open Riemann surface $M$ is said to be {\em admissible} if $S=K\cup C$, where $K=\bigcup \ol D_j$ is a union of finitely many pairwise disjoint, compact, smoothly bounded domains $\ol D_j$ in $M$ and $C=\bigcup C_i$ is a union of finitely many pairwise disjoint smooth arcs or closed curves that intersect $K$ only in their endpoints (or not at all), and such that their intersections with the boundary $bK$ are transverse. 
\end{definition}

An admissible set $S\subset M$ is Runge in $M$ if and only if the inclusion map $S\hra M$ induces an injective homomorphism $H_1(S;\Z)\hra H_1(M;\Z)$ of the first homology groups. If this holds, then we have the classical Mergelyan approximation theorem: Every continuous function $f\colon S\to \C$ that is holomorphic in the interior $\mathring K$ of $K$ can be approximated, uniformly on $S$, by functions holomorphic on $M$. If in addition $f$ is of class ${\mathscr C}^1$ on $S$, then the approximation can be made in the ${\mathscr C}^1(S)$ topology. 

The notion of an $A$-immersion extends in an obvious way to maps $F\colon S\to \C^n$ of class ${\mathscr C}^1(S)$. On the set $K$ this is the standard notion, while on the curves $C$ we ask that the derivative $F'(t)$ with respect to any local real parameter $t$ on $C$ belongs to $A\setminus \{0\}$.

\begin{theorem}
\label{th:Mergelyan}
{\rm (Mergelyan's theorem for $A$-immersions.)}
Let $A\subset \C^n$ be a closed irreducible conical subvariety which is smooth away from $0$. Assume that $M$ is an open Riemann surface and that $S=K\cup C$ is a compact admissible set in $M$ (see Def.\ \ref{def:admissible}). Then the following hold:
\begin{itemize}
\item[\rm (a)] Every $A$-immersion $S\to \C^n$ can be approximated in the ${\mathscr C}^1(S)$ topology by $A$-immersions $U\to \C^n$ in open neighborhoods of $S$ in $M$. 
\item[\rm (b)] 
If in addition $S$ is Runge in $M$ and $A\setminus \{0\}$ is an Oka manifold (Def.\ \ref{def:Oka}), then every $A$-immersion $S\to \C^n$ can be approximated in the ${\mathscr C}^1(S)$ topology by $A$-immersions $M\to \C^n$.
\end{itemize}
\end{theorem}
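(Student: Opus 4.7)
\smallskip\noindent\textit{Proof strategy.}
I work throughout with the derivative $dF=f\theta$, so the task reduces to approximating the map $f\colon S\to A\setminus\{0\}$ by a holomorphic map $\tilde f$ into $A\setminus\{0\}$ on a neighborhood of $S$ for (a), respectively on all of $M$ for (b), whose $1$-form $\tilde f\theta$ is exact; integration via \eqref{eq:integral} then produces the required $A$-immersion. Since $F$ is already defined on $S$, the periods of $f\theta$ vanish along a basis $\gamma_1,\ldots,\gamma_l$ of $H_1(S;\Z)$, so the issue is to preserve this vanishing after approximation. The key tool for enforcing it will be the period-deformation of Lemma \ref{lem:deformation}.

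\smallskip\noindent\textit{Part (a).}
I first arrange that $F$ is nondegenerate in the sense of Definition \ref{def:nondegenerate} by applying part (a) of Theorem \ref{th:local} on the compact components of $K$; the spanning condition on $K$ alone already forces nondegeneracy on $S$. Using the classical $\mathscr{C}^1$-Mergelyan theorem for admissible sets to approximate $f$ by a holomorphic $\C^n$-valued map, and then composing with a holomorphic retraction from a tubular open neighborhood of $A\setminus\{0\}\subset\C^n\setminus\{0\}$ onto $A\setminus\{0\}$ (which exists since $A\setminus\{0\}$ is a smooth closed complex submanifold), I obtain a holomorphic $f_0\colon U\to A\setminus\{0\}$ on a neighborhood $U$ of $S$ that is $\mathscr{C}^1$-close to $f$ on $S$. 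The periods $\cP(f_0)=\bigl(\int_{\gamma_j} f_0\theta\bigr)_{j=1}^l$ are therefore small. Since $f_0$ is nondegenerate on $U$ by continuity, Lemma \ref{lem:deformation} produces a holomorphic deformation $\Phi_{f_0}(\zeta,\cdot)$ whose period map has maximal rank $ln$ at $\zeta=0$; the holomorphic implicit function theorem then yields $\zeta$ near $0$ for which $\tilde f=\Phi_{f_0}(\zeta,\cdot)$ satisfies $\cP(\tilde f)=0$ and remains $\mathscr{C}^1$-close to $f$ on $S$. Integrating $\tilde f\theta$ yields the desired $A$-immersion on a neighborhood of $S$.

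\smallskip\noindent\textit{Part (b).}
I proceed by a bumping construction. Exhaust $M$ by an increasing sequence of admissible Runge sets $S=S_0\subset S_1\subset\cdots$ with $\bigcup_j S_j=M$, where each $S_{j+1}$ is obtained from $S_j$ by attaching a single $0$-handle (a new disjoint disk) or a single $1$-handle (an arc with endpoints in $bS_j$). Inductively, assume that an $A$-immersion $F_j$ is given on a neighborhood $U_j$ of $S_j$. I extend the derivative $f_j=dF_j/\theta$ continuously over the new handle to a map $S_{j+1}\to A\setminus\{0\}$; this is possible because $A\setminus\{0\}$ is (path-)connected, so extensions over an added arc or an added disk both exist. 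Since $A\setminus\{0\}$ is an Oka manifold and $S_{j+1}$ is Runge in $M$, Theorem \ref{th:OkaP} approximates this continuous extension, uniformly on $S_{j+1}$, by a holomorphic map $M\to A\setminus\{0\}$. Restricting this map to a neighborhood of $S_{j+1}$ and applying part (a) to the resulting starting data on $S_{j+1}$ produces an $A$-immersion $F_{j+1}$ on a neighborhood of $S_{j+1}$ that is $\mathscr{C}^1$-close to $F_j$ on $S_j$. A summable choice of the approximation errors, as in the proof of Theorem \ref{th:desing2}, yields a limit $A$-immersion $\tilde F\colon M\to\C^n$ approximating $F$ on $S$.

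\smallskip\noindent\textit{Main obstacle.}
The subtlest point is the interplay between the Oka-extension step and the period-correction step in the inductive construction for part (b): after attaching a $1$-handle, a new generator of $H_1(S_{j+1};\Z)$ appears, and the holomorphic map produced by the Oka theorem will generically have nonzero period over it. The deformation \eqref{eq:Phi}--\eqref{eq:Phif} must therefore kill this new period without disturbing the approximation of $F_j$ already achieved on $S_j$. This is arranged by choosing the cutoff functions $g_{i,k}$ (obtained through Mergelyan on $S_{j+1}$) with support concentrated near the newly attached handle, which is possible because $S_j$ and the handle are $\Oscr$-separated in the admissible Runge set $S_{j+1}$; nondegeneracy along the induction is automatic once a single component of the initial $K$ has been made nondegenerate.
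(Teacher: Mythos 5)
Your overall strategy for part (a) matches the paper: make $F$ nondegenerate, approximate $f=dF/\theta$ by a holomorphic $A\setminus\{0\}$-valued map near $S$, and then correct the (small) periods with Lemma~\ref{lem:deformation}. One technical caveat: you pass through a tubular holomorphic retraction onto $A\setminus\{0\}$, but such a retraction is not guaranteed to exist, since $A\setminus\{0\}$ is typically not Stein (e.g.\ for the null quadric \eqref{eq:null}, $A\setminus\{0\}$ is a $\C^*$-bundle over $\C\P^1$ and is not Stein, so Docquier--Grauert does not apply globally). The paper instead cites \cite[Theorem 5.1]{DF}, which gives $\mathscr C^r$-approximation by maps in $\Oscr(S,X)$ into an arbitrary complex manifold $X$; that is the right tool here.

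Part (b) has a genuine gap. When you attach a $1$-handle (an arc $C$ with endpoints $q_1,q_2\in bS_j$), you extend $f_j$ continuously over $C$ using only path-connectedness of $A\setminus\{0\}$. This places no control whatsoever on the integral $\int_C f\theta$. After the Oka approximation, the period of $\tilde f\theta$ over the new cycle in $H_1(S_{j+1};\Z)$ can therefore be \emph{large}, not small. Lemma~\ref{lem:deformation} gives a deformation whose period map is a local diffeomorphism near $\zeta=0$; it can only correct a period deviation of bounded, $f$-dependent size. Concentrating the cutoff functions near the handle changes \emph{which} coordinates of the period you can move, not \emph{how far} you can move them, so it does not rescue the argument. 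Consequently, what you feed into ``part (a)'' after the Oka step is not an $A$-immersion on $S_{j+1}$ --- the $1$-form $\tilde f\theta$ is not exact there --- and part (a) is not applicable.

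The missing ingredient is Lemma~\ref{lem:convex} (the convex-integration lemma), which is the paper's key new device. Before any Oka approximation, one must first choose the extension $f|_C$ so that $\int_C f\theta$ is approximately $F_j(q_2)-F_j(q_1)$; this is exactly what Lemma~\ref{lem:convex} produces, exploiting that the convex hull of $A$ is all of $\C^n$ (Lemma~\ref{lem:convexhull}). The small remaining discrepancy is then removed either by translating a connected component of $F_j$ (if $q_1,q_2$ lie in different components of $S_j$) or by a small perturbation supplied by Lemma~\ref{lem:pq}, after which the extended $\mathscr C^1$ map on the admissible set $D\cup C$ is genuinely an $A$-immersion and part (a) and the noncritical Oka step apply. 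Without this step, the period over the new cycle is uncontrolled and the induction breaks down.
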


Theorem \ref{th:Mergelyan} also holds when $M$ is a compact bordered Riemann surface since every such is a smoothly bounded domain in an open Riemann surface.

We shall need the following lemma which is analogous to Gromov's convex integration lemma (see \cite{Gromov:convex} or \cite{EM}).

\begin{lemma}
\label{lem:convex}
Let $A\subset \C^n$ be an irreducible conical subvariety which is not contained in any hypersurface. Given continuous maps $h\colon [0,1]\to A\setminus \{0\}$ and $g\colon [0,1]\to \C \setminus \{0\}$, a vector $v\in \C^n$, and a number $\epsilon>0$, there exists a homotopy $h_s\colon [0,1]\to A\setminus \{0\}$ $(0\le s\le 1)$ such that $h_0=h$, the homotopy is fixed near the endpoints $0$ and $1$, and we have
\begin{equation}
\label{eq:intv}
	\left| \int_0^1 h_1(t)g(t)\, dt - v\right| <\epsilon.
\end{equation}
\end{lemma}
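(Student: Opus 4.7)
My strategy is Gromov-style convex integration, exploiting that the convex hull of $A$ fills $\C^n$. Since $A$ is an algebraic irreducible subvariety not contained in any hyperplane, Lemma \ref{lem:convexhull} gives $\mathrm{conv}(A)=\C^n$; combined with the conicality of $A$ (which makes $A$ invariant under multiplication by any positive real number, and also by $-1$ since $-1\in\C^*$), this implies that every $w\in\C^n$ can be written as a finite sum $w=\sum_{j=1}^N w_j$ with $w_j\in A\setminus\{0\}$, for any $N$ sufficiently large. Indeed, Carathéodory's theorem in $\R^{2n}$ gives $w=\sum_{j=1}^{2n+1}\lambda_j a_j$ with $\lambda_j\ge 0$ and $a_j\in A$; absorb the scalars by conicality, then pad with cancelling pairs $b_k+(-b_k)$ ($b_k\in A\setminus\{0\}$) to reach the desired~$N$.

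I would first reduce the problem to modifying $h$ on a subinterval $[a,b]\Subset(0,1)$. Setting $w:=v-\int_{[0,1]\setminus[a,b]}h(t)g(t)\,dt$, it suffices to construct $\tilde h_1\colon[a,b]\to A\setminus\{0\}$ with $\tilde h_1(a)=h(a)$, $\tilde h_1(b)=h(b)$, which is path-homotopic to $h|_{[a,b]}$ rel endpoints in $A\setminus\{0\}$, and satisfies $|\int_a^b \tilde h_1 g\,dt - w|<\epsilon$. Gluing with $h$ outside $[a,b]$ yields $h_1$ path-homotopic to $h$ rel endpoints, and any such path-homotopy provides the required family $h_s$, fixed on $[0,a]\cup[b,1]$ and hence near the endpoints $0,1$.

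For the main construction, partition $[a,b]$ into $N$ equal subintervals $I_j=[t_{j-1},t_j]$; for $N$ large, the numbers $\alpha_j:=\int_{I_j} g(t)\,dt$ are all nonzero by continuity of $g$. Write $w=\sum_{j=1}^N w_j$ with $w_j\in A\setminus\{0\}$ as above, and set $z_j:=w_j/\alpha_j\in A\setminus\{0\}$, using $\C^*\cdot A=A$. Define $\tilde h_1$ to be the constant $z_j$ on the bulk of $I_j$, with continuous transitions in $A\setminus\{0\}$ between consecutive constants (and between $h(a),z_1$ and $z_N,h(b)$ at the ends), all supported in transition zones of small total length $\eta$. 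Such transition paths exist because $A\setminus\{0\}$ is path-connected (an irreducible complex analytic variety minus a proper subvariety). The piecewise-constant bulk contributes exactly $\sum_j z_j\alpha_j=w$ to the integral, while the transition zones contribute $O(\eta)$ (each transition path is bounded as a continuous image of $[0,1]$), which is less than $\epsilon/2$ for small~$\eta$.

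The main obstacle --- and the subtlest point --- is ensuring that $\tilde h_1$ lies in the path-homotopy class of $h|_{[a,b]}$ rel endpoints, since $\pi_1(A\setminus\{0\})$ may be nontrivial (it is an extension of $\pi_1(A_\infty)$ by $\Z$, since $A\setminus\{0\}\to A_\infty$ is a $\C^*$-bundle). I would handle this by a concluding correction: the obstruction is a class $[\ell]\in\pi_1(A\setminus\{0\},h(a))$; pick a representative loop $\ell$ and a tiny subinterval $[a,a+\delta]\subset[a,b]$, compress the previous construction onto $[a+\delta,b]$, and use $[a,a+\delta]$ to traverse $\ell$. The resulting path is path-homotopic to $\ell\ast\tilde h_1$, hence to $h|_{[a,b]}$ rel endpoints. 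The inserted loop stays in a bounded subset of $A\setminus\{0\}$, so its integral contribution is at most $\delta\|\ell\|_\infty\|g\|_\infty$, and the mild reparametrization of $\tilde h_1$ changes the rest of the integral by $o(1)$ as $\delta\to 0$; choosing $\delta$ small keeps the total error below $\epsilon$, completing the construction.
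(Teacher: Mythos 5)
Your proposal is correct and takes essentially the same approach as the paper: use the fact that the convex hull of $A$ equals $\C^n$ (Lemma~\ref{lem:convexhull}) together with conicality to write the target vector as a sum of points of $A\setminus\{0\}$, build a piecewise-constant path taking those values on subintervals, and compress the transition zones to make the error small. The paper's proof is slightly slicker at one point --- it first replaces $h$ by $hg$ (again by conicality) to reduce to $g\equiv 1$, so no division by the integrals $\alpha_j$ is needed --- and your explicit $\pi_1$-correction step merely spells out what the paper dispatches by asserting that the connecting arcs can be chosen so that $h_1$ lies in the right homotopy class.
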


\begin{proof} 
Replacing the map $h$ by $hg$ (which also has range in $A\setminus\{0\}$) we reduce to the case when $g\equiv 1$. Since $A$ is conical and its convex hull equals $\C^n$ (see Lemma \ref{lem:convexhull}), we can find an integer $N$ and vectors $v_1,\ldots,v_N\in A \setminus\{0\}$ such that $\frac{1}{N} \sum_{j=1}^N v_j = v$. Set $v_0=h(0)$ and $v_{N+1}=h(1)$. Choose a small number $\delta>0$ and let $I_j\subset [0,1]$ be the pairwise disjoint segments
\[
	I_j=\left[\frac{j-1+\delta}{N}, \frac{j-\delta}{N}\right],\qquad j=1,\ldots,N. 
\]
Their complement $J=[0,1]\setminus\bigcup_{j=1}^N I_j$ has total length $2\delta$. Let $h_1\colon [0,1]\to A \setminus\{0\}$ be chosen such that $h_1=h$ near $0$ and $1$, and $h_1(t)=v_j$ for $t\in I_j$ ($j=1,\ldots,N$). On the remaining segments contained in $J$ we choose $h_1$ so that it is continuous and homotopic to $h$, and so that $|h_1|\le R$ for some constant $R>0$ independent of $\delta$. This is achieved by first going from $v_0=h(0)$ to $v_1$ along a path in $A\setminus\{0\}$ in time $[0,\delta/N]$, then staying at $v_1$ for time $t\in I_1$, then going from $v_1$ to $v_2$ along an arc in $A\setminus\{0\}$ in time $(1-\delta)/N \le t\le (1+\delta)/N$, then staying at $v_2$ for time $t\in I_2$, etc. By a suitable choice of the arcs connecting the consecutive points $v_j$, $v_{j+1}$, we can insure that the new path $h_1$ is homotopic to $h$ and that it remains in a fixed ball $\{z\in\C^n\colon |z|<R\}$ independent of $\delta$. We then have 
\[
	\int_0^1 h_1(t)\, dt =  \frac{1-2\delta}{N} \sum_{j=1}^N v_j  + \int_J h_1(t)\,dt 
	= (1-2\delta) v + \int_J h_1(t)\,dt.
\]
Choosing $\delta< \epsilon/4R$ we get 
\[
	\left| \int_0^1 h_1(t)\,dt - v\right| \le 2\delta|v| + \int_J |h_1(t)| \,dt  < 
	4\delta R < \epsilon.
\]
This proves Lemma \ref{lem:convex}.
\end{proof}

\begin{remark}
\label{rem:convex}
We expect that one can always reach the equality in (\ref{eq:intv}), but this will not be needed. The assumption that $A$ is conical was used to reduce to the case $g=1$. Lemma \ref{lem:convex} still holds without this assumption which is seen as follows. We subdivide $[0,1]$ into a large number $N$ of sufficiently small subintervals $I_j$ such that $g$ is very close to a constant $g_j$ on each of them. Then we repeat the above argument on $I_j$ to find $h$ so that $\int_{I_j} h(t)g_j \,dt \approx N^{-1} v$. Summing up, we get $\int_0^1 hg \approx v$.  
\qed \end{remark}

\smallskip
\noindent \it Proof of Theorems \ref{th:Mergelyan} and \ref{th:Oka}. 
\rm We begin by proving part (a) of Theorem \ref{th:Mergelyan}. We may assume that $S$ is connected since the same argument applies separately to each connected component. 

We begin by perturbing the given $A$-immersion $F\colon S\to \C^n$ so as to make it nondegenerate in the sense of Def.\ \ref{def:nondegenerate}; this can be done as in the proof of Theorem \ref{th:local}-(a). By the proof of Theorem \ref{th:local}-(c), 
but using Mergelyan approximation, we can approximate the map $f=dF/\theta\colon S\to A\setminus\{0\}$, uniformly on $S$, by a holomorphic map $\wt f\colon U\to A\setminus\{0\}$ on an open connected neighborhood $U\subset M$ of $S$ such that $\wt f\theta$ has vanishing periods over all nontrivial loops in $S$. We may assume that $S$ is a strong deformation retract of $U$. We then get an $A$-immersion $\wt F\colon U\to \C^n$ by setting $\wt F(x) = F(p) + \int_p^x \wt f\theta$ $(x\in U)$ for any chosen point $p\in S$. By the construction, $\wt F|_S$ approximates $F$ in ${\mathscr C}^1(S)$ since the integral from $p$ to any point $x\in S$ can be calculated over a path in $S$ and $|\wt f-f|$ is small on $S$. 

This proves part (a) of Theorem \ref{th:Mergelyan}.

We now turn to the proof of Theorem \ref{th:Mergelyan}-(b). At the same time we shall prove Theorem \ref{th:Oka} (the Oka principle for $A$-immersions). In fact, since every compact Runge set $K\subset M$ has a basis of compact smoothly bounded Runge neighborhoods, the only addition in Theorem \ref{th:Oka} over Theorem \ref{th:Mergelyan} is that one  can prescribe the homotopy class of $f=dF/\theta \colon M\to A\setminus\{0\}$. 

By the already proved part (a) we may fix an $A$-immersion $F_0\colon U\to \C^n$ from an open set $U\subset M$ containing $S$ such that $F_0$ is ${\mathscr C}^1$ close to $F$ on $S$. Write $dF_0=f_0\theta$ where $f_0\colon U\to A\setminus\{0\}$. After shrinking $U$ around $S$ we may assume that $f_0$ extends to a contiuous map $f_0\colon M\to A\setminus \{0\}$.

Since the set $S$ is Runge in $M$, there exists a smooth strongly subharmonic Morse exhaustion function $\tau\colon M\to \R$ with nondegenerate (Morse) critical points such that $S\subset \{\tau<0\}$ and $\{\tau\le 0\}\subset U$. We may assume that $0$ is a regular value of $\tau$, so $D_0=\{\tau \le 0\}$ is a smoothly bounded compact domain.  
Let $p_1,p_2,\ldots$ be the critical points of $\tau$ in $M\setminus D_0$. We may assume that $0<\tau(p_1)<\tau(p_2)<\cdots$. 
Choose a strictly increasing divergent sequence $a_1<a_2<a_3<\cdots$ such that $\tau(p_j)< a_j< \tau(p_{j+1})$ holds for $j=1,2,\ldots$. If there are only finitely many $p_j$'s, then we choose the remainder of the sequence $a_j$ arbitrarily. 
Set $D_j=\{\tau\le a_j\}$. To conclude the proof we shall inductively construct a sequence of $A$-immersions 
\begin{equation}\label{eq:FjDj}
F_j\colon D_j \to \C^n,\quad j=1,2,\ldots
\end{equation}
such that $||F_j-F_{j-1}||_{D_{j-1}} <\epsilon_j$ for a certain sequence $\epsilon_j>0$ which decreases to zero sufficiently fast, and such that the map $dF_j/\theta=f_j\colon D_j\to A\setminus\{0\}$ is homotopic to $f_0|_{D_j}$ through maps 
$D_j\to A\setminus\{0\}$.

We begin the induction with the given $A$-immersion $F_0$. For the inductive step, assume that we have an  $A$-immersion $F_{j-1}$ for some $j>0$. The construction of $F_j$ from $F_{j-1}$ is made in two steps: the noncritical case when there is no change of topology of the sublevel set, and the critical case when the topology of the sublevel set changes at one point. If there are only finitely many $p_i$'s and there exists no value $\tau(p_i)$ between $a_{j-1}$ and $a_j$, then only the noncritical step is required; otherwise both steps are needed.

\smallskip
\noindent \it The noncritical case \rm is accomplished by the following lemma; it is at this point that the Oka property of $A\setminus \{0\}$ is invoked.

\begin{lemma}
\label{lem:noncritical}
Let $M$ be an open Riemann surface and $D\subset D'$ be compact domains with smooth boundaries in $M$. Assume that there is a smooth function $\sigma$ on a neighborhood $U \supset D'\setminus \mathring D$, with $d\sigma\ne 0$ on $U$, such that $D\cap U=\{\sigma\le a\}$ and $D'\cap U=\{\sigma\le b\}$ for some real numbers $a<b$. If $A\subset \C^n$ is as in Theorem \ref{th:Oka} (so $A\setminus \{0\}$ is an Oka manifold), then every $A$-immersion $F\colon D\to \C^n$ can be approximated, uniformly on $D$, by $A$-immersions $\wt F\colon D'\to \C^n$.
\end{lemma}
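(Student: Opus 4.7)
The plan is to reduce the problem to approximating the derivative map $f = dF/\theta \colon D \to A \setminus \{0\}$ across the collar $D' \setminus \mathring D$ using the Oka property, and then to correct the periods of the resulting 1-form to zero using Lemma \ref{lem:deformation}. Since $d\sigma \ne 0$ on $U$, the sublevel set $D$ is a deformation retract of $D'$; in particular $D$ is Runge in $D'$ and the inclusion induces an isomorphism $H_1(D;\Z) \xrightarrow{\cong} H_1(D';\Z)$. Choose smoothly embedded loops $\gamma_1,\ldots,\gamma_l$ contained in $\mathring D$ representing a basis of this common first homology. Using the retraction $D' \to D$ provided by the gradient flow of $\sigma$, extend $f$ to a continuous map $f \colon D' \to A\setminus\{0\}$.

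Since $A\setminus\{0\}$ is an Oka manifold and $D$ is Runge in $D'$, Theorem \ref{th:OkaP} yields a holomorphic map $g \colon D' \to A\setminus\{0\}$, homotopic to $f$, which approximates $f$ uniformly on $D$. By Theorem \ref{th:local}(a) applied to $D'$, we may further perturb $g$ so that it becomes nondegenerate in the sense of Definition \ref{def:nondegenerate}, while preserving the uniform approximation on $D$. Because $f\theta = dF$ is exact on $D$ and $g$ is uniformly close to $f$ there, the periods
\[
\cP_j(g) = \int_{\gamma_j} g\, \theta, \qquad j = 1,\ldots,l,
\]
are as small as we wish; write $\eta = \max_j |\cP_j(g)|$. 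Apply Lemma \ref{lem:deformation} to $g$ on the compact bordered Riemann surface $D'$ to obtain a holomorphic family $(\zeta,x) \mapsto \Phi_g(\zeta,x) \in A \setminus \{0\}$, defined for $\zeta$ near $0 \in \C^N$, with $\Phi_g(0,\cdotp) = g$ and such that the period map $\zeta \mapsto \cP(\Phi_g(\zeta,\cdotp)) \in (\C^n)^l$ is submersive at $\zeta=0$. By the implicit function theorem, provided $\eta$ is sufficiently small there exists $\zeta \in \C^N$ with $|\zeta| = O(\eta)$ such that the map $\tilde f := \Phi_g(\zeta,\cdotp) \colon D' \to A\setminus\{0\}$ satisfies $\cP(\tilde f) = 0$. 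Since $\Phi_g$ is constructed from compositions of flows of holomorphic vector fields, $\|\tilde f - g\|_{D'} = O(\eta)$, so $\tilde f$ is still uniformly close to $f$ on $D$.

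Finally, fix a basepoint $p \in D$ and define
\[
\tilde F(x) = F(p) + \int_p^x \tilde f\, \theta, \qquad x \in D'.
\]
The integral is path-independent because $\tilde f \theta$ has vanishing periods, so $\tilde F \colon D' \to \C^n$ is an $A$-immersion. Because any integration path from $p$ to a point of $D$ can be chosen to lie in $D$ and $\tilde f$ is uniformly close to $f$ on $D$, we conclude $\|\tilde F - F\|_D$ is as small as desired. The delicate point, and the reason the argument works, is that the Oka-principle approximation introduces small periods but no a priori control of their size relative to the approximation error; this is exactly compensated by the submersivity of the period map at the base point given by Lemma \ref{lem:deformation}, which allows the implicit-function-theorem correction of size $O(\eta)$ and thus preserves the original approximation on $D$.
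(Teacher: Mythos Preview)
Your argument assembles the right pieces—Oka approximation to push $f$ across the collar, then Lemma \ref{lem:deformation} to kill the small resulting periods—but there is a circularity in the order of operations. You first choose the Oka approximant $g$ (which fixes $\eta=|\cP(g)|$), and only \emph{afterwards} construct the deformation family $\Phi_g$ by applying Lemma \ref{lem:deformation} to $g$ on $D'$. The implicit function theorem then gives a threshold $r_g>0$ such that the period map $\zeta\mapsto\cP(\Phi_g(\zeta,\cdot))$ covers $0$ whenever $\eta<r_g$; but $r_g$ depends on $\Phi_g$, hence on $g$, hence on the Oka approximation you already chose. Nothing guarantees that improving the approximation (making $\eta$ smaller) does not simultaneously shrink $r_g$, since a new $g$ may differ wildly on $D'\setminus D$ and the fresh construction of $\Phi_g$ carries no uniformity. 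As written, the phrase ``provided $\eta$ is sufficiently small'' is therefore not justified.

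The paper avoids this by reversing the order: it first builds the deformation family $\Phi=\Phi_f$ from the \emph{fixed} map $f$ on $D$ (where $\cP(\Phi_f(0,\cdot))=\cP(f)=0$ already), and then applies the Oka principle not to $f$ alone but to the whole family $\Phi\colon B\times D\to A\setminus\{0\}$, using that $B\times D$ is Runge in $W\times V'$. The extended family $\Psi$ has period map uniformly close to that of $\Phi$ on the fixed ball $B_0$, so its range still contains $0$ by openness—no circularity. Your route can be repaired by the same idea in a different guise: build the map $\Phi(\zeta,x,z)$ of (\ref{eq:Phi}) once and for all from $f$ (choosing the functions $g_{i,k}$ holomorphic on $D'$), and then set $\Phi_g(\zeta,x)=\Phi(\zeta,x,g(x))$ for \emph{any} $g$; since the periods are computed over loops in $D$ where $g\to f$, the IFT threshold is now controlled by $f$ alone.
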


\begin{proof}
It suffices to consider the case when $D$ (and hence $D'$) is connected. By Theorem \ref{th:local} we may assume that $F\colon D\to \C^n$ is a nondegenerate $A$-immersion defined on an open neighborhood $V\subset M$ of $D$. Write $f=dF/\theta\colon V\to A\setminus\{0\}$. By Lemma \ref{lem:deformation} there exist an open ball $W$ around the origin in some $\C^N$ and a holomorphic map 
\[
	W \times V \ni (\zeta,x) \longmapsto \Phi(\zeta,x)\in A\setminus\{0\} 
\]
such that $\Phi(0,\cdotp)=f$ and the period map $\zeta\mapsto \cP(\Phi_f(\zeta,\cdotp)) \in (\C^n)^l$ (\ref{eq:P}) has maximal rank at $\zeta=0$. (The period map is now calculated on closed curves $C_1,\ldots,C_l$ in $D$ which form a basis of $H_1(D;\Z)$.)

Choose an open set $V'\subset M$ containing $D'$ such that $D'$ is a strong deformation retract of $V'$. Pick closed balls $B_0 \subset B \subset W$ around $0\in \C^N$. The compact set $B\times D$ is then Runge in the Stein manifold $W \times V'$ and is a strong deformation retract of $W \times V'$. Since $A\setminus \{0\}$ is an Oka manifold, we can apply Theorem \ref{th:OkaP} to approximate $\Phi$, uniformly on $B\times D$, by a holomorphic map $\Psi\colon W\times V'\to A\setminus\{0\}$. If the approximation is sufficiently close then the range of the period map $B_0 \ni \zeta\mapsto \cP(\Psi(\zeta,\cdotp)) \in (\C^n)^l$ still contains the origin, so there is a point $\zeta_0\in B_0$ such that the map $V' \ni x \mapsto \Psi(\zeta_0,x) \in A\setminus \{0\}$ has vanishing periods. Since $B_0$ can be chosen arbitrarily small, the integral of  $\Psi(\zeta_0,\cdotp)\,\theta$ is an $A$-immersion $\wt F \colon D'\to\C^n$ which approximates $F$ uniformly on $D$ as close as desired.
\end{proof}

\smallskip
\noindent \it The critical case. \rm  We now have compact domains $D\subset D'$ with smooth boundaries in $M$ and a smooth strongly subharmonic function $\tau$ on a neighborhood of $D'$, with $D=\{\tau\le a\}$ and $D'=\{\tau\le b\}$, such that $\tau$ has a unique (Morse) critical point $p$ on $D'\setminus \mathring D$. Let $F\colon D\to \C^n$ be an $A$-immersion and write $f=dF/\theta$.

Since $\tau$ is strongly subharmonic, the Morse index of $p$ is either 0 or 1. 

If the Morse index of $p$ equals 0, a new connected component of the sublevel set $\{\tau\le t\}$ appears at $p$ when $t$ passes the value $\tau(p)$. In this case we can extend $f$ by choosing an arbitrary holomorphic map from this new component to $A\setminus\{0\}$. This also provides an extension of $F$. 

If the Morse index of $p$ equals 1, then the change of topology of the sublevel set $\{\tau\le t\}$ at $p$ is described by attaching to $D=\{\tau\le a\}$ a smooth arc $C$ (the stable manifold of $p$ for the gradient flow of $\tau$). Let $q_1,q_2$ denote the endpoints of $C$. By applying Lemma \ref{lem:convex} and perturbing $F$ slightly on $D$ we can extend the map $f=dF/\theta\colon D\to A\setminus \{0\}$ smoothly to the arc $C$ so that the extended maps still has range in $A\setminus \{0\}$ and satisfies 
\begin{equation}
\label{eq:intC}
		\int_{C} f\theta = F(q_2)-F(q_1). 
\end{equation}
Indeed, Lemma \ref{lem:convex} provides an extension of $f$ to $C$ for which (\ref{eq:intC}) holds approximately. If the endpoints $q_1$ and $q_2$ of $C$ belong to different connected components of $D$, we adjust the value of $F$ on one of these two components by adding a suitable constant vector so as to make (\ref{eq:intC}) hold. If on the other hand the endpoints belong to the same component of $D$, then we perturb the difference $F(q_2)-F(q_1)$ by using Lemma \ref{lem:pq} to make (\ref{eq:intC}) hold. 

When proving Theorem \ref{th:Oka} (the Oka principle), we must also insure that the extended map $f\colon D\cup C \to A\setminus\{0\}$ constructed above is homotopic to the given continuous map $M\to A\setminus\{0\}$. This is easily achieved by a suitable choice of the connecting paths used in the proof of Lemma \ref{lem:convex}.

It follows from (\ref{eq:intC}) that the extended map $f$ integrates to an $A$-immer\-sion $F_0\colon D\cup C\to\C^n$ which agrees with $F$ on $D$. (If $D\cup C$ is disconnected, we integrate separately in each connected component.) By part (a) we can approximate $F_0$ by an $A$-immersion $F_1\colon U\to \C^n$ in a neighborhood $U\subset M$ of $D\cup C$. Now there is a smoothly bounded compact neighborhood $B\subset U$ of $D\cup C$ such that $D'$ is a noncritical extension of $B$ as in Lemma \ref{lem:noncritical}. Hence that lemma furnishes an $A$-immersion $\wt F \colon D'\to \C^n$ approximating $F_1$ on $B$. 

This completes the critical step, closes the induction, and concludes the construction of the sequence \eqref{eq:FjDj} with the desired properties. Indeed, just set $D=D_{j-1}$, $D'=D_j$, and $F=F_{j-1}$, and define $F_j:=\wt F$ given either by the critical or the noncritical step (depending on the topology of $D_j\setminus \mathring D_{j-1}$), such that $||F_j-F_{j-1}||_{D_{j-1}}<\epsilon_j$.

Finally, if the $\epsilon_j$'s are chosen small enough, then the limit $A$-immersion $\lim_{j\to\infty} F_j\colon M=\bigcup_j D_j\to\C^n$ is as close to $F$ in the ${\mathscr C}^1(S)$ topology as desired; recall that $S\subset D_0$ and that $F_0$ is ${\mathscr C}^1$ close to $F$ on $S$. This proves Theorem \ref{th:Mergelyan}-(b) and Theorem \ref{th:Oka}.
\qed
\smallskip

The following immediate corollary to Theorem \ref{th:Mergelyan}-(a) is obtained by using the correspondence $\cT$ (\ref{eq:T}).

\begin{corollary}\label{co:sl2c-mergelyan}
Let $A$, $M$, and $S$ be as in Theorem \ref{th:Mergelyan}-(a). Then every null curve $S\to SL_2(\C)\setminus\{z_{11}=0\}$ (in the sense described just above Theorem \ref{th:Mergelyan}) can be approximated in the ${\mathscr C}^1(S)$ topology by null curves $U\to SL_2(\C)$ in open neighborhoods $U$ of $S$ in $M$.
\end{corollary}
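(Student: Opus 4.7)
The plan is to reduce the statement to Theorem \ref{th:Mergelyan}-(a) via the biholomorphism $\cT$ of \eqref{eq:T}. Given a null curve $F\colon S\to SL_2(\C)\setminus\{z_{11}=0\}$, set $G=\cT^{-1}\circ F\colon S\to \C^3\setminus\{z_3=0\}$. Since $\cT$ carries null curves to null curves and is a biholomorphism, $G$ is an $A$-immersion (of class ${\mathscr C}^1(S)$, holomorphic on $\mathring K$) into $\C^3$ with $A$ the null quadric \eqref{eq:null}; note that $A$ and $S$ meet the hypotheses of Theorem \ref{th:Mergelyan}-(a).

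Next I apply Theorem \ref{th:Mergelyan}-(a) to obtain a sequence of $A$-immersions $\wt G_k\colon U_k\to \C^3$ on open neighborhoods $U_k\supset S$ in $M$ converging to $G$ in ${\mathscr C}^1(S)$. The key point is to verify that, after possibly shrinking $U_k$, we have $\wt G_k(U_k)\subset \C^3\setminus\{z_3=0\}$, so that $\cT$ can be composed with $\wt G_k$. Since $S$ is compact and $G(S)\subset\C^3\setminus\{z_3=0\}$, the third component $G_3$ of $G$ satisfies $|G_3|\geq 2\delta>0$ on $S$ for some $\delta>0$. By ${\mathscr C}^1$-convergence on $S$, for $k$ large the third component of $\wt G_k$ satisfies $|(\wt G_k)_3|>\delta$ on $S$, and by continuity of $(\wt G_k)_3$ there is an open neighborhood $V_k\subset U_k$ of $S$ on which $|(\wt G_k)_3|>\delta/2$; replace $U_k$ by $V_k$.

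Finally, set $\wt F_k:=\cT\circ \wt G_k\colon U_k\to SL_2(\C)$. Each $\wt F_k$ is a null curve on the open neighborhood $U_k$ of $S$ in $M$, since $\cT$ preserves the class of null curves. Because $\cT$ is smooth on the compact set $G(S)$ (with neighborhoods in $\C^3\setminus\{z_3=0\}$), the ${\mathscr C}^1(S)$-convergence $\wt G_k\to G$ is transported by the chain rule to the ${\mathscr C}^1(S)$-convergence $\wt F_k=\cT\circ \wt G_k \to \cT\circ G=F$, which yields the desired approximation.

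The only subtlety is the uniform bound $|(\wt G_k)_3|\geq \delta/2$ needed to apply $\cT$ on all of $U_k$; this is routine from ${\mathscr C}^1$-closeness on $S$ together with shrinking of the neighborhood, so there is no substantial obstacle beyond invoking Theorem \ref{th:Mergelyan}-(a).
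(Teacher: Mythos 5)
Your proof is correct and takes the same route the paper intends: transport via the biholomorphism $\cT$ of \eqref{eq:T} to the null quadric in $\C^3$, apply Theorem \ref{th:Mergelyan}-(a) there, and transport back. The paper treats this as "immediate"; your added care in shrinking $U_k$ so that $(\wt G_k)_3$ stays away from $0$ and noting that $\cT$ preserves $\mathscr C^1$-closeness on the compact image are exactly the routine verifications left implicit.
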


By a minor modification of the proof of Theorem \ref{th:Mergelyan} we now obtain Merge\-lyan's theorem for $A$-immersions with a fixed component function. This result will be used in an essential way in the construction of proper $A$-embeddings, given in the following section.

\begin{theorem}
\label{th:Mergelyan2}
{\rm (Mergelyan's theorem for $A$-immersions, second version.)}
Let $A\subset \C^n$ be as in Theorem \ref{th:Mergelyan}. Assume that $A\cap\{z_1=1\}$ is an Oka manifold  (Def.\ \ref{def:Oka}), and that the coordinate projection $\pi_1\colon A\to\C$ onto the $z_1$-axis admits a local holomorphic section $h$ near $z_1=0$ with $h(0)\ne 0$. Let $S$ be a compact admissible Runge set in an open Riemann surface $M$. Given an $A$-immersion $F=(F_1,F_2\ldots,F_n)\colon S\to\C^n$ such that $F_1$ extends to a nonconstant holomorphic function $F_1\colon M\to\C$, there exists for every $\epsilon>0$ a holomorphic $A$-immersion $\wt F= (F_1,\wt F_2\ldots,\wt F_n)\colon M\to\C^n$ such that 
$||\wt F_j-F_j||_{\mathscr C^1(S)} <\epsilon$ for $j=2,\ldots,n$.
% $\sup_S |\wt F_j-F_j| <\epsilon$ for $j=2,\ldots,n$.
\end{theorem}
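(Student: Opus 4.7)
The proof adapts the scheme of Theorems \ref{th:Mergelyan}(b) and \ref{th:Oka} to the constrained setting where the first component of $f=dF/\theta$ is required to equal the prescribed holomorphic function $f_1=dF_1/\theta\colon M\to\C$. A key simplification is that, because $F_1$ is globally defined, the $1$-form $f_1\theta$ is exact on $M$, so its periods over every loop vanish automatically; only the $l(n-1)$ periods of $f_2\theta,\ldots,f_n\theta$ need to be corrected. The plan is to run the same exhaustion of $M$ by strongly subharmonic Morse sublevel sets $D_0\subset D_1\subset\cdots$ with $S\subset \mathring D_0$, performing noncritical and critical extensions at each stage by deformations that preserve the first coordinate together with Oka approximations on suitable fiber spaces.

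The structural input is the scaling biholomorphism
\[
\C^*\times (A\cap\{z_1=1\})\;\bihol\; A\cap\{z_1\ne 0\},\qquad (t,w)\mapsto tw.
\]
Since $F_1$ is nonconstant, $f_1^{-1}(0)\subset M$ is a discrete set. Away from it, any holomorphic lift $\wt f\colon U\to A\setminus\{0\}$ of $f_1$ corresponds bijectively to a holomorphic map $g=\wt f/f_1\colon U\to A\cap\{z_1=1\}$, and the Oka property of $A\cap\{z_1=1\}$ (Theorem \ref{th:OkaP}) applies to such $g$. The hypothesis on the local section $h$ near $z_1=0$ with $h(0)\ne 0$ is used near the zeros of $f_1$: there, $h\circ f_1$ gives a nonvanishing local holomorphic lift of $f_1$ to $A$, and any other local lift differs from it by a map into the fiber direction, which one handles either directly (the zero locus is discrete) or through a fibered Oka principle applied to the pullback $Y=\{(x,z)\in M\times A:z_1=f_1(x),\,z\ne 0\}\to M$.

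For the period-correction machinery (analog of Lemma \ref{lem:deformation}) one chooses holomorphic vector fields $V_1,\ldots,V_m$ on $\C^n$ that are tangent to $A$ and annihilate $z_1$, i.e.\ satisfy $dz_1(V_k)\equiv 0$. Their flows preserve $z_1$, so composing them with the current map $f$ produces deformations with $f_1$ fixed. After a preliminary adjustment in the spirit of Theorem \ref{th:local}(a) to achieve \emph{fiberwise nondegeneracy}---that the spans of $T_{f(x)}(A\cap\{z_1=f_1(x)\})$ for varying $x$ fill the hyperplane $\{z_1=0\}\subset\C^n$---the analog of Lemma \ref{lem:maxrank} produces a parametric family whose period map into the $(n-1)$-dimensional fiber-period target per loop is submersive at the origin. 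For the Morse index $1$ critical step, a fiberwise analog of Lemma \ref{lem:convex} applies: since $A$ is not contained in any hyperplane, Lemma \ref{lem:convexhull} forces a generic fiber $A\cap\{z_1=c\}$ to have convex hull equal to the affine hyperplane $\{z_1=c\}\cong\C^{n-1}$, so one can extend $f$ across the new arc $C$ with $f_1|_C$ prescribed while matching the required jumps $F_j(q_2)-F_j(q_1)$ for $j=2,\ldots,n$ (the $j=1$ jump being automatically correct since $F_1$ is globally given).

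The main obstacle is to make the noncritical extension step uniform across the discrete zero set of $f_1$, where the fiber structure of $\pi_1|_A$ degenerates and the generic identification with the Oka manifold $A\cap\{z_1=1\}$ breaks down. The section hypothesis on $h$ is precisely what is needed: it guarantees that $\pi_1\colon A\to \C$ is submersive at $h(0)\in A\cap\{z_1=0\}$, providing a local trivialization in which the Oka-type extension can be carried out. With these ingredients in place, the inductive construction over $\{D_j\}$ proceeds as in the proof of Theorem \ref{th:Oka}, yielding a holomorphic $A$-immersion $\wt F\colon M\to\C^n$ with $\wt F_1=F_1$ and $\wt F_j$ approximating $F_j$ on $S$ in the $\mathscr C^1$-norm for $j=2,\ldots,n$.
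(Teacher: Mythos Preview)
Your proposal is correct and follows essentially the same approach as the paper's proof. Both reduce to the fibered picture $A\setminus\{z_1=0\}\cong A'\times\C^*$ with $A'=A\cap\{z_1=1\}$, correct only the periods of $f'=(f_2,\ldots,f_n)$ using flows of holomorphic vector fields on $A$ tangent to the fibers of $\pi_1$, treat the zeros of $f_1$ via the local section $h$ (the paper simply sets $f'=h\circ f_1$ on a small disc around each new zero $a_j$ and adds that disc to the domain), and then run the noncritical/critical induction from Theorem~\ref{th:Mergelyan}; the paper phrases the noncritical step as an instance of the Oka principle for sections of ramified maps with Oka fibers, which is precisely your pullback $Y\to M$.
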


% We emphasize that the condition $\wt F_1=F_1$ in the above theorem is not a misprint: the first component of $F$ is kept fixed. This is the key addition when compared to Theorem \ref{th:Mergelyan}.

The condition $\wt F_1=F_1$ in the above theorem is not a misprint: the first component of $F$ is kept fixed. This is the key addition over Theorem \ref{th:Mergelyan}.

\smallskip
\noindent
{\it Proof of Theorem \ref{th:Mergelyan2}.} 
Set $A':=A\cap \{z_1=1\}$. By using dilations we see that $A\setminus \{z_1=0\}$ is biholomorphic to $A' \times \C^*$ (and hence is Oka), and the projection $\pi_1\colon A\to\C$ is a trivial fiber bundle with Oka fiber $A'$ except over the origin $0\in \C$.  

Write $dF=f\theta$, where $f=(f_1,\ldots,f_n)=(f_1,f') \colon S\to A\setminus\{0\}$. We use the notation $f'=(f_2,\ldots,f_n)$. As in the proof of Theorem \ref{th:Mergelyan}-(a) we can approximate $F$ by an $A$-immersion in a neighborhood $U\subset M$ of $S$, without changing the first coordinate; the only difference is that we apply the correction of periods technique (see Sec.\ \ref{sec:local}) only to the component $f'$. To this end we use holomorphic vector fields on $A$ that are tangential to the fibers of the projection $\pi_1\colon A\to \C$.

Since the function $f_1=dF_1/\theta$ is holomorphic and nonconstant on $M$, its zero set $f_1^{-1}(0) = \{a_1,a_2,\ldots\}$ is discrete in $M$. The pullback $f_1^* \pi_1 \colon E=f^*A\to M$ of the projection $\pi_1\colon A\to \C$ to $M$ is a trivial holomorphic fiber bundle with fiber $A'$ over $M\setminus \{a_j\}$, but it may be singular over the points $a_j$. The map $f'\colon U\to \C^{n-1}$ satisfies $f'(x)\in \pi^{-1}_{1}(f_1(x))$ for $x\in U$, so $f'$ corresponds to a section of $E\to M$ over the set $U$. 

The problem now is to approximate $f'$, uniformly on a compact Runge neighborhood of $S$, by a section of $E\to M$ whose periods over all loops in $M$ are zero. Except for the period condition, a solution is provided by the Oka principle for sections of ramified holomorphic maps with Oka fibers (see \cite{FF:multivalued} or \cite[Sec.\ 6.13]{F:book}). The proof in our situation, when we must pay attention to the periods, is quite similar. We begin by choosing a local holomorphic solution in a small neighborhood of any point $a_j\in M\setminus S$ so that $f'(a_j)\ne 0$, and we add these neighborhoods to the domain of $f'$. 
We then follow the proof of Theorem \ref{th:Mergelyan} to enlarge the domain of holomorphicity of $f'$.

The noncritical case (see Lemma \ref{lem:noncritical}) amounts to approximating a holomorphic solution $f'$ on a compact smoothly bounded domain $D\subset M$ by a holomorphic solution on a larger such domain $D'\subset M$, assuming that there is no change of topology and that $D'\setminus \mathring D$ does not contain any of the points $a_j$. This is done by applying the Oka principle for maps to the Oka fiber $A'$ of $\pi_1\colon A\to\C$ over the set $\C^*$ where the bundle is trivial. 

In the critical case we attach a smooth arc $C$ to a domain $D\subset M$ such that $C$ does not contain any of the points $a_j$, and we extend $f'$ smoothly over $C$ so that the integral $\int_C f'\theta$ has the correct value (see the condition (\ref{eq:intC})). This is accomplished by a suitable analogue  of Lemma \ref{lem:convex} (cf.\ Remark \ref{rem:convex}). The extended map $f'$ integrates to a holomorphic map $F'\colon D\cup C\to\C^{n-1}$ such that $(F_1,F') \colon D\cup S\to \C^n$ is an $A$-immersion. The proof is finished as before by applying the noncritical case for another pair of domains. 
\qed

\begin{example}
Let $A\subset \C^3$ be the quadric variety (\ref{eq:null}) controlling null curves. Then $A\cap \{z_1=1\} = \{z_2^2 + z_3^2=-1\}$ is an embedded copy of the Oka manifold $\C^*=\C\setminus\{0\}$. (Besides $\C$, this is the only manifold for which Oka himself established the Oka principle in his pioneering paper \cite{Oka} from 1939.) In fact, any hyperplane section of $A$ which does not contain the origin is biholomorphic to $\C^*$. In this particular case, Theorem \ref{th:Mergelyan2} was proved by Alarc\'on and L\'opez \cite{AL1} by using the Weierstrass representation of null curves and the {\em L\'opez-Ros transformation}, a tool that was originally invented to prove a classification result for minimal surfaces in $\R^3$ \cite{LR}.  Neither of these tools is available in the general setting of the present paper.
\qed\end{example}

%%%%%%%%%%%%%%%%%%%%%%%%%%%%%%%%%%%%%%%%%%%%%%%%%%%%%
%																				            %
%  SEC 8: PROPER DIRECTED EMBEDDINGS                %
%																				            %
%%%%%%%%%%%%%%%%%%%%%%%%%%%%%%%%%%%%%%%%%%%%%%%%%%%%%
%

\section{Proper $A$-embeddings}
\label{sec:proper}

The aim of this final section is to prove the following existence result for proper directed embeddings of open Riemann surfaces in $\C^n$.

\begin{theorem}
\label{th:proper}
Let $A\subset \C^n$ $(n\ge 3)$ be a conical subvariety as in Theorem \ref{th:local}. Assume in addition that 
$A\setminus \{0\}$ is an Oka manifold (Def.\ \ref{def:Oka}), and that for $k\in\{1,2\}$ the hyperplane section $A\cap \{z_k=1\}$ is an Oka manifold and the coordinate projection $\pi_k\colon A\to\C$ onto the $z_k$-axis admits a local holomorphic section $h_k$ near $z_k=0$, with $h_k(0)\ne 0$.

Let $M$ be an open Riemann surface, and let $K\subset M$ be a compact Runge set. Then every $A$-immersion from an open neighborhood of $K$ in $M$ into $\C^n$ can be approximated in the ${\mathscr C}^1(K)$ topology by proper $A$-embeddings $M\to\C^n$. Furthermore, these $A$-embeddings can be chosen such that their first two coordinates determine a proper map of $M$ into $\C^2$.
\end{theorem}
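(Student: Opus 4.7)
The plan is to build the desired proper $A$-embedding as the limit of a sequence of $A$-immersions $F^{(j)}\colon M\to\C^n$, produced by an inductive Mergelyan-type procedure on successive annular shells that pushes $|F_1|^2+|F_2|^2$ to infinity along the ideal boundary of $M$.

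For the set-up, fix an exhaustion $K\subset\mathring K_0\subset K_0\subset K_1\subset\cdots$ of $M$ by compact, smoothly bounded, Runge domains, where each inclusion $K_{j-1}\hookrightarrow K_j$ is either a noncritical extension or a single handle attachment. By Corollary \ref{cor:Runge}, replace the given local $A$-immersion on a neighborhood of $K$ by an $A$-immersion $F^{(0)}\in\IA$ that is $\mathscr C^1(K)$-close to the initial map; if needed, apply Theorem \ref{th:local}(a) and Theorem \ref{th:desing} so that $F^{(0)}$ is nondegenerate and an embedding on $K_0$.

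For the inductive step, assume $F^{(j-1)}\colon M\to\C^n$ is an $A$-embedding on $K_{j-1}$ with $|F^{(j-1)}_1|^2+|F^{(j-1)}_2|^2>r_{j-1}$ on $K_{j-1}\setminus\mathring K_{j-2}$. Choose inside the new shell $K_j\setminus\mathring K_{j-1}$ a \emph{labyrinth-type skeleton} $\Gamma_j$: a finite union of pairwise disjoint smooth arcs with endpoints on $bK_{j-1}\cup bK_j$, arranged so that $S_j:=K_{j-1}\cup\Gamma_j$ is admissible and Runge in $M$ and every point of the shell lies within a preassigned small distance of $\Gamma_j$. Alternate the fixed coordinate with the parity of $j$: set $k=1$ when $j$ is odd and $k=2$ when $j$ is even. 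Extend $F^{(j-1)}$ across $\Gamma_j$ to a continuous $A$-immersion $G_j\colon S_j\to\C^n$ whose $k$-th coordinate coincides with $F^{(j-1)}_k|_{S_j}$ and whose $(3-k)$-th coordinate satisfies $|G_{j,3-k}|>\sqrt{r_j}$ on $\Gamma_j$. Arc by arc, $G_j$ is produced by the convex-integration Lemma \ref{lem:convex}, applied to the slice $A\cap\{z_k=F^{(j-1)}_k(t)\}$ in order to realise prescribed large increments of the $(3-k)$-th coordinate as integrals of a continuous map $\Gamma_j\to A\setminus\{0\}$ whose $k$-th component is the fixed function $f^{(j-1)}_k=dF^{(j-1)}_k/\theta$; the local-section hypothesis on $\pi_k$ ensures that this slice is rich enough for Lemma \ref{lem:convex} to apply. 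Theorem \ref{th:Mergelyan2} with $z_k$ fixed then yields a global $A$-immersion $F^{(j)}\colon M\to\C^n$ with $F^{(j)}_k\equiv F^{(j-1)}_k$ on $M$ and $\mathscr C^1(S_j)$-close to $G_j$. By density of $\Gamma_j$ in the shell, $|F^{(j)}_1|^2+|F^{(j)}_2|^2>r_j$ throughout $K_j\setminus\mathring K_{j-1}$. A final $\mathscr C^1(K_j)$-small perturbation via Theorem \ref{th:desing}, combined with Corollary \ref{cor:Runge} to re-extend to $M$, restores injectivity on $K_j$ and closes the induction.

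With $\epsilon_j\searrow 0$ chosen so that $\sum_{k>j}\epsilon_k<\epsilon_j/2$, the uniform limit $\wt F=\lim_j F^{(j)}$ exists in $\mathscr C^1_{\mathrm{loc}}(M,\C^n)$, is an $A$-immersion approximating the initial one in $\mathscr C^1(K)$, is injective on every $K_j$ and hence on all of $M$, and satisfies $|\wt F_1(x)|^2+|\wt F_2(x)|^2\to\infty$ as $x$ exits every compact subset of $M$. This makes $(\wt F_1,\wt F_2)\colon M\to\C^2$ proper, whence $\wt F$ itself is proper. The main obstacle is the simultaneous management at every step of (a) period-matching so that $dF^{(j)}=f^{(j)}\theta$ is exact on $M$, (b) pointwise growth of $|F^{(j)}_{3-k}|$ on $\Gamma_j$, and (c) preservation of injectivity on $K_j$; these are reconciled by Theorem \ref{th:Mergelyan2} (alternately fixing $z_1$ and $z_2$, whence the symmetric hypothesis imposed for $k\in\{1,2\}$), by the convex-integration Lemma \ref{lem:convex}, and by the transversality-based desingularization Theorem \ref{th:desing}, combined in a single diagonal procedure.
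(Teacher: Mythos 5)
Your overall strategy is sound -- exhaust $M$ by smoothly bounded Runge domains, force $\max\{|F_1|,|F_2|\}$ to grow on each annular shell by fixing one coordinate and using Theorem~\ref{th:Mergelyan2} to push the other, and interleave applications of Theorem~\ref{th:desing} and Corollary~\ref{cor:Runge} to keep injectivity and a convergent sequence. This is essentially the paper's blueprint. However, the central step --- deducing a pointwise lower bound on the \emph{entire} shell --- is not justified, and this is where your proposal genuinely breaks down.

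The gap is in the passage ``By density of $\Gamma_j$ in the shell, $|F_1^{(j)}|^2+|F_2^{(j)}|^2>r_j$ throughout $K_j\setminus\mathring K_{j-1}$.'' Theorem~\ref{th:Mergelyan2} gives you $\mathscr C^1$-closeness of $F^{(j)}$ to $G_j$ only on the admissible set $S_j=K_{j-1}\cup\Gamma_j$; you have no quantitative control whatsoever on $F^{(j)}$ in the interior of the labyrinth boxes. Knowing that $|F_2^{(j)}|>\sqrt{r_j}$ on a $\delta$-dense union of arcs $\Gamma_j$ does not give a bound off $\Gamma_j$: the modulus of continuity of the Mergelyan approximant $F^{(j)}$ is not controlled by $\delta$, and $|F_2^{(j)}|$ (a modulus of a holomorphic function) can well have interior minima or zeros. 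One cannot pick $\delta$ after seeing $F^{(j)}$, since $F^{(j)}$ depends on $\Gamma_j$. In short, density of a skeleton is the wrong mechanism here; what is needed is a \emph{covering} of the shell by closed sets on each of which the desired lower bound is established.

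A second, related, issue is the parity alternation: pushing only $|F_2|$ on odd shells and only $|F_1|$ on even shells does not handle the fact that, within a single shell, some regions may have $|F_1|$ small and $|F_2|$ large while other regions have the opposite. The paper's Lemma~\ref{lem:proper} addresses both problems at once. It uses the inductive hypothesis $\max\{|F_1|,|F_2|\}>\rho$ on the inner boundary $bU$ to partition $bU$ into arcs $\alpha_{i,j}$, indexed by $I_1\cup I_2$ according to which coordinate is already large there. The shell is then covered by small discs $K_{i,j}$ and the complements $\overline{D_{i,j}\setminus K_{i,j}}$ (see \eqref{eq:Kij}). Two successive applications of Theorem~\ref{th:Mergelyan2} are performed \emph{within the same shell}: first fix $z_1$ and add a large translation $\xi_1\in\{z_1=0\}$ on $\bigcup_{(i,j)\in I_1}K_{i,j}$, making $|F_2|$ large there; then fix $z_2$ and add $\xi_2\in\{z_2=0\}$ on $\bigcup_{(i,j)\in I_2}K_{i,j}$, making $|F_1|$ large there. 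On the complementary sets $\overline{D_{i,j}\setminus K_{i,j}}$, the coordinate that was already large (by the inductive hypothesis along $\alpha_{i,j}$ and continuity) is precisely the one held fixed \emph{globally} in the corresponding Mergelyan step, so its lower bound survives exactly --- not merely up to an unknown modulus of continuity. You should replace the labyrinth-density step with this covering-plus-held-coordinate argument, and perform the two fix-and-push operations in every shell rather than alternating them across shells.
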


Properly immersed null curves in $\C^3$ parametrized by any given open Riemann surface were constructed in \cite{AL2}. The methods developed in the present paper allow us to substantially simplify the construction in \cite{AL2} and, what is the main point, to avoid the self-intersections.

The proof of Theorem \ref{th:proper} will be a recursive application of the following approximation result.

\begin{lemma}
\label{lem:proper}
Let $A$ and $M$ be as in Theorem \ref{th:proper}. Let $U$ and $V$ be smoothly bounded compact domains such that $U\subset \mathring V\subset V\subset M$ and $U$ is Runge in $V$. Let $F=(F_1,F_2,\ldots,F_n)\in\mathscr I_A(U)$, let $\rho>0$, and assume that
\begin{equation}\label{eq:lem-proper}
\max\{|F_1(x)|,|F_2(x)|\}>\rho \quad \text{for all\ } x\in bU.
\end{equation}
Then there exists an $\wt F=(\wt F_1,\wt F_2,\ldots,\wt F_n)\in \mathfrak I_A(V)$ such that
\begin{itemize}
\item[\rm (i)] $\wt F$ is as close as desired to $F$ in the ${\mathscr C}^1(U)$ topology,
\item[\rm (ii)] $\max\{|\wt F_1(x)|,|\wt F_2(x)|\}>\rho$ for all $x \in V\setminus \mathring U$, and
\item[\rm (iii)] $\max\{|\wt F_1(x)|,|\wt F_2(x)|\}>\rho+1$ for all $x\in bV$.  
\end{itemize}
\end{lemma}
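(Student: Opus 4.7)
The plan is to apply Theorem~\ref{th:Mergelyan2} twice in succession: first with $z_1$ as the prescribed coordinate to force $|\wt F_1|>\rho+1$ on one portion of $bV$, then with $z_2$ as the prescribed coordinate to handle the complementary portion. The Oka hypotheses on $A\cap\{z_1=1\}$ and $A\cap\{z_2=1\}$, together with the local sections of $\pi_1,\pi_2$ near their zero fibres, make Theorem~\ref{th:Mergelyan2} applicable in either coordinate. Compatibility between the two modifications will be ensured by enlarging the domain of approximation after the first step, so that the second application perturbs the first coordinate only on a region where it has already been driven far from the origin.

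\textbf{Step 1 (setup and partition).} Using Theorem~\ref{th:Mergelyan}(a) and continuity, I would first choose a smoothly bounded compact Runge domain $U_1\subset V$ with $U\subset \mathring U_1$ and $\max\{|F_1|,|F_2|\}>\rho$ on $U_1\setminus \mathring U$. Next, I would decompose $V\setminus \mathring U_1$ as a union $K_1\cup K_2$ of two compact sets whose intersection is a finite collection of pairwise disjoint smooth arcs running from $bU_1$ to $bV$, chosen so that $K_i\cap bU_1\subset \{|F_i|>\rho\}$ and each $U_1\cup K_i$ is an admissible Runge subset of $V$; concretely, the dividing arcs are attached at points of $bU_1$ separating $\{|F_1|>\rho\}$ from $\{|F_2|>\rho\}$, and the $K_i$ are arranged to be topologically simple enough (e.g.\ unions of disc-like pieces) so as not to introduce new cycles. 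Writing $C_i=K_i\cap bV$, one has $bV=C_1\cup C_2$.

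\textbf{Steps 2 and 3 (the two modifications).} For the first modification, classical Mergelyan on $U_1\cup K_1$ produces a nonconstant holomorphic $h_1\colon V\to \C$ with $h_1\approx F_1$ in $\mathscr C^1(U_1)$ and $|h_1|>\rho+2$ on $K_1$. Since $h_1|_{U_1}$ is $\mathscr C^1$-close to $F_1$, a lifting argument---using the trivial $\C^*$-bundle structure of $A\setminus\{z_1=0\}$ over the Oka manifold $A\cap\{z_1=1\}$ together with the given section of $\pi_1$ near $z_1=0$ to handle the zeros of $dF_1/\theta$, and period-correction as in Section~\ref{sec:local}---upgrades this to an $A$-immersion $F^{(1)}=(h_1,F^{(1)}_2,\ldots,F^{(1)}_n)$ on $U_1$ with first coordinate exactly $h_1|_{U_1}$ and $F^{(1)}\approx F$ on $U$. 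Theorem~\ref{th:Mergelyan2} applied with $S=U_1$ and prescribed first coordinate $h_1$ then extends $F^{(1)}$ to an $A$-immersion on (a neighbourhood of) $V$, still written $F^{(1)}$, with $F^{(1)}_1=h_1$ globally; in particular $|F^{(1)}_1|>\rho+2$ on $K_1$. For the second modification, pick a smoothly bounded compact Runge $U_2\subset V$ with $U_1\cup K_1\subset \mathring U_2$ such that $U_2\cup K_2$ remains admissible and Runge; on $U_2\setminus \mathring U$ the inequality $\max\{|F^{(1)}_1|,|F^{(1)}_2|\}>\rho$ holds (on $K_1$ from $|h_1|>\rho+2$, on $U_1\setminus \mathring U$ from Step~1 and $F^{(1)}\approx F$). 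Now repeat the first modification with the roles of $z_1,z_2$ swapped: Mergelyan on $U_2\cup K_2$ supplies $h_2\colon V\to \C$ holomorphic with $h_2\approx F^{(1)}_2$ on $U_2$ and $|h_2|>\rho+1$ on $K_2$, and the analogous lifting surgery for $\pi_2$ followed by Theorem~\ref{th:Mergelyan2} in the second coordinate yields the desired $A$-immersion $\wt F=(\wt F_1,h_2,\wt F_3,\ldots,\wt F_n)$ on $V$ with $\wt F\approx F^{(1)}$ on $U_2$.

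\textbf{Verification and main obstacle.} Property~(i) will follow by chaining $\wt F\approx F^{(1)}\approx F$ on $U\subset U_1\subset U_2$, provided all approximations are close enough. For~(iii): on $C_1\subset K_1\subset U_2$, $\wt F_1$ is $\mathscr C^1$-close to $F^{(1)}_1=h_1$ with $|h_1|>\rho+2$, giving $|\wt F_1|>\rho+1$; on $C_2\subset K_2$, $|\wt F_2|=|h_2|>\rho+1$. Property~(ii) is analogous on $V\setminus \mathring U$, using that $K_1\cup K_2$ covers $V\setminus \mathring U_1$ while $U_1\setminus \mathring U$ is handled by Step~1 and the $\wt F\approx F$ approximation. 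The principal technical obstacle will be the lifting surgery appearing in both modifications: given a holomorphic $h_k\colon V\to\C$ which is $\mathscr C^1$-close to the $k$-th coordinate of the current $A$-immersion on a compact admissible set, produce a nearby $A$-immersion on that set with $k$-th coordinate exactly $h_k$. This is effectively the implicit-function step already present inside Theorem~\ref{th:Mergelyan2}'s proof and depends essentially on the fibre-bundle description of $A\setminus\{z_k=0\}$ over $A\cap\{z_k=1\}$ together with the local section of $\pi_k$ near $z_k=0$. A secondary but also delicate issue is arranging the topological decomposition of Step~1 so that all the sets $U_1\cup K_i$ and $U_2\cup K_2$ are simultaneously admissible in the sense of Def.~\ref{def:admissible} and Runge in $V$.
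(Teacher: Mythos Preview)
Your high-level plan---two successive applications of Theorem~\ref{th:Mergelyan2}, one for each of the first two coordinates---is exactly the paper's strategy. But the paper executes it by a different and substantially simpler device that eliminates what you correctly identify as your principal obstacle.

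You propose to first modify the scalar function $F_1$ via classical Mergelyan to a global $h_1$ with $|h_1|$ large on $K_1$, and then perform a ``lifting surgery'' to turn the $A$-immersion on $U_1$ into one whose first coordinate is exactly $h_1$. The paper never changes the first coordinate at all. Instead, after extending $F$ along radial arcs through the annuli $V\setminus\mathring U$ and approximating (Theorem~\ref{th:Mergelyan}) to get $G$ on $V$, it carves each resulting disc $D_{i,j}$ into an outer collar $\overline{D_{i,j}\setminus K_{i,j}}$ (where one of $|G_1|,|G_2|$ is already $>\rho$) and an inner disc $K_{i,j}$ that is a \emph{separate connected component} of the admissible set $S_1$. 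On those separate components one simply sets $\widehat H=G+\xi_1$ for a constant $\xi_1\in\{z_1=0\}$ with $|\pi_2(\xi_1)|$ large. Translation by a constant preserves both the $A$-immersion condition and the first coordinate, so Theorem~\ref{th:Mergelyan2} with fixed first coordinate applies directly to $\widehat H$ without any lifting. The second pass is symmetric: fix $H_2$ and translate by $\xi_2\in\{z_2=0\}$ on the remaining $K_{i,j}$'s. Thus the whole difficulty you flag dissolves once the pieces on which one needs large values are arranged to be \emph{disjoint} from the domain where the current immersion must be approximated.

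Two further points. First, your transition regions need more care: with $K_1,K_2$ attached to $U_1$ along arcs, the Mergelyan data for $h_k$ on $U_\ell\cup K_k$ must agree on the overlap, so $|h_k|$ cannot be forced large right up to $bU_\ell$; you would need to track a collar where only the \emph{other} coordinate is known to exceed $\rho$, which is precisely what the paper's $\overline{D_{i,j}\setminus K_{i,j}}$ accomplishes. Second, you do not treat the critical case where $V\setminus\mathring U$ carries a change of topology; the paper handles it by attaching an arc across the new handle, extending $F$ along it so that $\max\{|F_1|,|F_2|\}>\rho$ there, and then reducing to the noncritical case.
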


%%%%%%%%%%

\begin{proof}
The conditions on $U$ and $V$ imply that these are sublevel sets of a strongly subharmonic Morse function $\tau$ defined on a neighborhood of $V$ in $M$. As in the proof of Theorem \ref{th:Mergelyan}, we obtain Lemma \ref{lem:proper} by a finite application of two special cases:  the {\em noncritical case} when there is no change of topology (i.e., $\tau$ has no critical points in $V\setminus U$), and the {\em critical case} when $\tau$ has a single critical point in $V\setminus U$.

\smallskip
\noindent {\it The noncritical case.} By Theorem \ref{th:Mergelyan} we may assume that $F$ extends to an $A$-immersion of an open neighborhood of $U$ in $M$. Denote by $\igoth$ the number of boundary components of $U$. Our conditions imply that
$%\[
	V\setminus \mathring  U =\bigcup_{i=1}^\igoth \cA_i,
$%\]
 where the $\cA_i$'s are pairwise disjoint compact annuli. For every $i\in\{1,\ldots,\igoth\}$ we denote by $\alpha_i$ the connected component of $b\cA_i$ contained in $bU$, and by $\beta_i$ the connected component of $b\cA_i$ contained in $bV$. Note that $\alpha_i$ and $\beta_i$ are smooth closed Jordan curves. 
 
%We split the boundary $bU=\bigcup_{i=1}^\igoth \alpha_i$ into finitely many compact Jordan arcs. 
 
It follows from \eqref{eq:lem-proper} that there exist $\jgoth\in\N$, subsets $I_1$ and $I_2$ of $I:=\{1,\ldots,\igoth\}\times \Z_\jgoth$ (here $\Z_\jgoth=\{0,\ldots,\jgoth-1\}$ denotes the additive cyclic group of integers modulus $\jgoth$), and a family of compact connected subarcs $\{\alpha_{i,j}\colon (i,j)\in I\}$, satisfying the following conditions:

\begin{itemize}
\item[\rm (a1)] $\bigcup_{j=1}^\jgoth \alpha_{i,j}=\alpha_i$.

\item[\rm (a2)] $\alpha_{i,j}$ and $\alpha_{i,j+1}$ have a common endpoint $p_{i,j}$ and are otherwise disjoint.

\item[\rm (a3)] $I_1\cup I_2=I$ and $I_1\cap I_2=\emptyset.$

\item[\rm (a4)] $|F_k(x)|>\rho$ for all $x\in\alpha_{i,j}$ and all $(i,j)\in I_k$, $k=1,2$.
\end{itemize}
From {\rm (a$4$)} one also has that
\begin{itemize}
\item[\rm (a$5$)] if $(i,j)\in I_h$ and $(i,j+1)\in I_l$, $h\neq l$, then $|F_k(p_{i,j})|>\rho$ for  $k\in\{1,2\}$.
\end{itemize}

For convenience we assume that $\jgoth\geq 3$. Next, for every $(i,j)\in I$ we choose a smooth embedded arc $\gamma_{i,j}\subset \cA_i$ with the following properties (see Fig.\ \ref{fig:S}):
\begin{itemize}
\item $\gamma_{i,j}$ is attached to $U$ at the endpoint $p_{i,j}$, it intersects the arc $\alpha_i$ transversely at that point, and $\gamma_{i,j}\cap \alpha_i=\{p_{i,j}\}$.

\item The other endpoint $q_{i,j}$ of the arc $\gamma_{i,j}$ lies in $\beta_i$, $\gamma_{i,j}$ intersects $\beta_i$ transversely  at that point, and $\gamma_{i,j}\cap \beta_i=\{q_{i,j}\}$.

\item The arcs $\gamma_{i,j}$, $j\in\Z_\jgoth$, are pairwise disjoint.
\end{itemize}

\begin{figure}[ht]
    \begin{center}
    \scalebox{0.25}{\includegraphics{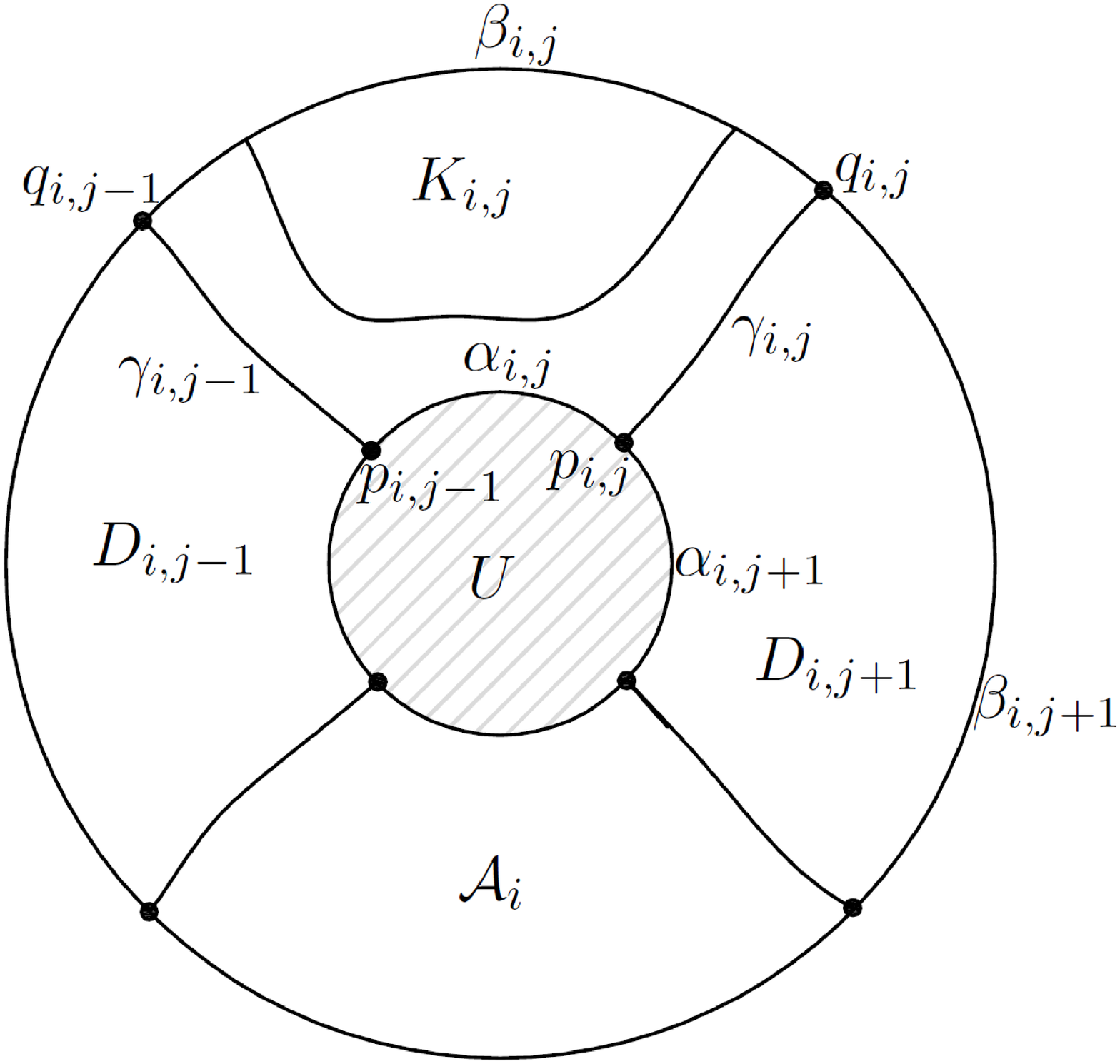}}
        \end{center}
        \vspace{-0.25cm}
\caption{The annulus $\cA_i$.}\label{fig:S}
\end{figure} 

Let $z=(z_1,\ldots,z_n)$ be the coordinates on $\C^n$. Recall that $\pi_k\colon \C^n\to \C$ is the $k$-th coordinate projection $\pi_k(z)=z_k$. Choose compact smooth embedded arcs $\lambda_{i,j}$ in $\C^n$, $(i,j)\in I$, meeting the following requirements:
\begin{itemize}
\item[\rm (b1)] $\lambda_{i,j}$ agrees with $F(\gamma_{i,j})$ near the endpoint $F(p_{i,j})$.
% ; recall that $F$ extends as an $A$-immersion to a neighborhood of $U$.

\item[\rm (b2)] $|\pi_k(z)|>\rho$ for all $z \in \lambda_{i,j-1}\cup \lambda_{i,j}$, $(i,j)\in I_k$, $k=1,2$.

\item[\rm (b3)] $|\pi_k(z)|>\rho+1$ for all $z \in\{v_{i,j-1},v_{i,j}\}$, $(i,j)\in I_k$, $k=1,2$, where $v_{i,l}\in\C^n$ denotes the other endpoint of the arc $\lambda_{i,l}$.

\item[\rm (b4)] The unit tangent vector field to $\lambda_{i,j}$ assumes values in $A\setminus\{0\}$.
\end{itemize}

To find such arcs $\lambda_{i,j}$, recall that the convex hull of $A\setminus\{0\}$ equals $\C^n$ (cf.\ Lemma \ref{lem:convexhull}) and use similar arguments as in the proof of Lemma \ref{lem:convex}. Since $p_{i,j-1}$ and $p_{i,j}$ are the endpoints of $\alpha_{i,j}$, properties {\rm (b1)} and {\rm (b2)} are compatible thanks to {\rm (a4)}. On the other hand, {\rm (b3)} is always possible.

As above, conditions {\rm (b$2$)} and {\rm (b3)} imply that,
\begin{itemize}
\item[\rm (b$5$)] if $(i,j)\in I_h$ and $(i,j+1)\in I_l$, $h\neq l$, then
\begin{itemize}
\item[$\bullet$] $|\pi_k(z)|>\rho$ for all $z\in\lambda_{i,j}$, and
\item[$\bullet$] $|\pi_k(v_{i,j})|>\rho+1$ for $k\in\{1,2\}$.
\end{itemize}
\end{itemize}

Taking into account {\rm (b1)}, we can find a smooth map $\widehat G\colon V \to\C^n$ that agrees with $F$ in an open neighborhood of $U$ and maps the arc $\gamma_{i,j}$ diffeomorphically onto $\lambda_{i,j}$ for every $(i,j)\in I$. The set 
\[
		S:=U\cup \big(\bigcup_{(i,j)\in I} \gamma_{i,j}\big) \subset M
\]
is admissible in the sense of Def.\ \ref{def:admissible}. Condition {\rm (b4)} shows that the map $\widehat G|_S$ is an $A$-immersion in the sense used in Theorem \ref{th:Mergelyan}. Hence Theorem \ref{th:Mergelyan}-(b) furnishes an $A$-immersion $G=(G_1,\ldots,G_n)\in\mathscr I_A(V)$ such that
\begin{itemize}
\item[\rm (c1)] $G$ is as close as desired to $F$ in the ${\mathscr C}^1(U)$ topology,

\item[\rm (c2)] $|G_k(x)|>\rho$ for all $x\in\gamma_{i,j-1}\cup\alpha_{i,j}\cup \gamma_{i,j}$, $(i,j)\in I_k$, $k=1,2$, and

\item[\rm (c3)] $|G_k(x)|>\rho+1$ for all $x\in\{q_{i,j-1},q_{i,j}\}$, $(i,j)\in I_k$, and $k=1,2$.
\end{itemize}

To obtain {\rm (c2)} we take into account {\rm (a4)} and {\rm (b2)}, whereas {\rm (c3)} follows from {\rm (b3)}. Furthermore, {\rm (c$2$)} and {\rm (c3)} give
\begin{itemize}
\item[\rm (c$4$)] if $(i,j)\in I_h$ and $(i,j+1)\in I_l$, $h\neq l$, then for $k\in\{1,2\}$ we have 
\begin{itemize}
\item[$\bullet$] $|G_k(x)|>\rho$ for all $x \in\gamma_{i,j}$, and
\item[$\bullet$] $|G_k(q_{i,j})|>\rho+1$.
\end{itemize}
\end{itemize}

Notice that $G$ satisfies condition {\rm (i)} in Lemma \ref{lem:proper}, whereas it satisfies {\rm (ii)} only on the arcs $\gamma_{i,j}$, and {\rm (iii)} only at the points $q_{i,j}$, $(i,j)\in I$. 
% REMARK: I changed v_{i,j} to q_{i.j} in the above line, and several other places below. 
Therefore, $G$ meets all the requirements in Lemma \ref{lem:proper} on the admissible set $S$. 
%
%In the sequel we modify $G$ %hardly on $S$ and largely on $V\setminus S$, in order to find the desired $A$-immersion $\wt F$ which approximates $G$ on $S$ and satisfies the lemma.

Let $\beta_{i,j}$ be the compact connected Jordan arc in $\beta_i$ which connects the points %$v_{i,j-1}$ and $v_{i,j}$, 
$q_{i,j-1}$ and $q_{i,j}$ and does not intersect the set $\{q_{i,h}\colon h\in \Z_\jgoth\setminus \{j-1,j\}\}$; recall that $\jgoth\geq 3$. For every $(i,j)\in I$ we denote by $D_{i,j}$ the closed disc in $\cA_i$ bounded by the arcs $\alpha_{i,j}$, $\gamma_{i,j-1}$, $\gamma_{i,j}$, and $\beta_{i,j}$. (See Fig.\ \ref{fig:S}.) It is clear that $\cA_i=\bigcup_{j=1}^\jgoth D_{i,j}$ for every $i=1,\ldots,\igoth$. 

Since $G$ is continuous, properties {\rm (c2)} and {\rm (c3)} extend to small open neighborhoods of the compact sets $\gamma_{i,j-1}\cup \alpha_{i,j}\cup \gamma_{i,j}$ and $\{q_{i,j-1},q_{i,j}\}$, respectively. Therefore, we can choose for every $k\in\{1,2\}$ and every $(i,j)\in I_k$ a closed disc $K_{i,j}\subset D_{i,j}\setminus(\gamma_{i,j-1}\cup \alpha_{i,j}\cup \gamma_{i,j})$ such that
\begin{itemize}
\item[\rm (d1)] $K_{i,j}\cap \beta_{i,j}$ is a compact connected Jordan arc,
\item[\rm (d2)] $|G_k(x)|>\rho$ for all $x\in \overline{D_{i,j}\setminus K_{i,j}}$, and
\item[\rm (d3)] $|G_k(x)|>\rho+1$ for all $x\in \overline{\beta_{i,j}\setminus K_{i,j}}$.
\end{itemize}
(See Fig.\ \ref{fig:S}.) Obviously we have   
\begin{equation}\label{eq:Kij}
V\setminus \mathring U=\bigcup_{(i,j)\in I} \big( K_{i,j} \cup \overline{D_{i,j}\setminus K_{i,j}}\, \big)
\end{equation}
and
\begin{equation}\label{eq:betaij}
bV=\bigcup_{(i,j)\in I} \big((\beta_{i,j}\cap K_{i,j})\cup \overline{\beta_{i,j}\setminus K_{i,j}} \,\big).
\end{equation}
Assume without loss of generality that $I_1\neq \emptyset$; otherwise $I_2=I\neq\emptyset$ and we would reason in a symmetric way.

We now deform $G$ into another $A$-immersion $H\colon V\to\C^n$ satisfying Lemma \ref{lem:proper} on the set $U\cup \big(\bigcup_{(i,j)\in I_1} D_{i,j}\big)$. We will apply to $G$ a perturbation that is large in $K_{i,j}$ for all $(i,j)\in I_1$, small on $U$, and controlled elsewhere. Such control will be insured by demanding that $H_1=G_1$; see (\ref{eq:H1=G1}) below.

Observe that the compact set 
\[
	S_1:=
	U\cup \bigcup_{(i,j)\in I_2} D_{i,j}\,\,\, \cup \bigcup_{(i,j)\in I_1} K_{i,j}	\subset M
\]
is admissible (Def.\ \ref{def:admissible}). Note that the compact sets $U\cup \big(\bigcup_{(i,j)\in I_2} D_{i,j}\big)$ and $\bigcup_{(i,j)\in I_1} K_{i,j}$ are disjoint. Choose a point $\xi_1\in\C^n\cap\{z_1=0\}$ such that
\begin{equation}\label{eq:xi1}
|\pi_2(\xi_1)+G_2(x)|>\rho+1\quad \text{for all }x\in \bigcup_{(i,j)\in I_1}K_{i,j}.
\end{equation}
In fact, any $\xi_1$ with $|\pi_2(\xi_1)|$ large enough satisfies this condition. The map $\widehat H=(\widehat H_1,\ldots,\widehat H_n)\colon S_1\to\C^n$, given by
\begin{itemize}
\item[\rm (e1)] $\widehat H(x)=G(x)$ for all $x\in U\cup \big(\bigcup_{(i,j)\in I_2} D_{i,j}\big)$, and
\item[\rm (e2)] $\widehat H(x)=\xi_1+G(x)=(G_1(x),G_2(x)+\pi_2(\xi_1),\ldots,G_n(x)+\pi_n(\xi_1))$ for all $x\in \bigcup_{(i,j)\in I_1}K_{i,j}$,
\end{itemize}
is an $A$-immersion, and $\widehat H_1$ agrees with $G_1|_{S_1}$.% since $\pi_1(\xi_1)=0$. 
Therefore Theorem \ref{th:Mergelyan2} applies and provides an $A$-immersion $H=(H_1,\ldots,H_n)\in \mathscr I_A(V)$ such that 
\begin{equation}\label{eq:H1=G1}
H_1=G_1
\end{equation}
and the following properties hold:
\begin{itemize}
\item[\rm (f1)] $H$ is as close to $\wh H$ as desired in the ${\mathscr C}^1(S_1)$ topology.
\item[\rm (f2)] $|H_k(x)|>\rho$ for all $x\in\overline{D_{i,j}\setminus K_{i,j}}$, $(i,j)\in I_k$, $k=1,2$.
\item[\rm (f3)] $|H_k(x)|>\rho+1$ for all $x\in \overline{\beta_{i,j}\setminus K_{i,j}}$, $(i,j)\in I_k$, $k=1,2$.
\item[\rm (f4)] $|H_2(x)|>\rho+1$ for all $x\in K_{i,j}$ and all $(i,j)\in I_1=I\setminus I_2$.
\end{itemize}

Indeed, for $k=1$, properties {\rm (f2)} and {\rm (f3)} follow from {\rm (d2)}, {\rm (d3)}, and \eqref{eq:H1=G1}; whereas for $k=2$ they are guaranteed by {\rm (d2)}, {\rm (d3)}, and {\rm (e1)}, provided that $H$ is sufficiently close to $\widehat H$ on $S_1$. Likewise, {\rm (f4)} is implied by {\rm (e2)} and \eqref{eq:xi1} provided that the approximation is sufficiently close.

If $I_2=\emptyset$, then the proof of Lemma \ref{lem:proper} is already done. Indeed, in such case $I=I_1$; hence, taking into account \eqref{eq:Kij}, properties {\rm (f2)} and {\rm (f4)} imply item {\rm (ii)} in the lemma. Likewise,
{\rm (iii)} follows from {\rm (f3)}, {\rm (f4)}, and \eqref{eq:betaij}. Finally, item {\rm (i)} is insured by {\rm (f1)} and {\rm (c1)}.

Assume now that $I_2\neq \emptyset$. In the next step we deform $H$ to obtain the $A$-immersion $\wt F\in \mathscr I_A(V)$ satisfying Lemma \ref{lem:proper}. The deformation is big on $\bigcup_{(i,j)\in I_2} D_{i,j}$ and small elsewhere. The deformation procedure will be symmetric to the one in the previous step of the proof. Consider the set
\[
S_2:= U\cup \bigcup_{(i,j)\in I_1} D_{i,j}  \,\,\,\cup \bigcup_{(i,j)\in I_2} K_{i,j},
\]
and choose a point $\xi_2\in\C^n\cap\{z_2=0\}$ such that
\begin{equation}\label{eq:xi2}
|\pi_1(\xi_2)+H_1(x)|>\rho+1\quad \text{for all }x\in \bigcup_{(i,j)\in I_2}K_{i,j}.
\end{equation}
Consider the $A$-immersion $Y=(Y_1,\ldots,Y_n)\colon S_2\to\C^n$ given by
\begin{itemize}
\item[\rm (g1)] $Y(x)=H(x)$ for all $x\in U\cup \big(\bigcup_{(i,j)\in I_1} D_{i,j}\big)$, and
\item[\rm (g2)] $Y(x)=\xi_2+H(x)=(H_1(x)+\pi_1(\xi_2),H_2(x),\ldots,H_n(x)+\pi_n(\xi_2))$ for all $x\in \bigcup_{(i,j)\in I_2}K_{i,j}$.
\end{itemize}
Apply Theorem \ref{th:Mergelyan2} to get $\wt F=(\wt F_1,\ldots,\wt F_n)\in \mathscr I_A(V)$, with
\begin{equation}\label{eq:F2=H2}
\wt F_2=H_2
\end{equation}
and such that the following conditions hold:
\begin{itemize}
\item[\rm (h1)] $\wt F$ is as close to $Y$ as desired in the ${\mathscr C}^1(S_2)$ topology.
\item[\rm (h2)] $|\wt F_k(x)|>\rho$ for all $x\in\overline{D_{i,j}\setminus K_{i,j}}$, for all $(i,j)\in I_k$, $k=1,2$.
\item[\rm (h3)] $|\wt F_k(x)|>\rho+1$ for all $x\in \overline{\beta_{i,j}\setminus K_{i,j}}$, for all $(i,j)\in I_k$, $k=1,2$.
\item[\rm (h4)] $|\wt F_k(x)|>\rho+1$ for all $x\in K_{i,j}$, for all $(i,j)\in I\setminus I_k$, $k=1,2$.
\end{itemize}

Indeed, for $k=2$, properties {\rm (h2)} and {\rm (h3)} follow from {\rm (f2)}, {\rm (f3)}, and \eqref{eq:F2=H2}; whereas for $k=1$ they are insured by {\rm (f2)}, {\rm (f3)}, and {\rm (h1)}, provided that the approximation of $Y$ by $\wt F$ is close enough on the set $S_2$. Likewise, for $k=2$, {\rm (h4)} is implied by {\rm (f4)} and \eqref{eq:F2=H2}; whereas for $k=1$ it follows from {\rm (g2)} and \eqref{eq:xi2}, if the approximation is sufficiently close.

To see that $\wt F$ satisfies Lemma \ref{lem:proper}, notice that {\rm (i)} is insured by {\rm (h1)}, {\rm (f1)}, and {\rm (c1)}; condition {\rm (ii)} follows from \eqref{eq:Kij}, {\rm (h2)}, and {\rm (h4)}; whereas \eqref{eq:betaij}, {\rm (h3)}, and {\rm (h4)} guarantee the condition {\rm (iii)}. 

This concludes the proof in the noncritical case.

\smallskip
\noindent {\it The critical  case.} We assume that $\tau$ has a unique (Morse) critical point $p$ in $V\setminus U$. This point has Morse index 0 or 1. 

If the Morse index of $p$ equals 0, a new connected component of the sublevel set $\{\tau\le t\}$ appears at $p$ when $t$ passes the value $\tau(p)$, and it is trivial to find a map $\wt F$ satisfying the Lemma on this new component.

If the Morse index of $p$ equals 1, there is a compact Jordan arc $\gamma \subset \mathring V\setminus \mathring U$,  attached with both endpoints to $U$, such that $S:=U\cup\gamma$ is an admissible Runge set in $V$  (Def.\ \ref{def:admissible}) and a strong deformation retract of $V$. Clearly $F$ extends to an $A$-immersion $F\colon S\to\C^n$ satisfying $\max\{|F_1(x)|,|F_2(x)|\} > \rho$ for all $x\in\gamma$; see \eqref{eq:lem-proper}. Theorem \ref{th:Mergelyan}-{\rm (a)} furnishes a smoothly bounded compact domain, $W$, and an $A$-immersion $G=(G_1,\ldots,G_n)\in\mathscr I_A(W)$, such that $U\subset\mathring W\subset W\subset \mathring V$, 
$V$ is a noncritical extension of $W$, $G$ is as close as desired to $F$ in the ${\mathscr C}^1(S)$ topology, and 
$\max\{|G_1(x)|,|G_2(x)|\}>\rho$ for $x\in bW$. This reduces the proof to the noncritical case. 
\end{proof}

\smallskip
\noindent {\it Proof of Theorem \ref{th:proper}.} 
Let $F=(F_1,F_2,\ldots,F_n)\colon M_0\to\C^n$ be an $A$-immersion, where $M_0$ is a smoothly bounded compact Runge domain in $M$ containing $K$ in its interior. By general position we may assume that the set $(F_1,F_2)(bM_0)\subset\C^2$ does not contain the origin of $\C^2$. Choose a number $\xi>0$ such that 
\begin{equation}\label{eq:basis}
\max\{|F_1(x)|,|F_2(x)|\}>\xi\quad \text{for all $x\in bM_0$}.
\end{equation}
Pick a number $\epsilon>0$ and set $F^0:=F$ and $\epsilon_0:=\epsilon$. Choose an exhaustion of $M$ by a sequence $M_0\subset M_1\subset M_2 \subset \cdots \bigcup_{j=0}^\infty M_j= M$ of smoothly bounded compact Runge domains. 
A recursive application of Lemma \ref{lem:proper} gives numbers $\epsilon_j>0$ and $A$-immer\-sions $F^j=(F^j_1,F^j_2,\ldots,F^j_n)\colon M_j\to\C^n$ such that the following conditions hold for every $j\in\N$:
\begin{itemize}
\item[\rm (a)] $\|F^j-F^{j-1}\|_{M_{j-1}}:=\max_{x\in M_{j-1}}|F^j(x)-F^{j-1}(x)|<\epsilon_{j-1}/2$,
\item[\rm (b)] $\max\{|F^j_1(x)|,|F^j_2(x)|\}>j-1$ for all $x \in M_j\setminus \mathring M_{j-1}$, 
\item[\rm (c)] $\max\{|F^j_1(x)|,|F^j_2(x)|\}>j$ for all $x\in bM_j$, and
\item[\rm (d)] $0<\epsilon_j<\epsilon_{j-1}/4$,
\end{itemize}
The induction begins thanks to \eqref{eq:basis}, and the inductive step is guaranteed by property {\rm (c)}. Furthermore, by Theorem \ref{th:desing} we may assume that each $F^j$ is actually an $A$-embedding and
\begin{itemize}
\item[\rm (e)] every holomorphic map $G\colon M\to\C^n$ satisfying $\|G-F^j\|_{M_j}<\epsilon_j$ is an embedding on $M_{j-1}$.
\end{itemize}
As in the proof of Theorem \ref{th:desing2}, properties {\rm (a)}, {\rm (d)}, and {\rm (e)} ensure that the limit map $\wt F=(\wt F_1,\wt F_2,\ldots,\wt F_n)=\lim_{j\to\infty} F^j\colon M\to \C^n$ exists, is an $A$-embedding, and satisfies 
$\|\wt F-F^j\|_{M_j}<\epsilon$ for all $j=0,1,...$. 
In particular we have $\|\wt F-F\|_{M_0}<\epsilon$. Together with property {\rm (b)} we get
\[
	\max\{|\wt F_1(x)|,|\wt F_2(x)|\}>j-1-\epsilon \quad 
	\text{for all } x\in M_j\setminus \mathring M_{j-1},\ j\in \N.
\]
This shows that the map $(\wt F_1,\wt F_2)\colon M\to\C^2$ is proper.
\qed

%%%%%%%%%%

\medskip
{\bf Acknowledgements.} 
\rm A.\ Alarc\'{o}n is supported by Vicerrectorado de Pol\'{i}tica Cient\'{i}fica e Investigaci\'{o}n de la Universidad de Granada, and is partially supported by MCYT-FEDER grants MTM2007-61775 and MTM2011-22547, Junta de Andaluc\'{i}a Grant P09-FQM-5088, and the grant PYR-2012-3 CEI BioTIC GENIL (CEB09-0010) of the MICINN CEI Program. 

F.\ Forstneri\v c is supported by the research program P1-0291 from ARRS, Republic of Slovenia.

We wish to thank the anonymous referee for the useful remarks which helped us to improve the presentation.

%%%%%%%%%%%%%%%%%%%%%%%%%%%%%%%%%%%%%%%%%%%%%%%%%%%%%
%																				            %
%  REFERENCES                                       %
%																				            %
%%%%%%%%%%%%%%%%%%%%%%%%%%%%%%%%%%%%%%%%%%%%%%%%%%%%%

\bibliographystyle{amsplain}

\end{document}